\documentclass[a4paper,12pt]{article}
\usepackage{graphicx} 

\usepackage{amsmath,amssymb,amsfonts,amsthm}
\usepackage{multirow,bigdelim}
\usepackage{amscd}
\usepackage{verbatim}
\usepackage{latexsym}
\usepackage{array}
\usepackage{enumerate}
\usepackage[all]{xy}
\usepackage{graphicx}
\usepackage{ulem}
\usepackage{ascmac}
\usepackage{mathtools}
\usepackage{layout}
\usepackage{bbm,enumitem,empheq,nicematrix,relsize,calligra}
\usepackage[initials,alphabetic]{amsrefs}
   \BibSpec{arXiv}{
   +{}{\PrintAuthors}{author}
   +{,}{ \textit}{title}
   +{,}{ }{date}
   +{,}{ arXiv:}{eprint}}
\usepackage{mleftright}
   \mleftright
\usepackage{tikz}
\usetikzlibrary{cd,decorations,decorations.pathreplacing,arrows,calc,intersections,hobby,decorations.pathmorphing} 
\usepackage{adjustbox}
\usepackage{extpfeil}
\usepackage[T5]{fontenc}

\numberwithin{equation}{section}

\newcommand{\BDC}{{\mathbf{D}}^{\mathrm{b}}}

\newcommand{\Mod}{\mathrm{Mod}}

\newcommand{\rsect}{{\mathrm{R}}\Gamma}
\newcommand{\CC}{\mathbb{C}}

\newcommand{\RR}{\mathbb{R}}

\newcommand{\ZZ}{\mathbb{Z}}

\newcommand{\codim}{{\rm codim}}

\newcommand{\e}{\varepsilon}

\newcommand{\Perv}{{\rm Perv}}
\newcommand{\Sol}{{\rm Sol}}

\newcommand{\simto}{\overset{\sim}{\longrightarrow}}

\newcommand{\CF}{{\rm CF}}

\newcommand{\op}{\mbox{\scriptsize op}}
\newcommand{\SD}{\mathcal{D}}

\newcommand{\SO}{\mathcal{O}}

\newcommand{\SM}{\mathcal{M}}

\newcommand{\SF}{\mathcal{F}}

\newcommand{\Modhol}{\mathrm{Mod}_{\mathrm{hol}}}
\newcommand{\Modrh}{\mathrm{Mod}_{\mbox{\scriptsize rh}}}

\newcommand{\rhom}{{\rm R}{\mathcal{H}}om}

\newcommand{\ch}{\rm char}

\renewcommand{\ch}{{\mathrm{char}}}

\DeclareMathOperator{\supp}{supp}

\DeclareMathOperator{\mult}{mult}
\DeclareMathOperator{\CCyc}{CC} 
\DeclareMathOperator{\msupp}{SS} 

\DeclareRobustCommand{\longtwoheadrightarrow}{\relbar\joinrel\twoheadrightarrow} 
\DeclareRobustCommand{\longtwoheadleftarrow}{\twoheadleftarrow\joinrel\relbar}

\newcommand{\longhookrightarrow}{\lhook\joinrel\longrightarrow}

\DeclarePairedDelimiter{\abs}{\lvert}{\rvert} 
 
\DeclarePairedDelimiterX{\Set}[2]{\lbrace}{\rbrace}{#1\ \delimsize\vert\ #2}

\newtheorem{theorem}{Theorem}[section]

\newtheorem{corollary}[theorem]{Corollary}
\newtheorem{lemma}[theorem]{Lemma}
\newtheorem{proposition}[theorem]{Proposition}

\theoremstyle{definition}
\newtheorem{definition}[theorem]{Definition}
\theoremstyle{remark}
\newtheorem{remark}[theorem]{\sc Remark}
\theoremstyle{remark}
\newtheorem{example}[theorem]{\sc Example}

\title{Euler obstructions and Verdier specializations
\footnote{{\bf 2020 Mathematics Subject Classification:
}32C38, 32S40, 32S60, 35A27.}
}

\author{Ren FERNANDES 
\footnote{Mathematical Institute, Tohoku University,
Aramaki Aza-Aoba 6-3, Aobaku, Sendai, 980-8578, Japan.
E-mail: fernandes.ren.p7@dc.tohoku.ac.jp}
and Kiyoshi TAKEUCHI 
\footnote{Mathematical Institute, Tohoku University,
Aramaki Aza-Aoba 6-3, Aobaku, Sendai, 980-8578, Japan.
E-mail: takemicro@nifty.com} }

\begin{document}
\maketitle
\begin{abstract}
To establish a basis of the sheaf theoretical study of the
Milnor fibers and monodromies of complete
intersection varieties, we clarify the structures
of the Verdier specialization sheaves associated
to them. Assuming the Thom condition in a direction,
some basic results on them will be obtained.
Then we apply them to obtain formulas for the
Euler obstructions of complete intersection varieties
having non-isolated singular points.
\end{abstract}

\section{Introduction}
The aim of this paper is twofold.
First, we study of the structures of the Verdier specialization sheaves 
associated to complete intersection varieties, for which almost nothing 
has been known until now. 
In particular, assuming the Thom condition for them, 
a basic relation between their Milnor fibers and the stalks of the 
Verdier specializations will be obtained.
Moreover, based on the result of \cite{FKT26} we will describe the 
characteristic cycles of the Verdier specializations in terms of some Milnor fibers. 
We thus establish a solid basis for the sheaf theoretical study of the 
Milnor fibers and monodromies of complete intersection varieties. 
Our second aim is to apply these results to obtain formulas for the 
Euler obstructions of C.I. varieties with possibly non-isolated singular points. 
Recall that Euler obstructions of complex varieties and analytic sets 
were introduced by Kashiwara \cite{Kas73}, \cite{Kas83a} and 
MacPherson \cite{Mac74} independently, more than fifty years ago. 
Euler obstructions are of exceptional importance in various fields of 
mathematics, such as $\SD$-module theory, topology, singularity theory, 
algebraic geometry and combinatorics. 
However, despite a lot of effort by many mathematicians, even until now 
there has been no universal way to describe them in general. 
An important partial result in \cite{LT81} 
reduces the calculation of the value 
of the Euler obstruction of a variety 
at a point to those of its polar multiplicities there. 
For varieties with non-isolated singular points, 
formulas of their Euler obstructions are known only for very special classes of 
varieties such as toric and determinantal varieties (see e.g. 
\cite{MT11}, \cite{GGR19} and \cite{LR22}). 
For excellent surveys on Euler obstructions, 
see \cite{BGB22} and \cite{BSS09} etc. 
Note that Euler obstructions are important also in the study of 
projective duality (see e.g. \cite{Ern94}, \cite{MT08}, \cite{MT11}). 
We hope that our formulas for C.I. varieties will shed some light 
in the further study of Euler obstructions. 

\medskip \indent 
In order to explain our results more precisely, let us 
prepare some notations and definitions. 
Here we present our results only in the algebraic situation. 
As is clear from our proof, similar results hold true also 
for complex analytic C.I. varieties. Moreover, the 
case of complex hypersurfaces being a very special one 
of C.I. varieties in which some of our assumptions are always 
satisfied, the results for complex hypersurfaces are 
obtained much more easily (see Section \ref{sec-hypersurface}). 
Let $Y=\{f_1=\cdots =f_d=0\} \subset X=\CC^n \quad (f_i(x)\in \CC[x_1,\ldots x_n])$
be a complete intersection subvariety of $X=\CC^n$ such that $0\in Y$ and 
\begin{equation}
i_f\colon X\longhookrightarrow X\times \CC_s^d \quad (x\longmapsto (x,f_1(x),\ldots , f_d(x)))
\end{equation}
the graph embedding of $f=(f_1,\ldots , f_d) \colon X\longrightarrow \CC_s^d$. 
Note that for the normal bundle $T_{X\times \{0\}}(X\times \CC^d)$ of 
$X\times \{0\} \subset X\times \CC^d$ in $X\times \CC^d$ 
we have a natural isomorphism 
\begin{equation}
T_{X\times \{0\}}(X\times \CC^d) \simeq X\times \CC^d
\end{equation}
(see e.g. \cite[Appendix B.6]{Ful98}). 
Then to a constructible sheaf $F\in \BDC_c(X)$ on $X=\CC^n$ the 
Verdier specialization functor introduced in \cite{Ver83} associates a one 
$\mathrm{Sp}_{X\times \{0\}|X\times \CC^d}({i_f}_\ast F) \in \BDC_c(X\times \CC^d)$ 
on $T_{X\times \{0\}}(X\times \CC^d) \simeq X\times \CC^d$. 
Since this operation preserves the perversity, 
for $F=\CC_X[n] \in \Perv(\CC_X)$ we obtain a perverse sheaf 
\begin{equation}
\mathcal{G} \coloneq \mathrm{Sp}_{X\times \{0\}| X\times \CC^d}({i_f}_\ast 
\CC_X[n]) \quad \in \Perv(\CC_{X\times \CC^d})
\end{equation}
on $T_{X\times \{0\}}(X\times \CC^d) \simeq X\times \CC^d$. 
Just like Deligne's nearby cycle sheaves introduced in \cite{Del73} 
are related to the Milnor fibers and 
monodromies of holomorphic functions, 
this Verdier specialization sheaf $\mathcal{G} \in \Perv(\CC_{X \times \CC^d})$ 
should be related to those of the morphism $f=(f_1,\ldots , f_d)\colon X\longrightarrow \CC^d$. 
Indeed, without establishing such a relation, several authors already used 
the Verdier specialization to study the higher codimensional analogue of 
Milnor's theory in \cite{Mil68} (see e.g. \cite{Bud13} and \cite{VPV10}). 
To have such a relation, in fact we need Thom's condition 
(see Definition \ref{def-thom}, \cite{Sea06} and \cite{Sea19}) as we will see below. 
Let $\Delta_f \subset \CC_s^d$ be the discriminant set of $f$ and 
$C_{\{0\}}(\Delta_f) \subset T_{\{0\}}\CC^d \simeq \CC^d$ its cone along the origin $\{0\}\subset \CC^d$. 
If $f\colon X\longrightarrow \CC^d$ satisfies the Thom condition in a direction 
$a \in \CC^d \setminus C_{\{0\}}(\Delta_f)$, 
we can show that its Milnor fiber at the origin $0\in Y\subset X=\CC^n$ 
(in the direction $a$) 
exists and we denote it by $M_{Y,0}\subset X\setminus Y$. 
Then we obtain the following result. 

\begin{proposition}\label{prop-milnor-fiber}
Assume that $f=(f_1,\ldots ,f_d)\colon X\longrightarrow \CC_s^d$ satisfies 
the Thom condition in a direction $a\in \CC^d \setminus C_{\{0\}}(\Delta_f)$. 
Then there exist isomorphisms 
\begin{align}
H^j\mathcal{G}_{(0,a)} &\simeq H^j \mathrm{Sp}_{X\times \{0\} |X\times \CC^d}({i_f}_\ast \CC_X[n])_{(0,a)}\\
&\simeq H^{j+n}(M_{Y,0};\CC) \quad (j\in \ZZ).
\end{align}
\end{proposition}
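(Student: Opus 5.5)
We compute the stalk of the Verdier specialization through its standard description as a limit over conic neighbourhoods. Write $Z=X\times\{0\}\subset \widetilde X := X\times\CC^d$ and $G:={i_f}_\ast\CC_X[n]$.

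\textbf{Step 1: the stalk formula.} For a point $(0,a)$ with $a\neq 0$ in the normal bundle $T_Z\widetilde X\simeq X\times\CC^d$, Verdier's construction gives
\[
H^j\mathcal G_{(0,a)}\simeq \varinjlim_{U} H^j\bigl(U;G\bigr).
\]
Here $U$ ranges over open subsets of $\widetilde X\setminus Z$ with $C_Z(\widetilde X\setminus U)\cap\{(0,a)\}$ empty. Concretely, cofinal such $U$ are the sets
\[
U_{\e,\delta,\eta}=\bigl\{(x,s)\ \big|\ \|x\|<\e,\ s\in \CC^\ast\cdot V_\eta(a),\ 0<|s|<\delta\bigr\}.
\]
In this formula $V_\eta(a)$ is a small open neighbourhood of $a$ in the unit sphere direction. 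To justify it, we use the deformation to the normal cone $\widetilde X_{\CC}\to\CC_t$ with $p(x,s,t)=(x,ts)$, and we identify $\mathrm{Sp}=\psi_t$ of the pull-back by $p$ restricted to $t>0$. Equivalently, we use the limit formula $\varinjlim_U$ over open cones from \cite{Ver83} or Kashiwara--Schapira. Since $G$ is supported on the graph of $f$, we get
\[
H^j(U;G)=H^{j+n}\bigl(i_f^{-1}(U);\CC\bigr),
\]
and
\[
i_f^{-1}(U_{\e,\delta,\eta})=\{\|x\|<\e,\ f(x)\in \Gamma_{\delta,\eta}\},
\]
where $\Gamma_{\delta,\eta}:=\{s\in \CC^\ast V_\eta(a),\ 0<|s|<\delta\}$ is a thin punctured conic sector around the complex line $\CC a$.

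\textbf{Step 2: fibration.} Since $a\notin C_{\{0\}}(\Delta_f)$, for $\eta$ small and $\delta\ll\e$ the sector $\Gamma_{\delta,\eta}$ does not meet $\Delta_f$. We then use the Thom condition in direction $a$: the stratified $a_f$-type condition ensures that the fibres of $f$ are transverse to the small spheres $\|x\|=\e$ over $\Gamma_{\delta,\eta}$, uniformly for $\delta\ll\e$. This is the same argument that gives existence of $M_{Y,0}$, which we may assume. By Ehresmann/Thom's first isotopy lemma,
\[
f\colon \{\|x\|<\e\}\cap f^{-1}(\Gamma_{\delta,\eta})\to\Gamma_{\delta,\eta}
\]
is a locally trivial fibration with fibre $M_{Y,0}=f^{-1}(s)\cap B_\e$.

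\textbf{Step 3: contractibility of the base.} The base is not contractible, since $\Gamma_{\delta,\eta}$ retracts onto a circle. The key point is Verdier's convention: $\mathrm{Sp}$ is the specialization along $\RR_{>0}$-cones. So the correct cofinal $U$ use real cones $\RR_{>0}V_\eta(a)$, not $\CC^\ast$-cones. With the sector $\{s\in\RR_{>0}\cdot V_\eta(a),\ |s|<\delta\}$, the base is contractible, being star-shaped after removing $0$ and open conic in $\RR_{>0}$. Hence
\[
H^{j+n}\bigl(i_f^{-1}(U);\CC\bigr)\simeq H^{j+n}(M_{Y,0};\CC).
\]
We also need the restriction maps in the inductive system to be isomorphisms when shrinking $\e,\delta,\eta$ compatibly. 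This follows from the same fibration and the conic structure, so the limit stabilizes.

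\textbf{Main obstacle.} The delicate step is Step 2: showing that the Thom condition in direction $a$ yields uniform transversality of $f^{-1}(s)$ to $S_\e$ for all $s$ in a whole conic neighbourhood of $a$, and not just along the ray $\RR_{>0}a$. We must also check that the real-conic neighbourhoods are cofinal for the limit defining the stalk. We would handle the first by a curve selection lemma argument by contradiction. Failure would produce an analytic curve $x(t)\to 0$ with $f(x(t))/|f(x(t))|\to a'$ near $a$, violating the Thom $a_f$ condition. Openness of $\CC^d\setminus C_{\{0\}}(\Delta_f)$ and of the Thom condition in the direction then gives the uniformity.
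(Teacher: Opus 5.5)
Your argument is correct once Step 1 is repaired, and it takes a different route from the paper.

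\textbf{The repair in Step 1.} The sets built from $\CC^\ast\cdot V_\eta(a)$ are admissible in the cone formula but not cofinal. The cofinal ones are the truncated real cones $\RR_{>0}V_\eta(a)\cap\{0<|s|<\delta\}$. Step 1 should state this from the outset, rather than assert the wrong family and correct it in Step 3.

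\textbf{Why the cone formula applies.} You should say this explicitly. The paper's $\mathrm{Sp}$ is $\psi_t(p^{-1}\,\cdot\,)$ with complex $t$. For $\CC$-constructible sheaves this agrees with the Kashiwara--Schapira specialization $\nu_{X\times\{0\}}$, because the nearby cycles can be computed over the ray $t>0$ (as in \cite[Lemma 5.3]{FKT26}, used in the Appendix). The cone stalk formula of \cite[Section 4.2]{KS90} then applies.

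\textbf{Your ``main obstacle'' is not really one.} Definition \ref{def-thom} is already formulated for sequences in the whole open cone $\CC^\ast U$. Lemma \ref{lem-fib} gives the fibration over $D^\ast_\eta\cap\CC^\ast U$, which contains your sectors once they are thin and short enough.

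\textbf{Stabilization of the transition maps.} This does not come from the conic structure. It comes from the transversality of $f^{-1}(s)$ to all spheres $\partial B(0;r)$ with $\e'\le r\le\e$, for $|s|\ll\e'$. The same Thom-condition argument provides this.

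\textbf{Comparison with the paper.} The paper proceeds as follows:
\begin{enumerate}
\item[(1)] It writes $\mathrm{Sp}\simeq\psi_t(\CC_{\{ts=f(x)\}})$ on the deformation space.
\item[(2)] It uses Proposition \ref{prop-cont}, which rests on the $(\e,\delta)$-transversality analysis of Section \ref{sec-pre}. This expresses the stalk at $(0,a)$ as $\rsect(B(0;\e)\times B(a;\lambda\e);\CC_{\{\tau s=f(x)\}})$ for a fixed small $\tau$, i.e.\ the cohomology of $B(0;\e)\cap f^{-1}(\tau B(a;\lambda\e))$.
\item[(3)] It collapses $B(a;\lambda\e)$ to the point $a$ by the non-characteristic deformation lemma applied to $\varphi(x,s)=|s-a|$. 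The Thom condition supplies the needed transversality of $f^{-1}(\tau s)$ to $\partial B(0;\e)$.
\end{enumerate}
Both proofs therefore reduce to the cohomology of $f^{-1}$ of a contractible region of the base inside the good cone. Both use the Thom condition only through transversality to small spheres. Your region is a truncated real cone supplied by the cone formula plus Lemma \ref{lem-fib}; the paper's is a small ball around $\tau a$ supplied by its nearby-cycle stalk formula.

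Your route is shorter and bypasses Section \ref{sec-pre} entirely, at the cost of invoking the comparison $\mathrm{Sp}\simeq\nu$. The paper's route keeps the fixed-$\tau$ presentation $\psi_t(\CC_{\{ta=f(x)\}})$ and the product-ball machinery, which it reuses in Proposition \ref{prop-versp} and Theorem \ref{thm-mult}.
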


Furthermore, we find the following nice structure of the 
perverse sheaf $\mathcal{G} \in \Perv(\CC_{X \times \CC^d})$. 

\begin{proposition}\label{flat-Ver} 
In the situation of Proposition \ref{prop-milnor-fiber}, 
let $U\subset \CC_s^d$ be a sufficiently small neighborhood of the point 
$a\in \CC^d \setminus C_{\{0\}}(\Delta_f)$ in $\CC^d \setminus C_{\{0\}}(\Delta_f)$
and $\alpha_U \colon X\times U \longrightarrow X$ the projection. 
Then there exists a perverse sheaf $G\in \Perv(\CC_X)$ on $X=\CC^n$ for which 
we have an isomorphism 
\begin{equation}
\mathcal{G}|_{X\times U} \simeq \alpha_U^{-1}G[d].
\end{equation}
\end{proposition}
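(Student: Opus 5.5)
Our plan is to compute $\mathcal{G}$ directly from Verdier's deformation to the normal cone. Consider the map
\[
p\colon X\times\CC_s^d\times\CC_t\longrightarrow X\times\CC_s^d,\qquad (x,s,t)\longmapsto (x,ts).
\]
Then $\mathcal{G}\simeq\psi_t(p^{-1}{i_f}_\ast\CC_X[n])$, up to the usual shift convention, where $\psi_t$ is the nearby cycle functor along $t=0$ and we identify $\{t=0\}$ with $X\times\CC^d$. Over $t\neq0$, the pull back $p^{-1}{i_f}_\ast\CC_X$ is the constant sheaf on the smooth hypersurface-like set
\[
Z^\ast=\Set{(x,s,t)}{t\neq0,\ f(x)=ts}\simeq\Set{(x,s,t)}{t\neq 0,\ s=f(x)/t}\simeq X\times\CC^\ast_t .
\]
This parametrization is useful. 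Over $(x,s)$ with $s\in U$, the fibre of the family near $t=0$ consists of the points with $f(x)=ts$, that is, $f(x)\in\CC^\ast s$ near $0$. So the stalk and local structure of $\mathcal{G}$ at $(x_0,s)$ are governed by the restriction of $f$ to the preimage of the complex line $\CC s$ through $0$ in the direction $s$.

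The key steps, in order, are as follows.

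\textbf{(1) Constancy in the direction variable.} We show that the family of lines $\ell_s=\CC s$, $s\in U$, can be trivialized compatibly with $f$. Since $U\cap C_{\{0\}}(\Delta_f)=\emptyset$, after shrinking the ball around $0\in\CC^d$ the sector $\CC^\ast\cdot U$ near $0$ avoids $\Delta_f$. Choosing a holomorphic family of linear automorphisms $g_s\in GL_d(\CC)$ with $g_s(a)=s$, depending holomorphically on $s\in U$, we obtain
\[
\mathcal{G}|_{X\times U}\simeq \mathrm{Sp}\bigl({i_{g_s^{-1}\circ f}}_\ast\CC_X[n]\bigr)\big|_{X\times\{a\}}
\]
fibrewise. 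We want to show this is locally constant in $s$. Concretely, we prove that $\mathcal{G}|_{X\times U}$ has microsupport contained in $T^\ast X\times T^\ast_U U$, i.e. it is non-characteristic for all directions in the $s$-variables. Then we apply the Kashiwara–Schapira result that such a sheaf on $X\times U$ with $U$ contractible is $\alpha_U^{-1}$ of its restriction (cf.\ \cite[Prop.\ 5.4.5]{KS90}-type statements). For this we bound $\msupp(\mathcal{G})\subset C_{T^\ast_{X\times 0}}(\msupp({i_f}_\ast\CC_X))$ via the Kashiwara–Schapira estimate for specialization.

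\textbf{(2) Microlocal estimate, the main obstacle.} We must show that the normal cone $C_{T^\ast_{X\times0}(X\times\CC^d)}(\msupp({i_f}_\ast\CC_X))$, restricted over $X\times U$, has zero $\sigma$-component, where $\sigma$ is dual to $s$. Now $\msupp({i_f}_\ast\CC_X)$ is the conormal of the graph, $\{(x,f(x);\,-{}^t df(x)\sigma,\sigma)\}$. Limits in the normal cone with nonzero $\sigma$ over a point $(x_0,a)$ arise from sequences $x_k\to x_0$ with $f(x_k)/|f(x_k)|\to a$ direction, together with covectors $\sigma_k$ such that ${}^t df(x_k)\sigma_k$ is small relative to $\sigma_k$. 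Here exactly the Thom condition in the direction $a$ (Definition \ref{def-thom}) is what forbids this: it says that limits of the relative conormals ${}^t df(x_k)\sigma_k$ along $f^{-1}(\CC^\ast a)$ lie in the conormals of the strata of $Y$. Combined with $a\notin C_{\{0\}}(\Delta_f)$, which prevents degeneration of $df$ on the fibres over the line, this gives the bound. We expect the delicate point to be handling the rescaling $\sigma\mapsto t\sigma$ in the normal cone. To do this we use curve selection along analytic arcs, following the same estimates as in the proof of Proposition \ref{prop-milnor-fiber}.

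\textbf{(3) Conclusion.} Set $G\coloneq\mathcal{G}|_{X\times\{a\}}[-d]$. Then $G$ is perverse: it is a non-characteristic restriction of a perverse sheaf by the codimension-$d$ inclusion $X\times\{a\}\hookrightarrow X\times\CC^d$, so the shift $[-d]$ preserves perversity. Step (1) then yields $\mathcal{G}|_{X\times U}\simeq\alpha_U^{-1}G[d]$, and Proposition \ref{prop-milnor-fiber} is consistent with this, since $H^{j}(G[d])_0$ recovers the Milnor fibre cohomology.
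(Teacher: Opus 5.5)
The gap is Step (2). It carries all the content of the proof, and the mechanism you invoke for it cannot work.

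\textbf{What Step (2) actually requires.} Under the Hamiltonian identification, the covector $\sigma$ dual to $s$ is a base coordinate of $T^\ast_{X\times\{0\}}(X\times\CC^d)$ and is \emph{not} rescaled. Only $(s,\xi)$ are rescaled, by a common factor $c_k\sim\abs{f(x_k)}^{-1}$. So a normal-cone point over $(x_0,s)$, $s\in U$, with $\sigma\neq 0$ arises from $x_k\to x_0$, $\sigma_k\to\sigma\neq0$, $c_kf(x_k)\to s$ and $\abs{{}^t df(x_k)\sigma_k}=O(\abs{f(x_k)})$. "Small relative to $\sigma_k$" is the wrong criterion. Excluding such sequences is a \emph{quantitative} statement: the smallest singular value $\nu(x)$ of $df(x)$ must satisfy $\nu(x)/\abs{f(x)}\to+\infty$ as $x\to Y$ inside $f^{-1}(\CC^\ast U)$.

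\textbf{Why your hypotheses do not give it.} The Thom condition of Definition \ref{def-thom} only constrains the limiting \emph{directions} of the conormals ${}^t df(x_k)\sigma_k$, not how fast they tend to $0$. The condition $a\notin C_{\{0\}}(\Delta_f)$ only gives $\nu>0$ pointwise on $f^{-1}(\CC^\ast U)$ near $0$, while $\nu\to0$ near singular points of $Y$. Worse, at a $0$-dimensional stratum such as the origin the Thom condition is vacuous. So a curve-selection argument at the limit point has nothing to use there, yet the estimate is needed over $(0,s)$. For $d=1$ the bound follows from the Lojasiewicz gradient inequality $\abs{df}\geq c\abs{f}^{\theta}$, $\theta<1$. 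For $d\geq 2$ no such inequality for $\nu$ is available in general. L\^e's example shows the rate is exactly what separates good and bad directions: in the direction $b=(0,1)$, the points $q_t=(t^N,t,z_1)$ with $z_1\neq0$ and $\sigma=(1,0)$ give $\abs{{}^t df(q_t)\sigma}\sim\abs{t}\sim\abs{f(q_t)}$.

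\textbf{Step (2) is essentially the statement itself.} By the limit formula of \cite{FKT26} for $\CCyc(\psi_t(\cdot))$, the characteristic cycle of the perverse sheaf $\mathcal{G}$ is the specialization of the cycles $[T^\ast_{\{\tau s=f(x)\}}(X\times\CC^d)]$. Their limit set is exactly this normal cone. So the bound in (2) over $X\times U$ is essentially equivalent to the proposition, not a lemma on the way to it. Your Step (1) with $g_s$ is only a fibrewise identification and gives no information on local constancy.

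\textbf{How the paper avoids this.} It never estimates micro-supports. It uses the Thom condition only where it has content, namely near $Y\cap\partial B(q;\e)$. There it yields uniform transversality of $f^{-1}(\tau s)$, $s\in\overline{B(p;\delta_0)}$, to small spheres, i.e.\ the Milnor fibration of Lemma \ref{lem-fib} over a whole sector $\CC^\ast U$. Combined with the transversality results of Section \ref{sec-pre} and the non-characteristic deformation lemma, this gives
$\rsect(\{q\}\times B(p;\delta_0);\mathcal{F})\simeq\rsect(B(q;\e)\times B(p;\delta_0);\mathcal{F})\simeq\rsect(B(q;\e)\times\{p^\prime\};\mathcal{F})\simeq\mathcal{F}_{(q,p^\prime)}$.
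The middle isomorphism goes through the nearby fibres $\{\tau s=f(x)\}$ as in Proposition \ref{prop-milfib}. The paper then concludes by \cite[Proposition 5.4.5]{KS90}, with perversity of $G$ as in your Step (3).

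To repair your route you would have to prove the singular-value estimate, and the natural way to do that is essentially this fibration argument.
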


Then, together with the results in \cite{FKT26} we obtain the following formula 
for the Euler obstruction $\mathrm{Eu}_Y$ of the C.I. subvariety 
$Y=\{f_1=\cdots =f_d=0 \} \subset X=\CC^n$. In what follows in 
this section, we always assume that 
$f=(f_1,\ldots ,f_d)\colon X\longrightarrow \CC_s^d$ satisfies 
the Thom condition in a direction $a\in \CC^d \setminus C_{\{0\}}(\Delta_f)$. 
Let $H \ (\simeq \CC^{n-1}) \subset X=\CC^n$ be a 
generic hyperplane passing through the origin 
$0 \in Y \subset X= \CC^n$. Then, if for 
$g\coloneq f|_H\colon H\longrightarrow \CC^d$ the 
condition $a\in \CC^d \setminus C_{\{0\}}(\Delta_{g})$ is satisfied, 
we can show that the Milnor fiber $M_{Y\cap H,0} \subset H\setminus{Y\cap H}$ of 
$g$ at the origin $0\in Y\cap H =g^{-1}(0) \subset H$ (in the direction 
$a$) exists (see Theorem \ref{thm-mult} (i)). 
Let $Y=\bigsqcup_{\alpha \in A} Y_\alpha$ be a Whitney stratification of $Y$ 
satisfying some conditions (see Section \ref{Eu-CI}) 
such that $\{0\} =Y_\alpha$ for some 
$\alpha \in A$ that we denote by $\alpha_0$. 
For $\alpha \in A$ such that $\alpha \neq \alpha_0, 0\in \overline{Y_\alpha}$ 
and $\dim Y_\alpha <\dim Y$ let $W_\alpha \ (\simeq \CC^{n-\dim Y_\alpha}) 
\subset X=\CC^n$ be an affine subspace which intersects the stratum $Y_\alpha$ 
at a point $p_\alpha \in Y_\alpha$ transversally. 
We call it a normal slice of $Y_\alpha$ at $p_\alpha \in W_\alpha \cap Y_\alpha$. 
Let $U_\alpha \subset W_\alpha$ be a sufficiently small neighborhood of 
$p_\alpha$ in $W_\alpha \simeq \CC^{n-\dim Y_\alpha}$ and 
$M_{Y\cap U_\alpha, p_\alpha} \subset U_\alpha \setminus{Y\cap U_\alpha}$ 
the Milnor fiber of $f_\alpha \coloneq f|_{U_{\alpha}} 
\colon U_\alpha \longrightarrow \CC^d$ at 
the point $p_\alpha \in Y\cap U_\alpha$ (in the direction $a$). 
Let $H_\alpha \ (\simeq \CC^{n-\dim Y_\alpha -1}) 
\subset W_\alpha$ be a generic hyperplane 
in $W_\alpha \simeq \CC^{n-\dim Y_\alpha}$ passing through the point 
$p_\alpha \in U_\alpha \subset W_\alpha$. Then, if 
for $g_\alpha \coloneq f|_{H_\alpha \cap U_\alpha} \colon 
H_\alpha \cap U_\alpha \longrightarrow \CC^d$ the condition 
$a\in \CC^d \setminus C_{\{0\}}(\Delta_{g_\alpha})$ is 
satisfied, we can define also the Milnor fiber 
$M_{Y\cap H_\alpha \cap U_\alpha, p_\alpha} \subset (H_\alpha \cap U_\alpha) 
\setminus (Y\cap H_\alpha \cap U_\alpha)$ of $g_{\alpha}$ at the point 
$p_\alpha \in Y\cap H_\alpha \cap U_\alpha$ (in the direction $a$).  

\begin{theorem}\label{thm-euler-formula}
In the situation of Proposition \ref{prop-milnor-fiber}, assume also 
that $a\in \CC^d \setminus C_{\{0\}}(\Delta_{g})$ and 
for any $\alpha \in A$ such that $\alpha \neq \alpha_0, 0\in \overline{Y_\alpha}$ 
and $\dim Y_\alpha <\dim Y$ we have $a\in \CC^d \setminus C_{\{0\}}(\Delta_{g_\alpha})$. 
Then for the Euler obstruction $\mathrm{Eu}_Y$ of $Y$ we have 
\begin{align}
\mathrm{Eu}_Y(0)=&\chi(M_{Y\cap H,0})\\
&- \sum_{\substack{\alpha \neq \alpha_0, 0\in \overline{Y_\alpha} \\\dim 
Y_\alpha <\dim Y}} \left\{ \chi(M_{Y\cap U_\alpha, p_\alpha}) -\chi(M_{Y\cap H_\alpha 
\cap U_\alpha, p_\alpha}) \right\} \cdot \mathrm{Eu}_{\overline{Y_\alpha}}(0).
\end{align}
\end{theorem}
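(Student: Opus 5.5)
We plan to compute $\mathrm{Eu}_Y(0)$ through the characteristic cycle of the perverse sheaf $G\in\Perv(\CC_X)$ of Proposition \ref{flat-Ver}. Write the characteristic cycle of $\CC_Y$ (or rather of the relevant perverse sheaf on $Y$) as a combination of conormal cycles $\sum_\alpha m_\alpha [T^\ast_{\overline{Y_\alpha}}X]$. The results of \cite{FKT26} express $\CCyc$ of the Verdier specialization in terms of Milnor fibers. Restricting to $X\times U$ and using $\mathcal{G}|_{X\times U}\simeq \alpha_U^{-1}G[d]$, we then get $\CCyc(\mathcal{G}|_{X\times U})=\CCyc(G)\times[T^\ast_UU]$. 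So it suffices to determine $\CCyc(G)=\sum_\alpha c_\alpha [T^\ast_{\overline{Y_\alpha}}X]$, up to the zero section part coming from $X\setminus Y$, where $G$ is locally constant.

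The key identity to establish is that $G$ is, up to a locally constant contribution, a ``nearby cycle'' type object supported near $Y$ whose stalks are computed by Proposition \ref{prop-milnor-fiber}. At a point $p_\alpha\in Y_\alpha$ this gives $\chi(G_{p_\alpha})=\pm\chi(M_{Y\cap U_\alpha,p_\alpha})$, after reducing to the normal slice $W_\alpha$; the Thom condition guarantees the Milnor fiber is transversally trivial along $Y_\alpha$. Then the standard index formula (Kashiwara's local index theorem)
\[
\chi(G_p)=\sum_\alpha (-1)^{\dim Y_\alpha} c_\alpha\,\mathrm{Eu}_{\overline{Y_\alpha}}(p)
\]
together with its hyperplane version (restricting to a generic hyperplane $H$ through $p$ removes the contribution of the conormal to the point stratum, via the microlocal multiplicity formula, cf.\ Theorem \ref{thm-mult}) yields the multiplicities. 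The multiplicity $c_\alpha$ of $T^\ast_{\overline{Y_\alpha}}X$ in $\CCyc(G)$ equals, up to sign, $\chi(M_{Y\cap U_\alpha,p_\alpha})-\chi(M_{Y\cap H_\alpha\cap U_\alpha,p_\alpha})$. This is the vanishing cycle of a generic linear form on the slice, with the slice Milnor fiber for $g_\alpha$ existing by the hypothesis $a\notin C_{\{0\}}(\Delta_{g_\alpha})$.

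Next we isolate $\mathrm{Eu}_Y$. For the top-dimensional strata, $c_\alpha$ is computed from a smooth point of $Y$, where $M$ is contractible-like in the normal slice. There $\chi(M_{Y\cap U_\alpha})-\chi(M_{Y\cap H_\alpha\cap U_\alpha})$ is $\pm1$, so the coefficient of $[T^\ast_{Y}X]$ is $\pm1$. For $\alpha_0$ we apply the same computation at $0$ itself with $H$, obtaining $\chi(M_{Y,0})-\chi(M_{Y\cap H,0})$. Evaluating the index formula at $0$ and comparing it with the formula for the restriction to $H$ (where $\chi(G|_H)_0=\pm\chi(M_{Y\cap H,0})$ and $\mathrm{Eu}$ is preserved under generic hyperplane sections through $0$ for the positive dimensional strata) gives
\[
\pm\chi(M_{Y\cap H,0})=\mathrm{Eu}_Y(0)+\sum_{\alpha\ne\alpha_0,\ \dim Y_\alpha<\dim Y} \pm c_\alpha\,\mathrm{Eu}_{\overline{Y_\alpha}}(0).
\]
The point term drops out because a generic $H$ misses its conormal microlocally. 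Rearranging gives the formula of Theorem \ref{thm-euler-formula}.

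The main obstacle will be the sign bookkeeping and the justification that the restriction to a generic hyperplane $H$ and to the normal slices commutes with taking Milnor fibers in direction $a$. This amounts to showing $G|_H$ is the corresponding sheaf for $g$ and $G|_{W_\alpha}$ is that for $f_\alpha$, which requires noncharacteristic restriction for the Verdier specialization. I would first prove this noncharacteristic property by checking that $\msupp(\mathcal{G})$ over $X\times U$ is contained in $\bigcup_\alpha T^\ast_{\overline{Y_\alpha}}X\times T^\ast_UU$ using the Thom condition, and then apply base change for $\mathrm{Sp}$ under transversal embeddings.
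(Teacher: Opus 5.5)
Your global architecture (Kashiwara's index theorem for $G$ at $0$, with the conormal multiplicities expressed through Milnor fibers) matches the paper's. There is a genuine gap, however: the mechanism you propose for the decisive step fails.

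\textbf{The hyperplane step.} You assert that $\chi(G|_H)_0=\pm\chi(M_{Y\cap H,0})$, i.e.\ that $G|_H$ is the nearby-cycle sheaf of $g=f|_H$ by non-characteristic restriction. But $(G|_H)_0=G_0$, whose Euler characteristic is $(-1)^{n-d}\chi(M_{Y,0})$ by Proposition~\ref{prop-milfib}. Moreover $H$ is \emph{characteristic} for $G$ at $0$, because $(0;d\ell)\in T^\ast_{\{0\}}X\subset\msupp(G)$ as soon as $m_0\neq 0$. The containment $\msupp(\mathcal{G}|_{X\times U})\subset\bigcup_\alpha \overline{T^\ast_{Y_\alpha}X}\times T^\ast_UU$ that you plan to verify is true. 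But it includes $T^\ast_{\{0\}}X\times T^\ast_UU$, so it gives non-characteristicity only on $H\setminus\{0\}$. At $0$ the stalk Euler characteristics of $G|_H[-1]$ and of the sheaf of $g$ differ by exactly $m_0$, which is the quantity you are trying to compute.

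\textbf{The Euler obstruction step.} Euler obstructions are not preserved by generic hyperplane sections through $0$. For $Y=\{x^3+y^3+z^3=0\}\subset\CC^3$ one has $\mathrm{Eu}_Y(0)=\chi(M_{Y\cap H,0})=-3$, while $Y\cap H$ is three lines and $\mathrm{Eu}_{Y\cap H}(0)=3$. Also, the sheaf of $g$ has its own nonzero point term at $0\in H$ (here $6$). These two errors cancel in your final display, but that cancellation is exactly the content of the theorem, so the argument is circular. The same objection applies to $H_\alpha$ through $p_\alpha$ on the slices. Only the normal slices $W_\alpha$ themselves, being transversal to $Y_\alpha$, are legitimately non-characteristic.

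\textbf{What is missing.} You need a genuine proof of $m_0=(-1)^{n-d}\{\chi(M_{Y,0})-\chi(M_{Y\cap H,0})\}$ (Theorem~\ref{thm-mult}\,(ii)) and of its slice analogue (Theorem~\ref{thm-coef}). The paper obtains it by a L\^e-type attaching argument:
\begin{itemize}
\item By \cite{FKT26}, $m_0$ is the number of complex Morse critical points of $\ell|_{Z_\tau}$ near $0$. Each is a real Morse point of index $n-d$ for $|\ell|^2|_{Z_\tau}$.
\item The Thom condition in direction $a$, together with a choice of $(\varepsilon,\delta)$ off the bad set $E$ of Section~\ref{sec-pre}, makes $Z_\tau$ transversal to $\partial B(0;\varepsilon)\times\partial B(0;\delta_2)$ and to $B(0;\varepsilon)\times\partial B(0;\delta_2)$.
\item The non-characteristic deformation lemma then shows that $M_{Y,0}$ is obtained from $M_{Y\cap H,0}$ by attaching $m_0$ cells of dimension $n-d$.
\end{itemize}
Once this is available, no restriction to $H$ is needed. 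In the index formula $\chi(G)(0)=(-1)^{n-d}\mathrm{Eu}_Y(0)+\sum_{\alpha}(-1)^{\dim Y_\alpha}m_\alpha\,\mathrm{Eu}_{\overline{Y_\alpha}}(0)+m_0$, insert $\chi(G)(0)=(-1)^{n-d}\chi(M_{Y,0})$ and the two multiplicity formulas. All signs collapse to $(-1)^{n-d}$ and the stated formula follows.

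If you want a hyperplane argument, use the complex link $B(0;\varepsilon)\cap\ell^{-1}(\eta)$ with $\eta\neq0$. It is non-characteristic and does kill the point term. However, identifying its $G$-cohomology with that of $M_{Y\cap H,0}$ still requires the Thom-condition transversality argument of Theorem~\ref{thm-mult}.
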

For the proof of Theorem \ref{thm-euler-formula}, in addition to Proposition 
\ref{flat-Ver} we are indebted to some ideas and results in \cite{CMSS16}, 
\cite{FKT26}, \cite{Gin86}, \cite{KS90} and \cite{Le73}. 
Note that if the C.I. variety $Y=\{f_1=\cdots =f_d=0\} \subset X=\CC^n$ has an 
isolated singular point at the origin $0\in Y$ then the Thom condition is 
satisfied for any direction $a\in \CC^d \setminus C_{\{0\}}(\Delta_f)$ and 
we have a Whitney stratification $Y=(Y\setminus \{0\})\sqcup \{0\}$ of $Y$. 
Hence the following classical result due to Kashiwara \cite{Kas83a}, 
Dubson \cite{Dub78}, Kato \cite{Kat78} and Piene \cite{Pie88} 
is a very special case of Theorem \ref{thm-euler-formula}. 
\begin{corollary}
Assume that the C.I. subvariety $Y=\{f_1=\cdots =f_d=0\} 
\subset X=\CC^n$ has an isolated singular point at the origin $0\in Y$. 
Then we have 
\begin{equation}
\mathrm{Eu}_Y(0)=\chi(M_{Y\cap H,0}) =1+(-1)^{n-d-1}\mu_{Y\cap H, 0},
\end{equation}
where $\mu_{Y\cap H}=\dim_{\CC} H^{n-d-1}(M_{Y\cap H,0}; \CC )$ is the Milnor number of 
the C.I. subvariety $Y\cap H \subset H$ of $H\simeq \CC^{n-1}$ at the origin 
$0\in Y\cap H$ (c.f. Hamm \cite{Ham71}).
\end{corollary}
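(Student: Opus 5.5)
We deduce the corollary from Theorem \ref{thm-euler-formula}, then compute the Euler characteristic of the Milnor fiber of the hyperplane section.

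\textbf{Step 1: the hypotheses of Theorem \ref{thm-euler-formula} hold.} Since $0\in Y$ is an isolated singular point and $Y$ is a complete intersection of dimension $n-d$, after shrinking $X$ to a small ball around $0$ the decomposition $Y=(Y\setminus\{0\})\sqcup\{0\}$ is a Whitney stratification. It satisfies the auxiliary conditions of Section \ref{Eu-CI}, which we take to be conditions on the strata near $0$. Away from $Y$, the map $f$ is a submersion over the complement of $\Delta_f$. On $Y\setminus\{0\}$ the differentials $df_1,\ldots,df_d$ are linearly independent, so the Thom $a_f$ condition holds there trivially, and it holds over the point stratum $\{0\}$ vacuously. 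Hence the Thom condition holds in every direction $a\in\CC^d\setminus C_{\{0\}}(\Delta_f)$.

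For a generic hyperplane $H$ through $0$, the section $Y\cap H$ is again an isolated complete intersection singularity of dimension $n-d-1$. This follows from Bertini-type genericity applied to the smooth part $Y\setminus\{0\}$. Its discriminant $\Delta_g$ is a hypersurface germ in $\CC^d$, so its tangent cone $C_{\{0\}}(\Delta_g)$ is a proper cone. Choosing $a$ outside the union $C_{\{0\}}(\Delta_f)\cup C_{\{0\}}(\Delta_g)$ is therefore possible.

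The sum in Theorem \ref{thm-euler-formula} runs over strata $Y_\alpha\neq\{0\}$ with $0\in\overline{Y_\alpha}$ and $\dim Y_\alpha<\dim Y$. The only stratum other than $\{0\}$ is $Y\setminus\{0\}$, which has dimension $\dim Y$, so the sum is empty. The theorem then gives $\mathrm{Eu}_Y(0)=\chi(M_{Y\cap H,0})$.

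\textbf{Step 2: the second equality.} Use Hamm's theorem \cite{Ham71}. The Milnor fiber of an ICIS of dimension $m=n-d-1$ has the homotopy type of a bouquet of $\mu$ spheres of dimension $m$, where $\mu=\mu_{Y\cap H,0}$. The Milnor fiber defined here, in the direction $a$, is the fiber $g^{-1}(ta)\cap B_\e$ for small $t\neq0$. Since $a\notin C_{\{0\}}(\Delta_g)$, the point $ta$ lies off $\Delta_g$ for small $t$. Because the complement of $\Delta_g$ is connected, this fiber agrees with Hamm's generic Milnor fiber. It follows that $\chi(M_{Y\cap H,0})=1+(-1)^{m}\mu$ with $m=n-d-1$, and that $\mu=\dim_{\CC}H^{n-d-1}(M_{Y\cap H,0};\CC)$.

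\textbf{Main obstacle.} The delicate point is Step 1. One must check that the specific conditions imposed on the stratification in Section \ref{Eu-CI}, and the genericity needed for $H$, are met by the two-stratum stratification. If any of those conditions concern the point stratum, they are handled by the isolatedness of the singularity of $Y\cap H$ at $0$.
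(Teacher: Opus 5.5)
Your proposal is correct and follows the paper's own route. The paper derives the corollary by noting that for an isolated singular point the Thom condition holds in every direction $a\in\CC^d\setminus C_{\{0\}}(\Delta_f)$ and that $Y=(Y\setminus\{0\})\sqcup\{0\}$ is a Whitney stratification, so the sum in Theorem \ref{thm-euler-formula} is empty; the second equality then comes from Hamm's bouquet theorem. Your two checks fill in details the paper leaves implicit: the Thom condition via linear independence of $df_1,\ldots,df_d$ on $Y\setminus\{0\}$, and the identification of the directional Milnor fiber with Hamm's via connectedness of the complement of $\Delta_g$.
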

Theorem \ref{thm-euler-formula} allows us to reduce the calculation of $\mathrm{Eu}_Y(0)$ to 
those of $\mathrm{Eu}_{\overline{Y_\alpha}}(0)$ for $\alpha \in A$ 
such that $\dim Y_\alpha <\dim Y$. 
Note that for $\alpha \in A$ such that $\dim Y_\alpha =1$ the 
calculation of $\mathrm{Eu}_{\overline{Y_\alpha}}(0)$ is very easy 
(see e.g. \cite[Theorem 3.1]{BLS00}). 
Hence in the case where the dimension of the singular set 
$Y_{\mathrm{sing}}\subset Y$ of $Y$ 
is $\leq 1$ we are done. 
Even in the case where $\dim Y_{\mathrm{sing}} \geq 2$, 
if $Y_{\mathrm{sing}}$ is a C.I. subvariety of $X=\CC^n$ we can apply 
Theorem \ref{thm-euler-formula} once again to reduce the calculation of 
$\mathrm{Eu}_{Y_{\mathrm{sing}}}(0)$ 
to those of lower dimensional subvarieties of $Y_{\mathrm{sing}}$. 
It would be a challenging problem to extend 
Theorem \ref{thm-euler-formula} to non C.I. subvariety of $X=\CC^n$. 

\medskip \indent 
Finally, we shall discuss the validity of our assumption on the C.I. subvariety 
$Y=\{f_1=\cdots =f_d=0\}\subset X=\CC^n$ i.e. to what extent 
$f\colon X\longrightarrow \CC^d$ satisfies the Thom condition in a direction 
$a\in \CC^d \setminus C_{\{0\}}(\Delta_f)$. 
If $d=1$ and $Y=\{f=0\}\subset X=\CC^n$ is a complex hypersurface, this condition is 
always satisfied and the Milnor fiber of $f$ at the origin $0\in Y$ exists. 
Note that in this case we have $\Delta_f=\{0\} \subset \CC$. 
However, to our surprise, if $d>1$ our assumption does not always hold true. 
Indeed, in Example \ref{ex-coexample} we construct an example of a C.I. subvariety 
$Y=\{f_1=\cdots =f_d=0 \} \subset X=\CC^n$ for which $f\colon X\longrightarrow \CC^d$ 
does not satisfy the Thom condition 
in any direction $a\in \CC^d \setminus C_{\{0\}}(\Delta_f)$. 
For this purpose, we use a classical result of Hironaka in \cite{Hir76}.
See Section \ref{sec-verdier} for the details. 
In Proposition \ref{prop-af} and Example \ref{ex-str-trans}, 
nevertheless we also develop a method to construct various examples of 
$Y=\{f_1=\cdots =f_d=0 \} \subset X=\CC^n$ for which $f\colon X\longrightarrow \CC^d$ 
satisfies the Thom condition in a direction $a \in \CC^d \setminus C_{\{0\}}(\Delta_f)$. 
At this moment, it is likely that for a generic choice of $f_1,\ldots , f_d \in 
\CC[x_1,\ldots , x_n]$ our assumption on the C.I. subvariety 
$Y=\{f_1=\cdots =f_d=0 \} \subset X=\CC^n$ holds true.

\bigskip
\noindent{\bf Acknowledgement:}
The authors would like to express their
heartfelt gratitude to Professor Toshizumi Fukui 
for useful discussions with him 
during the preparation of this paper.

\section{Preliminary notions and results}
In this section, we recall the definitions of some basic notions 
which will be used in this paper and explain their fundamental properties. 
For this purpose, we essentially follow the terminologies in \cite{Dim04}, 
\cite{HTT08}, \cite{Kas03}, \cite{KS90} and \cite{Sch03}. 
For a topological space $X$ we denote by $\BDC(X)$ the derived category 
consisting of bounded complexes of sheaves of $\CC_X$-modules on $X$. 
In what follows, we assume that $X$ is a complex manifold of dimension $n$. 
We denote by $X_{\RR}$ its underlying real analytic manifold. 
As in \cite[page 344]{KS90} we shall say that a subset $S\subset X$ is 
$\CC$-analytic if $\overline{S}$ and $\partial S\coloneq \overline{S} \setminus S$ 
are analytic subsets of $X$. We can easily see that $S\subset X$ is $\CC$-analytic 
if and only if $S=V\setminus W$ for some analytic subsets $V, W\subset X$ of $X$. 
It follows that if $S_1,S_2\subset X$ are $\CC$-analytic their intersection 
$S_1\cap S_2$ is also $\CC$-analytic. 
\begin{definition}
Let $Y\subset X$ be an analytic subset of $X$. Then a locally finite partition 
$Y=\bigsqcup_{\alpha \in A} Y_\alpha$ of $Y$ into $\CC$-analytic complex submanifolds 
$Y_\alpha \subset X$ of $X$ is called a (complex analytic) stratification of $Y$ 
if for any $\alpha \in A$ there exists a subset $B\subset A$ such that 
$\overline{Y_\alpha} =\bigsqcup_{\beta \in B}Y_\beta$. 
For $\alpha \in A$ the complex submanifold $Y_\alpha \subset X$ is called a stratum 
of the stratification $Y=\bigsqcup_{\alpha \in A}Y_\alpha$.
\end{definition}
A $\CC$-analytic subset $S\subset X$ is complex constructible in the sense of the 
following definition. 
\begin{definition}
We say that a subset $S\subset X$ is (complex) constructible if there exists a 
(complex analytic) stratification $X=\bigsqcup_{\alpha \in A}X_\alpha$ of $X$ and a 
subset $B\subset A$ such that $S=\bigsqcup_{\beta \in B}X_\beta$.
\end{definition}
We denote the family of (complex) constructible subsets of $X$ by $\mathcal{C}_X$. 
Then it is easy to see that if $S_1, S_2 \in \mathcal{C}_X$ then 
$S_1\cap S_2, \ S_1\cup S_2 \in \mathcal{C}_X$. 
Constructible subsets of complex manifolds enjoy the following nice functorial properties for morphisms among them. 
\begin{lemma}\label{lem-const}
Let $f\colon X\longrightarrow Y$ be a morphism of complex manifolds. 
Then we have 
\begin{enumerate}
\item [(i)] If $S\in \mathcal{C}_Y$ then $f^{-1}(S)\in \mathcal{C}_X$. 
\item [(ii)] If $f$ is proper and $S\in \mathcal{C}_X$ then $f(S) \in \mathcal{C}_Y$.
\end{enumerate}
\end{lemma}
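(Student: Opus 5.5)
The plan is to reduce both statements to the corresponding facts for $\CC$-analytic subsets, which are then handled by standard tools: intersections for (i), and Remmert's proper mapping theorem for (ii).

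\emph{Proof of (i).} Take a stratification $Y=\bigsqcup_{\beta} Y_\beta$ with $S=\bigsqcup_{\beta\in B}Y_\beta$. Since each $Y_\beta$ is $\CC$-analytic, we may write $Y_\beta=V_\beta\setminus W_\beta$ with $V_\beta,W_\beta$ analytic. Then $f^{-1}(Y_\beta)=f^{-1}(V_\beta)\setminus f^{-1}(W_\beta)$ is $\CC$-analytic in $X$, and the family $\{f^{-1}(Y_\beta)\}$ is locally finite, because $f$ is continuous and the original family is locally finite. It therefore suffices to prove two auxiliary facts.

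The first fact is that every $\CC$-analytic set is constructible. To see this, take a Whitney (or any analytic) stratification of $X$ adapted to the locally finite family of analytic sets $\overline{S}$ and $\partial S$, so that each of these is a union of strata. Then $S$ is a union of strata as well.

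The second fact is that, for a locally finite family of $\CC$-analytic sets, there is a single stratification of $X$ compatible with all of them. This follows from the standard existence theorem for stratifications adapted to a locally finite family of analytic subsets, applied to all the sets $\overline{f^{-1}(Y_\beta)}$ and $\partial f^{-1}(Y_\beta)$.

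Given these two facts, $f^{-1}(S)=\bigsqcup_{\beta\in B}f^{-1}(Y_\beta)$ is a union of strata of this common stratification, hence lies in $\mathcal{C}_X$.

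\emph{Proof of (ii).} Write $S=\bigsqcup_{\beta\in B}X_\beta$ as a union of strata. Then $f(S)=\bigcup_\beta f(X_\beta)$, and since $f$ is proper this union is locally finite. So it suffices to show that each $f(X_\beta)$ is constructible, using that finite unions and intersections of constructible sets are constructible, and that locally finite unions are handled by the common-refinement argument above.

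We argue by induction on $\dim \overline{X_\beta}$. Set $Z=\overline{X_\beta}$, an analytic set, and $\partial=Z\setminus X_\beta$, an analytic set of smaller dimension. By Remmert's proper mapping theorem, $f(Z)$ and $f(\partial)$ are analytic subsets of $Y$. Moreover,
\[
f(X_\beta)=\bigl(f(Z)\setminus f(\partial)\bigr)\cup\bigl(f(X_\beta)\cap f(\partial)\bigr).
\]
The first piece is $\CC$-analytic, hence constructible by the first auxiliary fact.

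The second piece is where the main difficulty lies. Here we restrict $f$ to the proper map $f^{-1}(f(\partial))\cap Z\to f(\partial)$ and use that the preimage sets are constructible by (i). The set $X_\beta\cap f^{-1}(f(\partial))$ is a constructible subset of an analytic set of possibly the same dimension as $Z$, so induction on $\dim Z$ alone does not immediately apply.

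To overcome this, we induct instead on $\dim f(\overline{S})$ (or on the pair $(\dim f(\overline S),\dim\overline S)$), and stratify $f$ via a Thom--Whitney stratification of the proper map $f|_Z$. Over each stratum $T$ of $f(Z)$, the image $f(X_\beta)\cap T$ is open and closed in $T$, by the local triviality of $f$ over $T$ given by Thom's first isotopy lemma. It is therefore a union of connected components of $T$, hence constructible. This stratified-map approach is what I would actually use; it settles (ii) directly and avoids the bookkeeping of the induction.
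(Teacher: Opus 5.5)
Your proposal is correct and follows the same route as the paper. For (i), the paper also pulls back the $\CC$-analytic pieces and uses that (locally finite) unions of $\CC$-analytic sets are constructible. For (ii), it simply cites the stratification theorem for proper analytic maps in Goresky--MacPherson (page 43), and your final stratified-map argument via Thom's first isotopy lemma is exactly how that citation is used.

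You can drop the abandoned induction entirely. Two points should be stated explicitly. First, the stratification of $f$ must be chosen so that $\overline{X_\beta}$ and $\partial X_\beta$ are unions of source strata, so that local triviality over each target stratum preserves $X_\beta$. Second, a union of connected components of a target stratum $T$ is constructible because each connected component has the form $C\cap T$ with $C$ an irreducible component of $\overline{T}$, and is therefore $\CC$-analytic.
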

\begin{proof}
(i) The problem being local on $Y$, we may assume that $S$ is a finite union of 
$\CC$-analytic subsets $S_i \subset Y \ (i\in I)$. 
For any $i\in I$ we can easily show that $f^{-1}(S_i) \subset X$ is $\CC$-analytic 
and hence $f^{-1}(S_i) \in \mathcal{C}_X$. 
It follows that we have $f^{-1}(S)=\bigcup_{i\in I} f^{-1}(S_i) \in \mathcal{C}_X$. \\
(ii) The assertion immediately follows from the results in \cite[page 43]{GM88}.
\end{proof}
\begin{definition}
Let $Y\subset X$ be a complex analytic subset of $X$. 
A stratification $Y=\bigsqcup_{\alpha \in A}Y_\alpha$ of $Y$ is called a Whitney 
stratification if it satisfies the following conditions (a) and (b): 
\begin{enumerate}
\item [(a)]
Assume that a sequence $x_i\in Y_\alpha$ of points converges to a point $y\in Y_\beta \ 
(\alpha \neq \beta)$ and the limit $\mathbf{T}$ of the tangent spaces $T_{x_i}S_\alpha$ exists. 
Then we have $T_y Y_\beta \subset \mathbf{T}$. 
\item [(b)] 
Let $x_i \in Y_\alpha$ and $y_i \in Y_\beta$ be two sequences of points which converge to 
the same point $y\in Y_\beta \ (\alpha \neq \beta)$. Assume further that the limit $\ell$ 
(resp. $\mathbf{T}$) of the complex lines $\ell_i$ joining $x_i$ and $y_i$ (resp. of the 
tangent spaces $T_{x_i}Y_\alpha$) exists. 
Then we have $\ell \subset \mathbf{T}$. 
\end{enumerate}
\end{definition}
It is well known that any stratification of an analytic set can be refined to satisfy the 
Whitney conditions. In addition, we prepare a basic property of 
Whitney stratifications and prove it for the convenience of readers. 

\begin{lemma}\label{lem-hironaka}
Let $Y\subset X=\CC^n$ be a complex analytic C.I. subvariety of $X= \CC^n$ 
and $Y=\bigsqcup_{\alpha \in A} Y_\alpha$ its complex 
analytic stratification satisfying the 
Whitney conditions (a) and (b). Then any stratum $Y_\alpha$ in it is contained in either 
$Y_{\mathrm{reg}}\coloneq Y\setminus Y_{\mathrm{sing}}$ or $Y_{\mathrm{sing}}$.
\end{lemma}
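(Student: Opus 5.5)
Since each stratum $Y_\alpha$ is a connected (or at least locally closed) complex submanifold of $X$, it suffices to show that the set $Y_\alpha\cap Y_{\mathrm{sing}}$ is both open and closed in $Y_\alpha$; we may assume $Y_\alpha$ connected (otherwise argue componentwise, or note that a locally closed complex manifold's partition into components is harmless). Closedness is immediate, because $Y_{\mathrm{sing}}$ is a closed analytic subset of $Y$. So the whole point is openness: if $y\in Y_\alpha$ is a singular point of $Y$, then every point of $Y_\alpha$ near $y$ is singular. Equivalently, the set $Y_\alpha\cap Y_{\mathrm{reg}}$ is closed in $Y_\alpha$, i.e. if points $x_i\in Y_\alpha\cap Y_{\mathrm{reg}}$ converge to $y\in Y_\alpha$, then $y\in Y_{\mathrm{reg}}$.

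To prove this, I would use the local topological triviality of $Y$ along a Whitney stratum, a consequence of conditions (a) and (b) via the Thom--Mather isotopy lemma. Near any $y\in Y_\alpha$ there is a homeomorphism of germs $(Y,y)\simeq (N_y,y)\times(\CC^{\dim Y_\alpha},0)$, where $N_y=Y\cap W$ for a normal slice $W$, and the topological type of $N_y$ is locally constant along $Y_\alpha$. Hence the local topology of $Y$ at points of $Y_\alpha$ is locally constant along $Y_\alpha$. I would then characterize smoothness topologically for the C.I. $Y$. If $x\in Y_{\mathrm{reg}}$, then $Y$ is a topological manifold near $x$; with the product structure this forces $N_x$ to be a topological manifold germ. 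For a C.I. (hence, at least, locally a reduced complete intersection) one uses that it is smooth at a point iff its multiplicity is $1$ there. Multiplicity is not a topological invariant in general, so instead I would use the Euler-obstruction-free and more robust algebraic fact: along a Whitney stratum the multiplicity of $Y$ is constant (Whitney (b) implies equimultiplicity along strata, a classical result due to Hironaka), and a point of a pure-dimensional reduced analytic space is regular iff its multiplicity equals $1$. Since $Y$ is a complete intersection it is pure-dimensional, and we take it reduced (the variety structure). Therefore $\mathrm{mult}_x Y$ is constant on the connected $Y_\alpha$, so either all points of $Y_\alpha$ have multiplicity $1$ (then $Y_\alpha\subset Y_{\mathrm{reg}}$) or none do (then $Y_\alpha\subset Y_{\mathrm{sing}}$).

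The main obstacle is justifying the equimultiplicity of $Y$ along Whitney strata, which is a nontrivial theorem of Hironaka (Whitney conditions imply normal flatness, hence equimultiplicity); I would cite it rather than reprove it, which is presumably why the lemma is labelled with Hironaka's name. An alternative, more elementary route I would try if one wants to avoid it: use Whitney (a) directly, showing that if $y\in Y_\alpha$ is a limit of points $x_i\in Y_\alpha\cap Y_{\mathrm{reg}}$, then the tangent cone at $y$ is a linear space by controlling limits of tangent spaces of the top stratum; but this only gives reducedness/linearity of limits, not smoothness, so the equimultiplicity argument is the one I would commit to.
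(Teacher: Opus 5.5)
Your committed argument (constancy of the multiplicity of $Y$ along connected Whitney strata, combined with the multiplicity-one criterion for the pure-dimensional reduced C.I.\ $Y$) is exactly the paper's proof, which cites \cite[Theorem 1.5.9]{Tro20} for equimultiplicity and \cite[Theorem 6.8]{HIO88} for the criterion. The open/closed framing and the topological-triviality detour are unnecessary; your restriction to connected strata is left implicit in the paper, and it is genuinely needed, because the paper's definition of stratification does not require connected strata and the statement can fail for a disconnected stratum.
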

\begin{proof}
Suppose that there exists a stratum $Y_\alpha$ such that 
$Y_\alpha \cap Y_{\mathrm{reg}}\neq \varnothing$ and 
$Y_\alpha \cap Y_{\mathrm{sing}} \neq \varnothing$. 
By \cite[Theorem 1.5.9]{Tro20}, the multiplicity of $Y$ is constant on 
any stratum of $Y$. Since $Y_\alpha \cap Y_{\mathrm{reg}} \neq \varnothing$, 
the multiplicity of $Y$ is equal to $1$ on the whole $Y_\alpha$. 
It follows from the multiplicity one criterion that 
$Y_\alpha \subset Y_{\mathrm{reg}}$ (see e.g. \cite[Theorem 6.8]{HIO88}). 
This is a contradiction to the condition $Y_\alpha \cap Y_{\mathrm{sing}} \neq \varnothing$. 
\end{proof}
As we see in the example below, 
Lemma \ref{lem-hironaka} does not always hold for stratifications satisfying 
only the Whitney (a) condition. 
\begin{example}
Let us consider a complex hypersurface 
$Y\coloneq \{x^2+y^2+xz^2=0\} \subset \CC_{x,y,z}^3$ and take its stratification
$Y=\bigsqcup_{i=0,1} Y_i$ as follows: 
\begin{equation}
\begin{cases}
Y_0\coloneq \{(0,0,z) \mid z\in \CC \} \\
Y_1\coloneq Y\setminus Y_0.
\end{cases}
\end{equation}
It is easy to see that this stratification satisfies the Whitney (a) condition. 
However, the Whitney (b) condition does not hold. 
Indeed, let us take two sequences of points $p_t=(-t^2,0,t) \in Y_1 \ (t\in \CC)$ and 
$q_t=(0,0,t)\in Y_0 \ (t\in \CC^*)$. 
Then the limit of the complex lines joining $p_t$ and $q_t$ (resp. 
the tangent spaces $T_{p_t} Y_1$) as $t \longrightarrow 0$ 
is $\mathrm{Span}_\CC \{ \partial_x\}$ (resp. 
$\mathrm{Span}_\CC \{ \partial_y, \partial_z\}$). 
On the other hand, since this hypersurface $Y$ has an isolated singular 
point at the origin $0\in \CC_{x,y,z}^3$ 
i.e. $Y_{\mathrm{sing}}=\{0\}$,  
we obtain $Y_0 \cap Y_{\mathrm{reg}}\neq \varnothing$ and $Y_0 \cap Y_{\mathrm{sing}} \neq \varnothing$.
\end{example}
\begin{definition}
Let $F$ be a sheaf of $\CC_X$-modules on $X$. 
Then we say that $F$ is (complex) constructible if there exists a (complex 
analytic) stratification 
$X=\bigsqcup_{\alpha \in A}X_\alpha$ of $X$ such that for any $\alpha \in A$ 
we have an isomprphism $F|_{X_\alpha}\simeq\CC_{X_\alpha}^{\oplus r_\alpha}$ 
locally on $X_\alpha$ for some $r_\alpha \geq 0$. 
\end{definition}
More generally, for an object $F\in \BDC(X)$ of the derived category 
$\BDC(X)$ we say that $F$ is (complex) constructible if its cohomology sheaf $H^jF$ 
is constructible for any $j\in \ZZ$. 
We denote by $\BDC_c(X) \subset \BDC(X)$ the full subcategory of $\BDC(X)$ consisting of 
constructible objects. 
Similarly, we can define $\ZZ$-valued 
constructible functions on $X$ as follows. 
\begin{definition}
Let $\varphi \colon X\longrightarrow \ZZ$ be a $\ZZ$-valued function on $X$. 
Then we say that $\varphi$ is (complex) constructible if there exists a (complex analytic) stratification 
$X=\bigsqcup_{\alpha \in A}X_\alpha$ of $X$ such that for any $\alpha \in A$ the restriction 
$\varphi|_{X_\alpha} \colon X_\alpha \longrightarrow \ZZ$ of $\varphi$ to the stratum 
$X_\alpha \subset X$ is constant. 
\end{definition}
We set $\CF_{\ZZ}(X)\coloneq \{\varphi \colon X\longrightarrow \ZZ \quad \text{constructible functions} \}$. 
Then it turns out that $\CF_{\ZZ}(X)$ is naturally an abelian group for the additions of functions. 
Constructible sheaves and functions are 
related as follows. 
Let $F\in \BDC_c(X)$ be a constructible sheaf (object) on $X$. 
We define a $\ZZ$-valued function $\chi(F) \colon X \longrightarrow \ZZ$ by 
\begin{equation}
\chi(F)(x)\coloneq \sum_{j\in \ZZ}(-1)^j\dim_{\CC}(H^jF)_x \quad (x\in X).
\end{equation}
Then we can easily check that $\chi(F)$ is a constructible function on $X$ 
i.e. $\chi(F) \in \CF_{\ZZ}(X)$. 
Among the constructible functions on $X$, 
the following ones defined for analytic subsets $Y\subset X$ of $X$ are of exceptional importance. 
Here we recall Kashiwara's definition in \cite{Kas73} and \cite{Kas83a}. 
Let $Y\subset X$ be an irreducible analytic subset of $X$ and $Y=\bigsqcup_{\alpha \in A}Y_\alpha$ 
its Whitney stratification consisting of connected strata $Y_\alpha$. 
Then we define a constructible function $\varphi \colon X\longrightarrow \ZZ$ such that 
$\varphi|_{X\setminus Y} \equiv 0$ and $\varphi|_{Y_\alpha}$ is constant for any $\alpha \in A$ as follows. 
First, for $\alpha \in A$ such that $\codim_Y Y_\alpha =0$ we set $\varphi|_{Y_\alpha} \equiv 1$. 
For $\alpha \in A$ such that $\codim_Y Y_\alpha >0$ we define the value of 
$\varphi$ on $Y_\alpha$ by induction on the codimension $\codim_Y Y_\alpha$ of $Y_\alpha$ in $Y$ 
in the following way. 
Suppose that for $k\geq 0$ the values of $\varphi$ on the strata $Y_\alpha \subset Y$ 
such that $\codim_Y Y_\alpha \leq k$ are already determined. 
Then for a stratum $Y_\alpha \subset Y$ such that $\codim_Y Y_\alpha =k+1$ we set 
\begin{equation}\label{eq-def}
\varphi(p_\alpha) \coloneq 
\sum_{\substack{\alpha^\prime \in A \\\codim_Y Y_{\alpha^\prime}\leq k}} 
\varphi(Y_{\alpha^\prime}) \cdot \chi(B(p_\alpha;\e)\cap \{\psi_\alpha >0\}\cap Y_{\alpha^\prime}),
\end{equation}
where $p_\alpha \in Y_\alpha$ is a reference point of the connected stratum $Y_\alpha \subset Y$, 
$B(p_\alpha;\e) \subset X$ is an open ball centered at $p_\alpha$ with radius $0<\e \ll 1$ 
in $X$ and $\psi_\alpha$ is a real analytic function on a neighborhood of $p_\alpha$ in $X$ 
satisfying the conditions: 
\begin{enumerate}
\item[(i)] $\psi_\alpha|_{Y_\alpha}\equiv 0$,
\item[(ii)] $d\psi(p_\alpha) \in T_{Y_\alpha}^\ast X \setminus 
\bigcup_{\alpha^\prime \neq \alpha}\overline{T_{Y_{\alpha^\prime}}^\ast X}$, 
\end{enumerate}
where for the condition (ii) we used the natural identification $T^\ast(X_{\RR})\simeq (T^\ast X)_{\RR}$ 
(see e.g. \cite[Section 11.1]{KS90}). 
In \cite[Chapter 6]{Kas83a} Kashiwara proved that the constructible function 
$\varphi \colon X\longrightarrow \ZZ$ on $X$ such that 
$\varphi|_{X\setminus Y} \equiv 0$ 
thus defined does not depend on the choice of Whitney stratifications of $Y\subset X$. 
In \cite{Mac74} MacPherson defined the same function $\varphi \in \CF_{\ZZ}(X)$ 
by using the Nash blow-up of $Y\subset X$ in order to settle a conjecture of 
Grothendieck and Deligne. 
We call it the Euler obstruction of $Y\subset X$ and denote it by $\mathrm{Eu}_Y \in CF_{\ZZ}(X)$. 
Euler obstructions are defined also for reducible analytic subsets $Y\subset X$ as follows. 
For a reducible analytic subset $Y\subset X$ let $Y=\bigcup_{i\in I}Y_i$ be its 
irreducible decomposition and set $\mathrm{Eu}_Y\coloneq \sum_{i\in I}\mathrm{Eu}_{Y_i}$. 
Then we can easily see that for the regular part $Y_{\mathrm{reg}} \subset Y$ of $Y$ 
we have $\mathrm{Eu}_Y|_{Y_{\mathrm{reg}}} \equiv 1$. 
From now on, we shall recall Kashiwara's index theorem proved in \cite{Kas73} and \cite{Kas83a}. 
Let $\Perv(\CC_X) \subset \BDC_c(X)$ be the full (abelian) subcategory of $\BDC_c(X)$ 
consisting of perverse sheaves on $X$. 
In this paper, we employ the convention that $\CC_X[n] \in \BDC_c(X)$ is a perverse sheaf on $X$. 
Recall that by Kashiwara's constructibility theorem (see e.g. \cite[Theorem 4.6.6]{HTT08}) 
for a holonomic $\SD_X$-module $\SM$ on $X$ and its solution complex 
$\Sol_X(\SM)\coloneq \rhom_{\SD_X}(\SM, \SO_X) \in \BDC(X)$ 
we have $\Sol_X(\SM)[n] \in \Perv(\CC_X)$. 
Moreover, for the (abelian) category $\Modrh(\SD_X)$ of regular holonomic 
$\SD_X$-modules on $X$ Kashiwara proved that we have an equivalence of categories 
\begin{equation}
\Sol_X(\cdot)[n]\colon \Modrh(\SD_X)^{\op} \simto \Perv(\CC_X),
\end{equation}
where op denotes the opposite category. 
We call it the Riemann-Hilbert correspondence. 
The following beautiful result proved in \cite{Kas73} and \cite{Kas83a} is called Kashiwara's 
index theorem. 
Recall that for a holonomic $\SD_X$-module $\SM \in \Modhol(\SD_X)$ 
on $X$ its characteristic variety $\ch \SM \subset T^\ast X$ is a $\CC^\ast$-conic and 
Lagrangian analytic subset and there exists a Whitney stratification $X=\bigsqcup_{\alpha \in A}X_\alpha$ 
of $X$ such that 
\begin{equation}
\ch \SM \subset \bigsqcup_{\alpha \in A}T_{X_\alpha}^\ast X
\end{equation}
(see e.g. \cite[Theorem E.3.9]{HTT08} etc.). 
For $\alpha \in A$ let $\mult_{T_{X_\alpha}^\ast X}(\SM) \geq 0$ be the multiplicity of 
$\SM$ along the Lagrangian submanifold $T_{X_\alpha}^\ast X \subset T^\ast X$. 
\begin{theorem}
In the situation as above, we have an equality 
\begin{equation}
\chi(\Sol_X(\SM))=\sum_{\alpha \in A}(-1)^{\codim_X X_\alpha} 
\mult_{T_{X_\alpha}^\ast X}(\SM) \cdot \mathrm{Eu}_{\overline{X_\alpha}}.
\end{equation}
Equivalently, for the perverse sheaf $F\coloneq \Sol_X(\SM)[n] \in \Perv(\CC_X)$ on $X$ 
we have 
\begin{equation}
\chi(F) =\sum_{\alpha \in A}(-1)^{\dim X_\alpha}\mult_{T_{X_\alpha}^\ast X}
(\SM) \cdot \mathrm{Eu}_{\overline{X_\alpha}}.
\end{equation}
\end{theorem}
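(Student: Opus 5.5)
The plan is to deduce the index formula from Kashiwara's local index formula for the characteristic cycle, together with an inversion of the defining recursion of the Euler obstruction. First, by Kashiwara's constructibility theorem and the Whitney stratification adapted to $\ch\SM$, the perverse sheaf $F=\Sol_X(\SM)[n]$ is constructible with respect to $X=\bigsqcup_{\alpha}X_\alpha$. Its characteristic cycle then has the form $\CCyc(F)=\sum_\alpha m_\alpha [T^\ast_{X_\alpha}X]$, with $m_\alpha=\mult_{T^\ast_{X_\alpha}X}(\SM)$; this identification of multiplicities under Riemann--Hilbert is standard, e.g. \cite[Chapter 9]{KS90}. The two displayed forms of the theorem are equivalent: since $\chi(F)=(-1)^n\chi(\Sol_X(\SM))$, it suffices to prove the second.

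The key local input is the characterization of $m_\alpha$ through vanishing cycles. For $p_\alpha\in X_\alpha$, take $\psi_\alpha$ as in the definition of $\mathrm{Eu}$ above, i.e. with $d\psi_\alpha(p_\alpha)$ a generic conormal covector to $X_\alpha$. Then, up to the sign $(-1)^{\dim X_\alpha}$, we have
\begin{equation*}
m_\alpha=\pm\Bigl(\chi(F)(p_\alpha)-\chi\bigl(\rsect(B(p_\alpha;\e)\cap\{\psi_\alpha>0\};F)\bigr)\Bigr).
\end{equation*}
Here $\chi(F)(p_\alpha)$ is the Euler characteristic of $\rsect(B(p_\alpha;\e);F)$. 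Next we use that $\chi$ of sections on the open set $B\cap\{\psi_\alpha>0\}$ can be computed with constructible functions, i.e. via integration against the Euler characteristic, which is $\sum_\beta \chi(F)|_{X_\beta}\cdot\chi(B\cap\{\psi_\alpha>0\}\cap X_\beta)$. This reduces everything to a purely combinatorial statement about constructible functions.

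Now we prove the formula by induction on strata, ordered by decreasing dimension. Let $\varphi:=\sum_\beta(-1)^{\dim X_\beta}m_\beta\,\mathrm{Eu}_{\overline{X_\beta}}$. We want to show that $\varphi$ and $\chi(F)$ have the same "microlocal multiplicities" at every stratum. To this end, apply the same local formula to $\mathrm{Eu}_{\overline{X_\beta}}$ itself. By the recursive definition \eqref{eq-def}, the vanishing-cycle-type quantity $\mathrm{Eu}(p_\alpha)-\int_{B\cap\{\psi_\alpha>0\}}\mathrm{Eu}$ equals $0$ when $\alpha\neq\beta$ and equals $1$ at $\alpha=\beta$. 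Equivalently, $\CCyc(\mathrm{Eu}_{\overline{X_\beta}})=(-1)^{\dim X_\beta}[T^\ast_{X_\beta}X]$. Since the map $\varphi\mapsto$ (local multiplicities) is linear, and a constructible function is determined by these numbers (again by induction on strata: at a stratum $X_\alpha$ the value at $p_\alpha$ is recovered from the multiplicity plus values on strata already treated), we get $\chi(F)=\varphi$.

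The main obstacle is establishing that $\mathrm{Eu}_{\overline{X_\beta}}$ has characteristic cycle exactly the single conormal $(-1)^{\dim X_\beta}[T^\ast_{X_\beta}X]$. For strata with $\alpha\neq\beta$ and $X_\alpha\not\subset\overline{X_\beta}$ this is trivial. For $X_\alpha\subset\overline{X_\beta}\setminus X_\beta$ it is precisely Kashiwara's recursive definition, but one needs the independence of that definition from the choice of $\psi_\alpha$ and stratification, proved in \cite[Chapter 6]{Kas83a}, which we assume. One also has to check the sign conventions carefully; I would verify them on the case $F=\CC_X[n]$, where $m=1$ on the zero section.
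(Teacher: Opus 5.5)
The paper gives no proof of this statement; it quotes it from Kashiwara \cite{Kas73}, \cite{Kas83a}. So your argument is necessarily a different route, and its skeleton is sound. However, you have located the difficulty in the wrong step.

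For constructible functions $\theta$ constant on the (connected) strata, set $L_\alpha(\theta):=\theta(p_\alpha)-\sum_\gamma \theta|_{X_\gamma}\cdot\chi(B(p_\alpha;\e)\cap\{\psi_\alpha>0\}\cap X_\gamma)$. With the recursive definition \eqref{eq-def}, the identity $L_\alpha(\mathrm{Eu}_{\overline{X_\beta}})=\delta_{\alpha\beta}$ is not an obstacle at all: it \emph{is} the definition, together with Kashiwara's independence of choices. The $\mathrm{Eu}_{\overline{X_\beta}}$ form a basis of such functions, unitriangular for the closure order. Hence the theorem is literally equivalent to $L_\alpha(\chi(F))=(-1)^{\dim X_\alpha}\mult_{T^\ast_{X_\alpha}X}(\SM)$ for every $\alpha$. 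That identity is exactly your ``key local input'' combined with your additivity step, and you state both without proof. Your inversion argument is therefore correct but purely formal; the whole content of the index theorem sits in those two unproved steps.

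Neither step is automatic.

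\textbf{The local formula.} Because $\psi_\alpha|_{X_\alpha}\equiv 0$, the graph of $d\psi_\alpha$ meets $\msupp(F)$ along all of $X_\alpha\cap B(p_\alpha;\e)$ as soon as $m_\alpha\neq 0$. The microlocal Morse lemma for an isolated intersection point is what turns $F\simeq\CC_{X_\alpha}^{\oplus m_\alpha}[\dim X_\alpha]$ in the localized category into an Euler characteristic, and it does not apply as stated here. The fix has three parts:
\begin{itemize}
\item pass to a normal slice $N$ at $p_\alpha$, where $F|_N[-\dim X_\alpha]$ is microlocally $\CC_{\{p_\alpha\}}^{\oplus m_\alpha}$ near a generic covector of $T^\ast_{\{p_\alpha\}}N$ by \cite[Proposition 5.4.13]{KS90} (as the paper does in Section \ref{sec-hypersurface});
\item take $\psi_\alpha$ depending only on normal coordinates;
\item compare $\rsect(B(p_\alpha;\e)\cap\{\psi_\alpha>0\};F)$ with its analogue on $N$ by a non-characteristic deformation along $X_\alpha$.
\end{itemize}
This choice of $\psi_\alpha$ is legitimate because, by the basis remark, $L_\alpha$ does not depend on the admissible $\psi_\alpha$.

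\textbf{Additivity.} The formula $\chi(\rsect(U;F))=\sum_\gamma\chi(F)|_{X_\gamma}\cdot\chi(U\cap X_\gamma)$ for the non-compact open set $U=B(p_\alpha;\e)\cap\{\psi_\alpha>0\}$ is not a general property of integration against Euler characteristic. For $F=\CC_{(0,1/2)}$ and $U=(0,1)$ in $\RR$, the left side is $0$ and the right side is $1$. It holds here for three reasons:
\begin{itemize}
\item $\chi(\rsect(U;F))=\chi(\rsect_c(U;D_XF))$;
\item $\chi(D_XF)=\chi(F)$ for $\CC$-constructible $F$;
\item $\chi_c=\chi$ on open subsets of the even-dimensional oriented manifolds $X_\gamma$, by Poincar\'e duality.
\end{itemize}

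\textbf{Remaining points.} The identification $m_\alpha=\mult_{T^\ast_{X_\alpha}X}(\SM)$ is Proposition \ref{prop-mult}. On signs: with the paper's convention $\CCyc(\CC_Y[k])=(-1)^k[T^\ast_YX]$, one has $\CCyc(F)=\sum_\alpha(-1)^{\dim X_\alpha}m_\alpha[T^\ast_{X_\alpha}X]$ and $\CCyc(\mathrm{Eu}_{\overline{X_\beta}})=[T^\ast_{X_\beta}X]$. Your two formulas correspond to the opposite, effective convention. What your inversion actually uses is $L_\alpha(\chi(F))=(-1)^{\dim X_\alpha}m_\alpha$, which you can confirm on $F=\CC_{X_\alpha}[\dim X_\alpha]$.
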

On the complex manifold $X$ we can consider also $\RR$-constructible sheaves defined by 
subanalytic stratifications of $X_{\RR}$. 
Let $\BDC_{\RR-c}(X) \subset \BDC(X)$ be the full subcategory of $\BDC(X)=\BDC(X_{\RR})$ 
consisting of $\RR$-constructible sheaves. 
In \cite[Chapter IX]{KS90} for an $\RR$-constructible sheaf $F\in \BDC_{\RR-c}(X)$ 
Kashiwara and Schapira defined a Lagrangian cycle $\CCyc(F)$ in $T^\ast X_{\RR} \simeq (T^\ast X)_{\RR}$. 
We call it the characteristic cycle of $F$. 
Let us recall the standard properties of characteristic cycles. 
First, if $Y\subset X$ is a complex submanifold of $X$ and $k\in \ZZ$, 
then for $\CC_Y[k] \in \BDC_c(X) \subset \BDC_{\RR-c}(X)$ we have $\CCyc(\CC_Y[k]) 
=(-1)^k [T_Y^\ast X]$. 
Moreover, for $F\in \BDC_{\RR-c}(X)$ and its micro-support $\msupp(F) \subset T^\ast X_{\RR} \simeq (T^\ast X)_{\RR}$ 
the support of $\CCyc(F)$ is contained in $\msupp(F)$. 
For a perverse sheaf $F \in \Perv(\CC_X) \subset \BDC_c(X) \subset \BDC_{\RR-c}(X)$ 
on $X$ let $X=\bigsqcup_{\alpha \in A}X_\alpha$ be a (complex analytic) Whitney stratification of $X$ 
such that 
\begin{equation}
\msupp(F) \subset \bigsqcup_{\alpha \in A}T_{X_\alpha}^\ast X.
\end{equation}
Then by \cite[Theorem 9.5.2]{KS85} for any point 
\begin{equation}
p_\alpha \in T_{X_\alpha}^\ast X \setminus \bigcup_{\alpha^\prime \neq \alpha}
\overline{T_{X_{\alpha^\prime}}^\ast X}
\end{equation}
there exists $m_\alpha \geq 0$ such that we have an isomorphism $F\simeq 
\CC_{X_\alpha}^{\oplus m_\alpha}[\dim X_\alpha]$ 
in the localized category $\BDC(X;\{p_\alpha\})$ (for the definition see \cite[Definition 6.1.1]{KS90}). 
This implies that we have $\CCyc(F)=(-1)^{\dim X_\alpha}m_\alpha \cdot [T_{X_\alpha}^\ast X]$ 
on a neighborhood of $p_\alpha \in T^\ast X$ in $T^\ast X$. 
In fact, also the global equality 
\begin{equation}
\CCyc(F) =\sum_{\alpha \in A} (-1)^{\dim X_\alpha}m_\alpha \cdot [T_{X_\alpha}^\ast X]
\end{equation}
on the whole $T^\ast X$ holds true. 
Let $\SM \in \Mod_{\mathrm{rh}}(\SD_X)$ be the regular holonomic $\SD_X$-module which corresponds 
to $F\in \Perv(\CC_X)$ via the Riemann-Hilbert correspondence i.e. there exists an isomorphism 
$\Sol_X(\SM)[n] \simeq F$. 
Then by \cite[Theorem 10.1.1]{KS85} we have $\ch \SM =\msupp(F)$. 
Hence it is very natural to expect that the following proposition holds true. 
Since we do not find its proof in the literature, 
for the convenience of readers we shall give a short proof to it. 
\begin{proposition}\label{prop-mult}
In the situation as above, for any $\alpha \in A$ the multiplicity $\mult_{T_{X_\alpha}^\ast X}(\SM) \geq 0$ 
of $\SM$ along $T_{X_\alpha}^\ast X \subset T^\ast X$ is equal to $m_\alpha \geq 0$.  
\end{proposition}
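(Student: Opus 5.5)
The plan is to reduce to a local computation near a generic point $p_\alpha$ of $T_{X_\alpha}^\ast X$, where both multiplicities can be read off from a model situation.

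First, fix $\alpha \in A$ and a point
$p_\alpha \in T_{X_\alpha}^\ast X \setminus \bigcup_{\alpha^\prime \neq \alpha}\overline{T_{X_{\alpha^\prime}}^\ast X}$
lying over $x_\alpha \in X_\alpha$. The multiplicity $\mult_{T_{X_\alpha}^\ast X}(\SM)$ is defined via the microlocalization $\mathcal{E}_X \otimes_{\SD_X} \SM$ near $p_\alpha$: it is the length of $\mathcal{E}_X\otimes \SM$ at the generic point of the irreducible component $T_{X_\alpha}^\ast X$ of $\ch \SM$. So it suffices to identify $\mathcal{E}_X\otimes\SM$ on a neighborhood $\Omega$ of $p_\alpha$.

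Second, I would invoke the microlocal Riemann--Hilbert correspondence of Kashiwara--Kawai (regular holonomic $\mathcal{E}_X$-modules near $p_\alpha$ correspond to perverse objects of $\BDC(X;\{p_\alpha\})$), which is compatible with the global functor $\Sol_X(\cdot)[n]$. By \cite[Theorem 9.5.2]{KS85} we have $F \simeq \CC_{X_\alpha}^{\oplus m_\alpha}[\dim X_\alpha]$ in $\BDC(X;\{p_\alpha\})$. Let $\mathcal{B}_{X_\alpha|X}$ be the regular holonomic module with $\Sol_X(\mathcal{B}_{X_\alpha|X})[n]\simeq \CC_{X_\alpha}[\dim X_\alpha]$, namely the module of delta functions along $X_\alpha$, defined locally near $x_\alpha$ where $X_\alpha$ is a closed submanifold. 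Full faithfulness of the microlocal correspondence then yields an isomorphism of $\mathcal{E}_X$-modules on $\Omega$,
\begin{equation}
\mathcal{E}_X\otimes_{\SD_X}\SM \simeq (\mathcal{E}_X\otimes_{\SD_X}\mathcal{B}_{X_\alpha|X})^{\oplus m_\alpha}.
\end{equation}

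Third, $\mathcal{B}_{X_\alpha|X}$ is a simple holonomic module whose characteristic cycle is $[T_{X_\alpha}^\ast X]$ with multiplicity one. This is a direct computation in local coordinates with $X_\alpha=\{x_1=\cdots=x_c=0\}$, where $\mathcal{B}=\SD_X/(\sum_i\SD_X x_i+\sum_j\SD_X\partial_{x_j'})$ has a good filtration whose graded module is $\SO_{T^\ast_{X_\alpha}X}$. Since multiplicity is additive and determined microlocally, this gives $\mult_{T_{X_\alpha}^\ast X}(\SM)=m_\alpha$.

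The main obstacle is the second step: justifying that an isomorphism in the localized category $\BDC(X;\{p_\alpha\})$ lifts to an isomorphism of microlocalized modules. If I wanted to avoid the full microlocal Riemann--Hilbert theorem, I would first try a more elementary route. One can compare Euler characteristics by applying Kashiwara's index theorem on a transversal slice: restrict to a normal slice $N$ of $X_\alpha$ at $x_\alpha$, which is non-characteristic for both $\SM$ and $F$, so that $X_\alpha$ becomes a point. Then take a generic real or holomorphic function $g$ with $dg(x_\alpha)=p_\alpha$. On the sheaf side $m_\alpha$ is the rank of the vanishing cycles $\phi_g(F)$ at $x_\alpha$. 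On the $\SD$-module side the multiplicity is the rank of $\phi_g$ of the module via the Kashiwara--Malgrange comparison theorem, together with the fact that for generic $g$ the local intersection number of $\ch\SM$ with $\mathrm{graph}(dg)$ equals the multiplicity.
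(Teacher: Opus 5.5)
Your argument is correct in outline, but it follows a different route from the paper.

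\textbf{What the paper does.} The paper never lifts the microlocal isomorphism $F\simeq\CC_{X_\alpha}^{\oplus m_\alpha}[\dim X_\alpha]$ in $\BDC(X;\{p_\alpha\})$ to $\mathcal{E}_X$-modules. Instead it evaluates one numerical invariant on both sides: the stalk at $p_\alpha$ of $\rhom_{\SD_X}(\SM,\mathcal{C}^{\RR}_{X_\alpha|X})$, the holomorphic microfunction solutions.
\begin{itemize}
\item On the $\SD$-module side, Kashiwara's result \cite[Theorem 3.2.1]{Kas83a} gives $\CC^{\oplus \mult_{T^\ast_{X_\alpha}X}(\SM)}$.
\item On the sheaf side, the Sato--Kawai--Kashiwara vanishing gives $\mathcal{C}^{\RR}_{X_\alpha|X}=\mu_{X_\alpha}(\SO_X)[\codim_X X_\alpha]$. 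So the same stalk is $\mu_{X_\alpha}(\Sol_X(\SM)[\codim_X X_\alpha])_{p_\alpha}$.
\item This depends only on the image of $\Sol_X(\SM)$ in $\BDC(X;\{p_\alpha\})$, hence equals $\mu_{X_\alpha}(\CC_{X_\alpha}^{\oplus m_\alpha})_{p_\alpha}\simeq\CC^{\oplus m_\alpha}$.
\end{itemize}
In effect the paper tests against the single object $\CC_{X_\alpha}$ through $\mu_{X_\alpha}=\mu hom(\CC_{X_\alpha},\cdot)$, so it never needs any statement about morphisms.

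\textbf{What your route needs.} You need full faithfulness of $\SM\mapsto\Sol_X(\SM)$ from germs at $p_\alpha$ of microlocalized regular holonomic modules to $\BDC(X;\{p_\alpha\})$, compatibly with composition. Concretely, $\rhom_{\mathcal{E}_X}(\mathcal{E}_X\otimes\SM,\mathcal{E}_X\otimes\mathcal{N})\simeq\mu hom(\Sol_X\mathcal{N},\Sol_X\SM)$ for regular holonomic $\SM,\mathcal{N}$. This follows from Kashiwara--Kawai's comparison of $\mathcal{E}_X$ with $\mathcal{E}^\infty_X$ for regular holonomic systems, together with the identification of morphisms in $\BDC(X;p)$ with $H^0\mu hom$ \cite{KS90}. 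Full faithfulness alone suffices to reflect the isomorphism of objects; essential surjectivity is not needed. This input is true but substantially deeper, and you were right to flag it as the crux.

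\textbf{What each approach buys.} You get a stronger conclusion: an actual microlocal isomorphism $\mathcal{E}_X\otimes\SM\simeq(\mathcal{E}_X\otimes\mathcal{B}_{X_\alpha|X})^{\oplus m_\alpha}$. After that, your multiplicity count via $\mathrm{gr}\,\mathcal{B}_{X_\alpha|X}\simeq\SO_{T^\ast_{X_\alpha}X}$ is elementary and correct. The paper's approach gets only the equality of numbers, but from much lighter machinery.

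\textbf{On your fallback.} The vanishing-cycle route is not really more elementary. Its key claim is that $\mult_{T^\ast_{X_\alpha}X}(\SM)$ equals the rank of $\phi_g\SM$ for generic $g$. That is essentially Ginsburg's comparison of the characteristic cycle of $\SM$ with that of its de Rham complex, i.e.\ a form of the proposition itself.
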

\begin{proof}
Note that for a point 
\begin{equation}
p_\alpha \in T_{X_\alpha}^\ast X \setminus \bigcup_{\alpha^\prime \neq \alpha} 
\overline{T_{X_{\alpha^\prime}}^\ast X}
\end{equation}
we have an isomorphism $\Sol_X(\SM)[\codim_X X_\alpha] \simeq F[\codim_X 
X_\alpha -n]=F[-\dim X_\alpha] \simeq \CC_{X_\alpha}^{\oplus m_\alpha}$ 
in $\BDC(X;\{p_\alpha\})$. 
By a result of \cite{SKK} we have the vanishing 
\begin{equation}
H^j \mu_{X_\alpha}(\SO_X)=0 \quad (j\neq \codim_X X_\alpha).
\end{equation}
Moreover $\mathcal{C}_{X_\alpha |X}^{\RR} \coloneq H^{\codim_X X_\alpha} \mu_{X_\alpha}(\SO_X)$ 
on $T_{X_\alpha}^\ast X$ is called the sheaf of holomorphic microfunctions along $X_\alpha$. 
Then by \cite[Theorem3.2.1]{Kas83a} we can easily show that there exists an isomorphism 
\begin{equation}
\rhom_{\SD_X}(\SM, \mathcal{C}_{X_\alpha |X}^{\RR})_{p_\alpha} \simeq 
\left(\CC_{T_{X_\alpha}^\ast X}\right)_{p_\alpha}^{\oplus \mult_{T_{X_\alpha}^\ast X}(\SM)}.
\end{equation}
Moreover, there exist isomorphisms 
\begin{align}
\rhom_{\SD_X}(\SM, \mathcal{C}_{X_\alpha |X}^{\RR})_{p_\alpha} &\simeq 
\mu_{X_\alpha}(\rhom_{\SD_X}(\SM, \SO_X)[\codim_X X_\alpha])_{p_\alpha} \\
&\simeq \mu_{X_\alpha}(\CC_{X_\alpha}^{\oplus m_\alpha})_{p_\alpha} \\
&\simeq (\CC_{T_{X_\alpha}^\ast X})_{p_\alpha}^{\oplus m_\alpha}
\end{align}
where in the second (resp. third) isomorphism we used \cite[Corollary 5.4.10(i)]{KS90} 
(resp. \cite[Proposition 4.3.4]{KS90}). 
We thus obtain $\mult_{T_{X_\alpha}^\ast X} (\SM) =m_\alpha$. 
This completes the proof.
\end{proof}

Now we recall the following classical result. 

\begin{lemma}\label{lem-perversity}
Let $F$ be a perverse sheaf on a complex manifold $X$. 
Then for a complex analytic Whitney stratification $\mathcal{S}$ of $X$ the 
following two conditions are equivalent.
\begin{enumerate}
\item[\rm{(i)}] 
$\msupp(F) \subset \bigsqcup_{S\in \mathcal{S}} T_S^\ast X.$
\item[\rm{(ii)}] 
$F\in \Perv(\CC_X) \text{ is adapted to } \mathcal{S}$ i.e. 
$H^jF|_S$ is locally constant for any $S\in \mathcal{S}$ and $j\in \ZZ$.
\end{enumerate}
\end{lemma}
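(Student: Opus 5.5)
The plan is to prove both implications using the microlocal characterization of local constancy from \cite[Proposition 8.4.1]{KS90} (for $F\in \BDC(M)$ on a manifold $M$, the cohomology sheaves $H^jF$ are locally constant for all $j$ if and only if $\msupp(F)\subset T_M^\ast M$), together with the functorial estimates for micro-supports under restriction and pushforward, \cite[Propositions 5.4.4, 5.4.5 and 5.4.13]{KS90}. Perversity of $F$ will not really be needed; the argument works for any object of $\BDC_c(X)$.

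For (i) $\Rightarrow$ (ii), fix a stratum $S\in\mathcal{S}$ and let $i_S\colon S\longhookrightarrow X$ be the inclusion. The first step is to check that $i_S$ is non-characteristic for $\msupp(F)$ in the sense of \cite[Definition 5.4.12]{KS90}. Concretely, I need that every $(p,\xi)\in \msupp(F)$ with $p\in S$ and $\xi|_{T_pS}=0$ satisfies $\xi=0$ or $\xi \in T_S^\ast X$. This does not follow from (i) pointwise, because (i) only constrains $\msupp(F)$ itself, not its closure. The Whitney condition (a) handles this: if $\xi \in \overline{T_{S'}^\ast X}$ for some stratum $S'\neq S$ with $S\subset \overline{S'}$, then $\xi$ annihilates the limits of the tangent spaces $T S'$, and these limits contain $T_pS$. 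Hence $\overline{\bigsqcup_{S'}T^\ast_{S'}X}$ is a closed conic set meeting the conormal of $S$ only in conormal vectors. Granting this, \cite[Proposition 5.4.13]{KS90} gives $\msupp(F|_S)\subset i_S^{\#}(\msupp F)$, and the image of conormal vectors to $S$ in $T^\ast S$ is the zero section. So $\msupp(F|_S)\subset T_S^\ast S$, and \cite[Proposition 8.4.1]{KS90} shows that each $H^jF|_S$ is locally constant.

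The converse (ii) $\Rightarrow$ (i) is the main obstacle. Here I follow \cite[Proposition 8.4.1 and Theorem 8.4.2]{KS90} directly: an $\mathcal{S}$-constructible sheaf on a $\mu$-stratification has micro-support in $\bigsqcup_S T^\ast_SX$, and complex Whitney stratifications are $\mu$-stratifications (\cite[Remark 8.3.10]{KS90}). The proof goes by induction on the number of strata, working locally. Let $S$ be a stratum of minimal dimension near the point under consideration, and use the distinguished triangle
\begin{equation*}
j_!j^{-1}F\longrightarrow F\longrightarrow i_{S\ast}i_S^{-1}F\overset{+1}{\longrightarrow}.
\end{equation*}
Then:
\begin{enumerate}
\item[(a)] $i_S^{-1}F$ is locally constant on $S$, so $\msupp(i_{S\ast}i_S^{-1}F)\subset T_S^\ast X$.
\item[(b)] $j^{-1}F$ is adapted to the restricted stratification, so by induction its micro-support is contained in the union of the conormals of the remaining strata.
\end{enumerate}
The delicate point is controlling $\msupp(j_!j^{-1}F)$ over $S$. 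This is exactly where the Whitney (b) condition enters, through the estimate $\msupp(j_!G)\subset \msupp(G)\hat{+}N^\ast(U)$ from \cite[Theorem 6.3.1]{KS90}. Whitney (b) guarantees that the $\hat{+}$-sum over $S$ stays inside $\bigsqcup_{S'}T_{S'}^\ast X$. Combining (a), (b) and this bound through the triangle inequality for micro-supports \cite[Proposition 5.1.3]{KS90} gives (i).
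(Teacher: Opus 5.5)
You have a genuine gap in (i)$\Rightarrow$(ii): the inclusion $i_S\colon S\hookrightarrow X$ is in general \emph{not} non-characteristic for $\msupp(F)$. By \cite[Definition 5.4.12]{KS90}, non-characteristic means $\msupp(F)\cap T^\ast_SX\subset T^\ast_XX$, i.e.\ $\msupp(F)$ contains no nonzero covector conormal to $S$. The ``concrete'' condition you wrote (``$\xi=0$ or $\xi\in T^\ast_SX$'') is a tautology, not this condition. Under (i), every covector of $\msupp(F)$ over $S$ already lies in $T^\ast_SX$, because over points of $S$ the set $\bigsqcup_{S'}T^\ast_{S'}X$ is just $T^\ast_SX$. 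So $i_S$ is non-characteristic only in the degenerate case $\msupp(F)\cap\pi^{-1}(S)\subset T^\ast_XX$. Already $F=\CC_{\{0\}}$ with $\{0\}\in\mathcal{S}$ satisfies (i) while $i_{\{0\}}$ is characteristic. Your concern about closures is beside the point: $\msupp(F)$ is closed, and Whitney (a) only shows that $\bigsqcup_{S'}T^\ast_{S'}X$ is closed. Hence \cite[Proposition 5.4.13]{KS90} cannot be invoked. What holds unconditionally is $\msupp(i_S^{-1}F)\subset i_S^{\#}(\msupp(F))$ \cite[Corollary 6.4.4]{KS90}. The inclusion you then need, $i_S^{\#}(\bigsqcup_{S'}T^\ast_{S'}X)\subset T^\ast_SS$, is exactly the $\mu$-condition. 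That condition uses Whitney (b), through its complex-analytic strengthening to Verdier's (w), and not (a).

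Your sketch of (ii)$\Rightarrow$(i) has a parallel flaw. Since $S$ is a submanifold of positive codimension, the strict normal cone of $U=X\setminus S$ is empty at points of $S$. So $N^\ast(U)$ is the whole fiber of $T^\ast X$ there, and the bound $\msupp(j_!G)\subset\msupp(G)\hat{+}N^\ast(U)$ gives no information over $S$; no Whitney condition can make it ``stay inside'' $\bigsqcup_{S'}T^\ast_{S'}X$. What actually carries your argument is the black box \cite[Proposition 8.4.1]{KS90}, which for a $\mu$-stratification is an equivalence and yields both directions at once. It must be combined with the fact that complex analytic Whitney stratifications are $\mu$-stratifications. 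That fact is the real geometric content and must be justified explicitly, e.g.\ by Teissier's theorem that (b) is equivalent to (w) for complex analytic stratifications, plus the elementary implication (w)$\Rightarrow\mu$; it is special to the complex analytic case.

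Once this is supplied, your route is a legitimate alternative to the paper's. The paper goes through the Riemann--Hilbert correspondence and $\ch\SM=\msupp(F)$ \cite[Theorem 10.1.1]{KS85}, then uses \cite[Proposition 4.6.1]{HTT08} for (i)$\Rightarrow$(ii) and \cite[Proposition 4.2.3]{BMM94} for (ii)$\Rightarrow$(i). Your version works for every object of $\BDC_c(X)$ without perversity, whereas the paper delegates the same geometric input to the Brian\c con--Maisonobe--Merle theorem on $\SD$-modules.
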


\begin{proof}
By the Riemann-Hilbert correspondence, there exists a regular 
holonomic $\SD_X$-modules $\SM$ such that 
\begin{equation}
\rhom_{\SD_X}(\SM, \SO_X)[\dim X] \simeq F.
\end{equation}
Moreover by \cite[Theorem 10.1.1]{KS85} the characteristic variety $\ch(\SM) \subset T^\ast X$ 
of $\SM$ is equal to $\msupp(F) \subset T^\ast X$. Then by \cite[Proposition 4.6.1]{HTT08} 
we see that the condition (i) implies the one (ii). 
By \cite[Proposition 4.2.3]{BMM94} the converse also holds true. 
\end{proof}

\begin{proposition}\label{prop-adapt}
Let $X$ be a complex manifold and $\SF$ a perverse sheaf on $X\times \CC$. 
Let $\mathcal{S}$ be a complex analytic Whitney stratification of $X\times \CC$ 
adapted to $\SF$ such that $X\times \{0\} \subset X\times \CC$ is a union of some strata 
in it and set $\mathcal{S}_0 \coloneq \{S\in \mathcal{S} 
\mid S\subset X\times \{0\} \} \subset \mathcal{S}$. 
Then for the projection $t\colon X\times \CC \longrightarrow \CC$ the perverse sheaf 
$F\coloneq \psi_t(\SF)[-1] \in \Perv(\CC_X)$ on $X$ is adapted to $\mathcal{S}_0$.
\end{proposition}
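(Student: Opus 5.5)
The plan is to use Lemma \ref{lem-perversity}: adaptedness of the perverse sheaf $F=\psi_t(\SF)[-1]$ to $\mathcal{S}_0$ follows from the micro-support estimate
\begin{equation}
\msupp(F)\subset \bigsqcup_{S\in \mathcal{S}_0} T_S^\ast X ,
\end{equation}
where we regard each $S\in\mathcal{S}_0$ as a submanifold of $X\simeq X\times\{0\}$. Since $\mathcal{S}_0$ is a Whitney stratification of $X$ (the restriction of a Whitney stratification to a union of strata is again Whitney), it is enough to prove this micro-support bound.

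\textbf{Step 1.} Since $\mathcal{S}$ is adapted to $\SF$, Lemma \ref{lem-perversity} gives $\msupp(\SF)\subset \bigsqcup_{S\in\mathcal{S}} T_S^\ast(X\times\CC)$.

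\textbf{Step 2.} Apply the standard estimate for the micro-support of nearby cycles (see \cite[Proposition 8.6.3]{KS90} or the analogous statement for $\psi_t$). It says that $\msupp(\psi_t\SF)$ is contained in the set of limits $\lim_{\nu}\bigl(\xi_\nu - c_\nu\, dt(x_\nu)\bigr)$, where $(x_\nu;\xi_\nu)\in \msupp(\SF)$ with $t(x_\nu)\neq 0$, $x_\nu\to x\in X\times\{0\}$ and $c_\nu\in\CC$. This limit is computed in the appropriate sense: we project to $T^\ast X$ by forgetting the $dt$-component, which is killed by the choice of $c_\nu$. Concretely, $\msupp(\psi_t\SF)$ is contained in the image in $T^\ast X$ of the limits of the $T^\ast_{S}(X\times\CC)$ restricted to $t\neq 0$, taken over points of $X\times\{0\}$. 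Such limits are controlled by the Thom $a_t$-type condition.

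\textbf{Step 3.} This is the main obstacle. We must show that for a stratum $S'\not\subset X\times\{0\}$ whose closure meets a stratum $S\in\mathcal{S}_0$ at $x$, every limit of covectors in $T^\ast_{S'}(X\times\CC)$ modulo $dt$ is conormal to $S$ inside $X$. Equivalently, every limit of the tangent spaces $T\bigl(S'\cap t^{-1}(t_\nu)\bigr)$ contains $T_xS$. This is exactly the Thom $a_t$ condition for the pair $(S',S)$ relative to $t$. I would derive it from the Whitney conditions as follows. The function $t$ is the restriction of a linear coordinate, and $S\subset t^{-1}(0)$. Whitney (b) for $(S',S)$ implies $a_t$ for functions that vanish on $S$ and are submersive on $S'$; this is Briançon--Maisonobe--Merle, or the Hironaka/Lê result for strata with $t|_{S'}$ submersive. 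If $t|_{S'}$ is not submersive, then $S'$ lies in a fiber near $x$, and since $S'\not\subset t^{-1}(0)$ this cannot happen near points of $t^{-1}(0)$ by continuity. Alternatively, one can avoid $a_t$ entirely. The micro-support of $\SF$ is a conic Lagrangian $\CC^\ast$-stable analytic set, and \cite[Proposition 8.6.3]{KS90} then yields the bound
\begin{equation}
\msupp(\psi_t\SF)\subset \bigl(\msupp(\SF)\,\widehat{+}\,T^\ast_{X\times\{0\}}(X\times\CC)\bigr)\cap\{t=0\},
\end{equation}
and by Whitney (b) the set $\msupp(\SF)\widehat{+}T^\ast_{X\times\{0\}}(X\times\CC)$ is contained in $\bigsqcup_{S} T^\ast_S(X\times\CC)+T^\ast_{X\times\{0\}}(X\times\CC)$. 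Restricting this to $X\times\{0\}$ gives a subset of $\bigsqcup_{S\in\mathcal{S}_0}T^\ast_S X$, because $T^\ast_S(X\times\CC)|_{X\times\{0\}}=T^\ast_SX\oplus \CC\,dt$ for $S\in\mathcal{S}_0$.

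\textbf{Step 4.} Conclude with Lemma \ref{lem-perversity} for the perverse sheaf $F$ on $X$. As an alternative sanity check, one can also argue directly that $\psi_t\SF$ is constructible with respect to $\mathcal{S}_0$ by the Thom--Mather first isotopy lemma applied to the stratified map $t$ near $t^{-1}(0)$, with $\mathcal{S}$ refined to satisfy $a_t$.
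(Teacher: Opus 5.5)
Your main line is correct and is essentially the paper's proof. You reduce via Lemma~\ref{lem-perversity} to $\msupp(F)\subset\bigsqcup_{S\in\mathcal{S}_0}T_S^\ast X$, bound $\msupp(\psi_t\SF)$ by the limits as $\tau\to0$ of the relative conormals $T^\ast_{S'\cap(X\times\{\tau\})}(X\times\{\tau\})$, and control these limits by Thom's $a_t$ condition from \cite[Th\'eor\`eme 4.2.1]{BMM94}.

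The only real difference is in the middle step. The paper gets that bound from \cite[Theorem 4.7]{FKT26}, i.e.\ $\CCyc(\psi_t\SF)=\lim_{\tau\to0}\CCyc(\SF|_{X\times\{\tau\}})$, combined with $\msupp=\supp\CCyc$ for perverse sheaves. You instead use a general micro-support estimate for nearby cycles, which needs no perversity.

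Your side remarks are looser than the main argument, though none of them is needed:
\begin{itemize}
\item $t|_{S'}$ has no critical points with $t\neq0$ near $X\times\{0\}$ because such points cannot accumulate there (curve selection). It is not because $S'$ would lie in a fiber.
\item The $\widehat{+}$ variant is not an immediate consequence of Whitney (b).
\item Refining $\mathcal{S}$ in Step 4 would only give adaptedness to a refinement of $\mathcal{S}_0$, which is exactly what \cite{BMM94} lets you avoid.
\end{itemize}
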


\begin{proof}
By lemma \ref{lem-perversity} it suffices to show the inclusion 
\begin{equation}\label{eq-msupp}
\msupp(F) \subset \bigsqcup_{S\in \mathcal{S}_0} T_S^\ast X.
\end{equation}
The problem being local, we may assume that for any $\tau \in \CC^\ast$ such that 
$0<\abs{\tau} \ll 1$ the complex hypersurface $t^{-1}(\tau) =X\times 
\{\tau\} \subset X\times \CC$ 
intersects any stratum $S\in \mathcal{S}$ in $\mathcal{S}$ 
such that $S\subset X\times \CC^\ast \ (\Longleftrightarrow S\in \mathcal{S}\setminus \mathcal{S}_0)$ 
transversally. 
Then for $\tau \in \CC^\ast$ such that $0<\abs{\tau} \ll 1$ 
we obtain 
\begin{equation}
\msupp(\SF|_{X\times \{\tau\}})  \subset \bigsqcup_{S\in 
\mathcal{S}\setminus \mathcal{S}_0} T_{S\cap (X\times \{\tau\})}^\ast (X\times \{\tau\}).
\end{equation}
Since by \cite[Theorem 4.7]{FKT26} we have 
\begin{equation}
\CCyc(\psi_t(\SF)) =\lim_{\tau \to 0}\CCyc(\SF|_{X\times \{\tau\}}),
\end{equation}
this implies that 
\begin{align}
\msupp(F)&=\supp \CCyc(\psi_t(\SF)) \subset \overline{\bigcup_{\tau 
\in \CC^\ast, 0<\abs{\tau} \ll 1}  
\msupp(\SF|_{X\times \{\tau\}}) \times \{\tau\} }\\ 
&\subset 
\bigsqcup_{S\in \mathcal{S}\setminus \mathcal{S}_0} 
\overline{\bigcup_{\tau \in \CC^\ast, 0<\abs{\tau} \ll 1}
T_{S\cap (X\times \{\tau\})}^\ast (X\times \{\tau\} ) \times \{\tau\} },
\end{align}
where the closures are taken in the relative cotangent bundle 
$T^\ast(X\times \CC/\CC) \simeq T^\ast X \times \CC$ 
associated to the projection $t\colon X\times \CC \longrightarrow \CC$. 
On the other hand, by \cite[Th\'eor\`eme 4.2.1]{BMM94} for any 
$S\in \mathcal{S} \setminus \mathcal{S}_0$ 
there exists an inclusion 
\begin{equation}
t^{-1}(0) \times_{X\times \CC} T^\ast (X\times \CC) \cap 
\overline{\bigcup_{\tau \in \CC^\ast, 0<\abs{\tau} \ll 1}T_{S\cap (X\times \{\tau\})}^\ast 
(X\times \CC )} \subset \bigsqcup_{S\in \mathcal{S}_0} T_S^\ast (X\times \CC).   
\end{equation}
Then we obtain the desired inclusion (\ref{eq-msupp}).
\end{proof}

\section{Some auxiliary results for constructible sheaves}\label{sec-pre}
In this section, we prove some formulas 
for real and complex constructible sheaves, 
which will be used to describe their stalks 
and hyper cohomology groups in subsequent sections.
Let $X$ and $Y$ be real analytic manifolds and 
$F\in \BDC_{\RR -c}(X\times Y)$ an $\RR$-constructible sheaf 
on $X\times Y$.
For a point $(x,y)\in X\times Y$ and $0<\e, \delta \ll 1$ 
let $B(x;\e)\subset X$ (resp. $B(y;\delta)\subset Y$) be 
the open ball in $X$ (resp. $Y$) centered at $x\in X$ 
(resp. $y\in Y$) with radius $\e >0$ (resp. $\delta >0$).
Then our first objective here is to describe the set of 
points $(\e, \delta)\in \RR^2$ ($0< \e, \delta \ll 1$) for which 
we have an isomorphism 
\begin{equation}
\rsect (B(x;\e)\times B(y;\delta); F) \simto F_{(x,y)}.
\end{equation}
The problem being local, we may assume that $X$ and $Y$ are 
$\RR^n$ and $\RR^m$ respectively and $x=0 \in X=\RR^n$, 
$y=0\in Y=\RR^m$.
We may assume also that the support of $F$ is compact.
Let $\mathcal{S}$ be a subanalytic Whitney stratification 
of $X\times Y=\RR^{n+m}$ adapted to $F\in \BDC_{\RR -c}(X\times Y)$
and define a real analytic function 
$\varphi \colon X\times Y \longrightarrow \RR$ 
(resp. $\psi \colon X\times Y \longrightarrow \RR$) 
by $\varphi (x,y)\coloneq \| x\|^2=x_1^2+\cdots +x_n^2$ 
(resp. $\psi (x,y)\coloneq \| y\|^2=y_1^2+\cdots +y_m^2$).
Then by the microlocal Bertini-Sard theorem \cite[Proposition 8.3.12]{KS90} 
there exists $0<e_0 \ll 1$ such that for any $0<e\leq e_0$ and 
$S \in \mathcal{S}$ we have the transversality 
\begin{equation}
\varphi^{-1}(e) \pitchfork S.
\end{equation}
Using this transversality, for each $0<e\leq e_0$ 
we obtain naturally a subanalytic Whitney stratification 
of $\varphi^{-1}([0,e])=\{ (x,y)\in X\times Y \mid \|x\|^2 \leq e\}
=\overline{B(0;\sqrt{e})} \times Y \subset X\times Y$ adapted to 
$F_{\varphi^{-1}([0,e])} \in \BDC_{\RR -c}(X\times Y)$
and denote it by $\mathcal{S}_e$.
For $0<e\leq e_0$ we define a closed conic subanalytic Lagrangian 
subset $\Lambda_e \subset T^\ast (X\times Y)$ of $T^\ast(X\times Y)$
by 
\begin{equation}
\Lambda_e \coloneq \bigsqcup_{S\in \mathcal{S}_e}T_S^\ast(X\times Y) \quad \subset T^\ast(X\times Y).
\end{equation}
Then by applying the microlocal Bertini-Sard theorem 
\cite[Proposition 8.3.12]{KS90} to $\Lambda_e \subset T^\ast(X\times Y)$ 
and the real analytic function $\psi \colon X\times Y \longrightarrow
\RR$ we obtain a discrete subanalytic subset $D_e \subset \RR_{>0}$ 
such that for $d> 0$ we have an equivalence 
\begin{equation}
d\not \in  D_e \quad \Longleftrightarrow \quad \psi^{-1}(d) \pitchfork S \quad 
\text{for any $S\in \mathcal{S}_e$}.
\end{equation}
This in particular implies that for any $0<e\leq e_0$ and 
$d>0$ such that $d\not \in D_e$ the submanifold 
\begin{equation}
\varphi^{-1}(e) \cap \psi^{-1}(d) =\partial B(0;\sqrt{e}) \times 
\partial B(0;\sqrt{d}) \quad \subset X \times Y
\end{equation}
of $X\times Y$ intersects any stratum $S\in \mathcal{S}$ transversally.
Now we define a subset $D\subset \RR^2$ of $\RR^2$ by 
\begin{equation}
D\coloneq \bigsqcup_{0<e\leq e_0}(\{e\} \times D_e) \quad \subset \RR^2.
\end{equation}
\begin{lemma}\label{lem-subanl}
The subset $D\subset \RR^2$ is subanalytic in $\RR^2$ and 
has dimension $\leq 1$.
\end{lemma}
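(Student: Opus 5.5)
We plan to write $D$ as the set of critical values of a subanalytic map and then invoke the fact that subanalytic sets are stable under proper images. Set
\begin{equation}
Z\coloneq \Set{(e,d)\in (0,e_0]\times \RR_{>0}}{\varphi^{-1}(e)\cap \psi^{-1}(d) \ \text{is not transversal to some } S\in \mathcal{S}}.
\end{equation}
The first step is to check that $D=Z$. By construction of $\mathcal{S}_e$, the strata of $\mathcal{S}_e$ are of three kinds: the open parts $S\cap \varphi^{-1}([0,e))$, the boundary parts $S\cap \varphi^{-1}(e)$, and the strata lying outside $\varphi^{-1}([0,e])$. The latter are irrelevant, since they are not in $\varphi^{-1}([0,e])$. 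Hence $d\in D_e$ holds exactly when $\psi^{-1}(d)$ fails to be transversal either to some $S\cap\{\varphi<e\}$ or to some $S\cap \varphi^{-1}(e)$. We therefore expect
\begin{equation}
D=\Set{(e,d)}{\exists S\in \mathcal{S},\ \exists p\in S,\ \varphi(p)\le e,\ \psi(p)=d,\ \text{and a corresponding non-transversality holds at } p}.
\end{equation}
This is a union of two sets. The first is $\{(e,d)\mid \exists p\in S,\ \varphi(p)<e,\ \psi(p)=d,\ d\psi(p)|_{T_pS}=0\}$. The second is the image under $(\varphi,\psi)$ of the critical locus of $(\varphi,\psi)|_S$.

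Next we establish subanalyticity. For each stratum $S$ the sets
\begin{equation}
C_S\coloneq \Set{p\in S}{d\psi(p)|_{T_pS}=0},\qquad C'_S\coloneq \Set{p\in S}{d\varphi(p)|_{T_pS}\wedge d\psi(p)|_{T_pS}=0}
\end{equation}
are subanalytic. This holds because the conormal bundle $T^\ast_S(X\times Y)$ is subanalytic and the conditions can be written as $d\psi(p)\in T_S^\ast$, respectively as $d\varphi(p)$ and $d\psi(p)$ being linearly dependent modulo $T_S^\ast$. Since $F$ has compact support, only finitely many strata occur and everything is relatively compact. We may thus apply Gabrielov/Hironaka's theorem on proper images, through the map $p\mapsto (\varphi(p),\psi(p))$ combined with the parameter $e$. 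Concretely, $\{(e,p)\mid p\in C_S,\ \varphi(p)<e\le e_0\}$ is subanalytic and has relatively compact fibres, so its image under $(e,p)\mapsto (e,\psi(p))$ is subanalytic. The same argument applies to $C'_S$ mapped by $(\varphi,\psi)$. The set $D$ is a finite union of such images and is therefore subanalytic.

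The last step is the dimension bound. For each fixed $e$ the fibre $D_e$ is discrete by the microlocal Bertini–Sard theorem, which gives $\dim D_e\le 0$. A subanalytic set in $\RR^2$ whose fibres over the $e$-axis all have dimension $\le 0$ has dimension $\le 1$. To see this, take a subanalytic stratification compatible with the projection to $e$: a $2$-dimensional cell would contain an open set, whose fibres are open intervals and not discrete. As an alternative route, the images of $C'_S$ under $(\varphi,\psi)$ are sets of critical values and have dimension $\le 1$ by subanalytic Sard.

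The main obstacle is the first step, namely matching precisely the transversality conditions for the strata of $\mathcal{S}_e$ with conditions on the original strata $S$, so that $D$ is genuinely a definable set with $e$ as a parameter. In particular the boundary strata $S\cap \varphi^{-1}(e)$ vary with $e$, and their non-transversality has to be expressed as the rank-drop condition defining $C'_S$ above.
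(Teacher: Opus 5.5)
Your argument is correct, but it takes a different route from the paper.

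\textbf{The paper's route.} The paper never unpacks $\Lambda_e$. It assembles all of them into one set $\Xi=\bigsqcup_{0<e\le e_0}\Lambda_e\times\{e\}$ in the relative cotangent bundle $T^\ast(X\times Y\times\RR/\RR)$, built from $\widetilde{S_1}=(S\times\RR_{>0})\cap\mathrm{Int}K$ and $\widetilde{S_2}=(S\times\RR_{>0})\cap\partial K$. It imports the subanalyticity of $\Xi$ from \cite[Appendix B]{FKT26}, intersects with the graph $\Xi'$ of $d\psi$, and obtains $D$ as the image of $\Xi\cap\Xi'$ under $\Phi=(p,\psi\circ\pi)$. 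That map is proper on $\overline{\Xi\cap\Xi'}$.

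\textbf{Your route.} You write the two pieces of $\Lambda_e$ out explicitly. The strata $S\cap\{\varphi<e\}$ contribute $\{(e,\psi(p))\mid p\in C_S,\ \varphi(p)<e\le e_0\}$, and the strata $S\cap\varphi^{-1}(e)$ contribute $(\varphi,\psi)(C'_S)$. This needs only the subanalyticity of a single conormal bundle $T^\ast_S(X\times Y)$ and the proper mapping theorem.

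\textbf{What each buys.} Your version is more elementary and self-contained. It also makes the shape of $D$ visible: horizontal half-lines at critical values of $\psi|_S$, together with critical values of $(\varphi,\psi)|_S$. Those two pieces are exactly what the later transversality of $B(0;\e)\times\partial B(0;\delta)$ and of $\partial B(0;\e)\times\partial B(0;\delta)$ encodes. The paper's packaging is a uniform device that it reuses for other families of conormal sets (Proposition \ref{prop-adapt}, Appendix \ref{ap-A}). The dimension argument is the same in both.

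\textbf{Things to repair.}

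Your opening claim $D=Z$ is false. Since $\varphi^{-1}(e)\cap\psi^{-1}(d)$ is transversal to $S$ at $p$ iff $(d\varphi(p),d\psi(p))|_{T_pS}$ has rank $2$, your $Z$ is exactly the rank-drop part $\bigcup_S(\varphi,\psi)(C'_S)$. It misses the interior contribution. For example, a point stratum $\{p\}$ puts $(e,\psi(p))$ into $D$ for every $e\in(\varphi(p),e_0]$, and these points need not lie in $Z$. The decomposition into the interior and boundary sets that you write next is the correct statement, and it is what your argument actually uses, so simply drop $Z$.

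The equivalence between non-transversality of $\psi^{-1}(d)$ to $S\cap\varphi^{-1}(e)$ and $d\varphi|_{T_pS}\wedge d\psi|_{T_pS}=0$ uses $d\varphi(p)|_{T_pS}\neq 0$. That is, it uses $\varphi^{-1}(e)\pitchfork S$ for $0<e\le e_0$, which you should cite.

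Finally, relatively compact fibres are not the hypothesis of the proper mapping theorem, and compactness of $\supp F$ is not what you need, since strata of $\mathcal{S}$ need not be bounded. What holds is that $(e,p)\mapsto(e,\psi(p))$ and $(\varphi,\psi)$ are proper on the closures of your sets. This is because $\|x\|^2\le e\le e_0$ there and $\psi=\|y\|^2$ is a component of the map.

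With these adjustments the proof is complete.
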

\begin{proof}
We first define a locally closed subanalytic subset $K\subset X\times Y\times \RR$ 
of $X\times Y\times \RR$ by 
\begin{equation}
K\coloneq \{ (x,y,e) \mid 0<e\leq e_0, \quad \varphi(x,y) =\|x\|^2 \leq e\}.
\end{equation}
Note that for $0<e\leq e_0$ we have 
\begin{equation}
K\cap (X\times Y\times \{e\}) \simeq \overline{B(0;\sqrt{e})} \times Y.
\end{equation}
Next for a stratum $S\in \mathcal{S}$ in $\mathcal{S}$ we set 
\begin{equation}
\begin{cases}
\widetilde{S_1} \coloneq (S\times \RR_{>0}) \cap \mathrm{Int} K, \\
\widetilde{S_2} \coloneq (S\times \RR_{>0}) \cap \partial K.
\end{cases}
\end{equation}
Then $\widetilde{S_1}, \widetilde{S_2} \subset X\times Y\times \RR$
are locally closed subanalytic submanifolds of $X\times Y\times \RR$.
Let $p\colon X\times Y\times \RR \longrightarrow \RR$ 
be the projection and $T^\ast(X\times Y\times \RR /\RR) \simeq T^\ast(X\times Y) \times \RR$ 
the relative cotangent bundle associated to it.
For $S\in \mathcal{S}$ we then define subsets 
$\Xi_{\widetilde{S_i}} \subset T^\ast(X\times Y\times \RR/\RR)$ 
($i=1,2$) in it by
\begin{equation}
\Xi_{\widetilde{S_i}} \coloneq \bigsqcup_{0<e\leq e_0} \left[ T_{\widetilde{S_i} \cap(X\times Y\times \{e\})}^\ast (X\times Y\times \{e\}) \times \{e\}\right]
\quad \subset T^\ast(X\times Y\times \RR/\RR).
\end{equation}
Note that for $S\in \mathcal{S}$ and $0<e\leq e_0$ we have 
\begin{equation}
\widetilde{S_i} \cap (X\times Y\times \{e\}) \simeq 
\begin{cases}
S\cap B(0;\sqrt{e}) \quad & (i=1), \\
S\cap \partial B(0;\sqrt{e}) \quad & (i=2)
\end{cases}
\end{equation}
and hence for $0<e\leq e_0$ the subset 
\begin{equation}
\bigsqcup_{S\in \mathcal{S}} \bigsqcup_{i=1,2} T_{\widetilde{S_i}\cap (X\times Y\times \{e\})}(X\times Y\times \{e\}) \quad \subset T^\ast(X\times Y\times \{e\})
\end{equation}
of $T^\ast(X\times Y\times \{e\}) \simeq T^\ast(X\times Y)$ 
is nothing but the closed conic subanalytic Lagrangian subset 
$\Lambda_e \subset T^\ast(X\times Y)$.
This implies that 
\begin{equation}
\Xi \coloneq \bigsqcup_{S\in \mathcal{S}} \bigsqcup_{i=1,2} \Xi_{\widetilde{S_i}} 
= \bigsqcup_{0<e\leq e_0}(\Lambda_e \times \{e\}) \quad \subset T^\ast(X\times Y\times \RR /\RR)
\end{equation}
is a family of closed conic subanalytic Lagrangian subsets 
$\Lambda_e \subset T^\ast(X\times Y)$ 
parametrized by $0<e\leq e_0$.
By the proof of \cite[Lemma 4.6]{FKT26} in \cite[Appendix B]{FKT26}, 
we see that $\Xi \subset T^\ast(X\times Y\times \RR/\RR)$ 
is subanalytic in $T^\ast(X\times Y\times \RR/\RR)$.
We define also a real analytic submanifold $\Xi^\prime \subset 
T^\ast(X\times Y\times \RR/\RR) \simeq T^\ast(X\times Y) \times \RR$ 
by 
\begin{equation}
\Xi^\prime \coloneq \{ (x,y,d\psi(x,y)) \mid (x,y)\in X\times Y\} \times \RR \simeq X\times Y\times \RR.
\end{equation} 
Let $\pi \colon T^\ast(X\times Y\times \RR/\RR) \simeq T^\ast(X\times Y) \times \RR 
\longrightarrow X\times Y$
be the projection, set $q\coloneq \psi \circ \pi \colon T^\ast(X\times Y\times \RR/\RR) \longrightarrow \RR$ 
and consider the morphism $\Phi \coloneq (p,q)\colon T^\ast(X\times Y\times \RR/\RR) \longrightarrow \RR^2$
of real analytic manifolds defined by it. 
Then it is easy to see that $\Phi$ is proper on $\overline{\Xi \cap \Xi^\prime} 
\subset T^\ast(X\times Y \times \RR/\RR)$ and 
\begin{equation}
\Phi (\Xi \cap \Xi^\prime) = D =\bigsqcup_{0<e\leq e_0}(\{ e\}\times D_e) \quad \subset \RR^2.
\end{equation}
We thus see that $D$ is subanalytic in $\RR^2$.
As $D_e\subset \RR$ is discrete for any $0<e\leq e_0$, 
it is also clear that $\dim{D} \leq1$.
\end{proof}
By the map $\Psi \colon \RR^2 \longrightarrow \RR^2 ((\e, \delta) \mapsto (\e^2, \delta^2))$
we define a subset $E\subset \RR^2$ by 
\begin{equation}
E\coloneq \Psi^{-1}(D) \cap \RR_{>0}^2 \quad \subset \RR^2.
\end{equation}
We set $\e_0 \coloneq \sqrt{e_0}$. 
Then by Lemma \ref{lem-subanl} we see that $E$ is subanalytic 
in $\RR^2$ and has dimension $\leq 1$.
Moreover, by our construction of $E$, for any $0<\e \leq \e_0$ 
and $\delta >0$ such that $(\e, \delta) \not \in E$ 
the three submanifolds 
\begin{equation}
\partial B(0;\e) \times \partial B(0;\delta), \quad 
\partial B(0;\e) \times  B(0;\delta), \quad 
B(0;\e) \times \partial B(0;\delta)
\end{equation}
of $X\times Y$ intersects any stratum $S\in \mathcal{S}$ in 
$\mathcal{S}$ transversally. 
\begin{lemma}\label{lem-dir}
There exist $0< \e_1 \ll \e_0$ and $\lambda>0$ such that 
for the subset $\ell_\lambda \coloneq \{ (\e, \lambda \e) \mid 0<\e \leq \e_1\} \subset \RR_{>0}^2$
we have $\ell_\lambda \cap E = \varnothing$.
\end{lemma}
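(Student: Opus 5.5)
The plan is to exploit that $E$ is a subanalytic subset of $\RR^2$ of dimension $\leq 1$, which is guaranteed by Lemma \ref{lem-subanl} together with the subanalyticity of $\Psi$. The point is that the germ of such a set at the origin $(0,0)\in \RR^2$ has only finitely many "tangent directions". The ray $\ell_\lambda$ tends to the origin with the fixed slope $\lambda$. So it suffices to pick $\lambda>0$ different from all slopes with which $E$ can approach $(0,0)$.

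First I localize. Since $E$ is subanalytic, so is its closure $\overline{E}$, and $\dim \overline{E}\leq 1$. By the local finiteness of the structure of subanalytic sets, there is $r>0$ such that $\overline{E}\cap B((0,0);r)$ is a finite union of two kinds of pieces.
\begin{enumerate}
\item[(a)] Possibly the point $(0,0)$ itself.
\item[(b)] Finitely many half-branches $\Gamma_1,\ldots,\Gamma_k$, each a connected $1$-dimensional subanalytic arc with $(0,0)$ in its closure.
\end{enumerate}
Isolated points and arcs of $\overline{E}$ not accumulating at the origin are removed by shrinking $r$. This is the standard local conic structure, or curve selection, for $1$-dimensional subanalytic germs. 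Any point of $E$ lying on $\ell_\lambda$ with $0<\e\leq \e_1$ has distance $\leq \e_1\sqrt{1+\lambda^2}$ from the origin. So if we choose $\e_1$ small compared to $r$, such a point must lie on some $\Gamma_i$.

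Next I parametrize each half-branch. By the Puiseux-type theorem for $1$-dimensional subanalytic germs, each $\Gamma_i$ admits a real analytic parametrization $\gamma_i(t)=(\e_i(t),\delta_i(t))$ for $0<t<t_0$, with $\gamma_i(t)\to (0,0)$ as $t\to 0$. Here $\e_i,\delta_i$ are convergent power series in $t^{1/N}$, and both are positive on $\Gamma_i\cap\RR^2_{>0}$. Hence the ratio $\delta_i(t)/\e_i(t)$ is a convergent Laurent–Puiseux series, and it has a limit $\lambda_i\in[0,+\infty]$ as $t\to 0$. I then choose
\begin{equation}
\lambda\in \RR_{>0}\setminus\{\lambda_1,\ldots,\lambda_k\},
\end{equation}
which is possible since this excludes only finitely many values. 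For each $i$ there is then $t_i>0$ such that $\delta_i(t)/\e_i(t)\neq \lambda$ for $0<t<t_i$. Indeed, the ratio converges to $\lambda_i\neq\lambda$, so it stays in a neighborhood of $\lambda_i$ not containing $\lambda$.

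Finally I shrink $\e_1$ so that $\ell_\lambda$ meets $\Gamma_i$ only at points $\gamma_i(t)$ with $t<t_i$. This is possible because $\gamma_i$ extends continuously to $t=0$ with $\gamma_i(0)=(0,0)$, so the part of $\Gamma_i$ with $t\geq t_i$ has positive distance from the origin. On the remaining part of $\Gamma_i$ the slope differs from $\lambda$, so $\ell_\lambda\cap\Gamma_i=\varnothing$ for all $i$, and therefore $\ell_\lambda\cap E=\varnothing$.

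The main obstacle is making the local structure step rigorous: the finiteness of half-branches at the origin and the existence of the Puiseux-type parametrization for a subanalytic (not merely semialgebraic) set of dimension $\leq 1$. I would quote the standard results on $1$-dimensional subanalytic sets, namely the curve selection lemma and the fact that a $1$-dimensional subanalytic germ is a finite union of analytic half-branches. An alternative is to apply the subanalytic map $(\e,\delta)\mapsto \delta/\e$ to $E\cap B((0,0);r)$. By the same structure theory, the set of accumulation values of this map at the origin is a finite subset of $[0,+\infty]$, and $\lambda$ is chosen outside it.
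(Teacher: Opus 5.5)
Your argument is correct and is essentially the paper's. The paper encodes the finiteness of limiting directions at the origin as the fact that the normal cone $C_{\{0\}}(E)\subset T_{\{0\}}\RR^2\simeq\RR^2$ is a closed subanalytic cone of dimension $\leq 1$, hence a finite union of half-lines, and then chooses $(1,\lambda)$ off these half-lines. You obtain the same finite set of slopes $\lambda_1,\ldots,\lambda_k$ more explicitly, from the half-branch decomposition and Puiseux expansions, which spells out what the paper leaves as \emph{immediately follows}; your closing alternative via $(\e,\delta)\mapsto\delta/\e$ is the version closest to the paper's normal-cone formulation.
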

\begin{proof}
Let $C_{\{0\}}(E) \subset T_{\{0\}} \RR^2 \simeq \RR^2$ be the 
normal cone of $E\subset \RR^2$ along the submanifold $\{0\} \subset \RR^2$.
Then $C_{\{0\}}(E)$ is subanalytic and has dimension $\leq 1$. 
From this the assertion immediately follows.
\end{proof}
By this lemma, the non-characteristic deformation lemma \cite[Proposition 2.7.2]{KS90} 
and \cite[Corollary 5.4.9]{KS90} we thus obtain the following result.
\begin{proposition}\label{prop-stalk}
Let $0<\e_1 \ll \e_0$ and $\lambda>0$ be as in Lemma \ref{lem-dir}. 
Then for any $0<\e \leq \e_1$ we have an isomorphism
\begin{equation}
\rsect(B(0;\e)\times B(0;\lambda \e);F) \simeq F_{(0,0)}.
\end{equation}
\end{proposition}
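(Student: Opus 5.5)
Fix $0<\e\leq \e_1$. The plan is to shrink the product neighborhood $U_t\coloneq B(0;t)\times B(0;\lambda t)$ $(0<t\leq \e)$ down to the point $(0,0)$ along the ray $\ell_\lambda$, and to show that no cohomology is lost along the way. Concretely, I would prove that the restriction maps
\begin{equation*}
\rsect(U_t;F)\longrightarrow \rsect(U_s;F) \quad (0<s\leq t\leq \e)
\end{equation*}
are isomorphisms. Then I would pass to the limit: since $\{U_t\}_{0<t\leq \e}$ is a fundamental system of neighborhoods of $(0,0)$, we have $\varinjlim_{t}\rsect(U_t;F)\simeq F_{(0,0)}$ (cf.\ \cite[Corollary 5.4.9]{KS90}). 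Hence $\rsect(U_\e;F)\simto F_{(0,0)}$.

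For the isomorphism along the family, I would apply the non-characteristic deformation lemma \cite[Proposition 2.7.2]{KS90} to the increasing family of open sets $\{U_t\}_{0<t\leq \e}$. Of its hypotheses, the following are elementary:
\begin{enumerate}
\item[(a)] $U_t=\bigcup_{s<t}U_s$;
\item[(b)] $\overline{U_s}\cap \supp F$ is compact, since $\supp F$ is assumed compact (and in any case the $U_s$ are relatively compact);
\item[(c)] for $s<t$, the set $\overline{U_t\setminus U_s}\cap \supp F$ is compact.
\end{enumerate}

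The substantive hypothesis is that $\rsect_{\{\Phi\geq 0\}}(F)_z=0$ for every boundary point $z\in \overline{U_t}\setminus U_t$, where $\Phi$ is a local defining function of the complement. I would derive this from the micro-support estimate $\msupp(F)\subset \bigsqcup_{S\in\mathcal{S}} T_S^\ast(X\times Y)$ combined with the transversality conditions. By construction of $E$, and since $\ell_\lambda\cap E=\varnothing$ (Lemma \ref{lem-dir}), for each $t\in(0,\e]$ (with $\e\leq\e_1\leq\e_0$) all three of
\begin{equation*}
\partial B(0;t)\times \partial B(0;\lambda t),\qquad \partial B(0;t)\times B(0;\lambda t),\qquad B(0;t)\times \partial B(0;\lambda t)
\end{equation*}
are transversal to every stratum $S\in\mathcal{S}$.

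At a smooth boundary point, say $z\in \partial B(0;t)\times B(0;\lambda t)$, the outward conormal $d\varphi(z)$ is not in $\msupp(F)$ by this transversality, so the vanishing follows from \cite[Proposition 5.1.1]{KS90} (or the microlocal Morse lemma). The same holds for the other smooth piece with $d\psi$.

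The main obstacle is the corner $\partial B(0;t)\times\partial B(0;\lambda t)$, where $\partial U_t$ is not smooth. There $U_t$ is the intersection of two transversal half-spaces $\{\varphi<t^2\}\cap\{\psi<\lambda^2t^2\}$, and the relevant conormal cone is $\{a\,d\varphi+b\,d\psi: a,b\geq 0\}$. I would argue that transversality of the corner manifold $\varphi^{-1}(t^2)\cap\psi^{-1}(\lambda^2t^2)$ to each stratum $S$ means that no nonzero $a\,d\varphi+b\,d\psi$ annihilates $T_zS$. Hence this cone meets $\msupp(F)$ only in the zero section, and \cite[Proposition 5.3.8]{KS90}, or the non-characteristic criterion for sets with corners, gives the vanishing. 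One could alternatively avoid corners by smoothing, e.g.\ replacing $U_t$ by $\{\varphi/t^2+\psi/(\lambda t)^2<1\}$-type approximations. However, the corner transversality is exactly what the construction of $E$ was designed to guarantee, so I would use it directly.
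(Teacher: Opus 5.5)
Your proposal is correct and takes the same route as the paper. The paper likewise derives the statement from Lemma \ref{lem-dir} by applying the non-characteristic deformation lemma \cite[Proposition 2.7.2]{KS90} to the family $B(0;t)\times B(0;\lambda t)$ for $0<t\leq\e$, and then uses \cite[Corollary 5.4.9]{KS90} to pass to the stalk. Your explicit treatment of the corner $\partial B(0;t)\times\partial B(0;\lambda t)$ is precisely what the three transversality conditions encoded in $E$ are designed to supply: transversality to each stratum rules out every nonzero $a\,d\varphi+b\,d\psi$ lying in $\msupp(F)$.
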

From now, we shall apply Proposition \ref{prop-stalk} to nearby cycle sheaves. 
For this purpose, we assume that $X$ and $Y$ are complex manifolds 
and take a complex constructible sheaf $G\in \BDC_c(X\times Y\times \CC)$ 
on $X\times Y\times \CC$. 
Let $t\colon X\times Y\times \CC \longrightarrow \CC$ be the projection. 
Then our objective is to describe the stalks of the nearby cycle sheaf 
$\psi_t(G) \in \BDC_c(X\times Y)$ on $X\times Y$ at points $(x,y)\in X\times Y$. 
The problem being local, we may assume that $X$ and $Y$ are $\CC^n$ and $\CC^m$ 
respectively and $x=0\in X=\CC^n$, $y=0\in Y=\CC^m$. 
Then we obtain the following result. 
\begin{proposition}\label{prop-cont}
There exist $0<\e_1 \ll 1$ and $\lambda>0$ such that 
we have an isomorphism 
\begin{equation}
\psi_t(G)_{(0,0)} \simeq \rsect(B(0;\e)\times B(0;\lambda \e)\times \{t\};G)
\end{equation}
for any $0<\e \leq \e_1$ and $t\in \CC^\ast$ such that $0<\abs{t} \ll 1$.
\end{proposition}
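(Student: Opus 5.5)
We plan to combine Proposition \ref{prop-stalk}, applied to the nearby cycle sheaf $\psi_t(G)$ on $X\times Y$, with the classical description of $\psi_t$ via Milnor tubes.

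\textbf{Step 1: choosing the polydisc.} Since $\psi_t(G)\in \BDC_c(X\times Y)$ is $\CC$-constructible, hence $\RR$-constructible, Proposition \ref{prop-stalk} applies to $F\coloneq\psi_t(G)$. We first fix a complex Whitney stratification $\mathcal{S}$ of $X\times Y\times \CC$ adapted to $G$ such that $X\times Y\times\{0\}$ is a union of strata. By Proposition \ref{prop-adapt} (applied with $X\times Y$ in place of $X$; the argument works for $G$ constructible), $\psi_t(G)$ is adapted to the induced stratification $\mathcal{S}_0$ of $X\times Y$. We run the construction preceding Lemma \ref{lem-dir} simultaneously for $\mathcal{S}_0$ and for the trace of $\mathcal{S}$ on the fibres. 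This gives $\e_1,\lambda$ such that for $0<\e\le\e_1$ the three boundary pieces of the polydisc $P_\e\coloneq B(0;\e)\times B(0;\lambda\e)$ are transversal to all strata of $\mathcal{S}_0$. Proposition \ref{prop-stalk} then yields $\rsect(P_\e;\psi_t(G))\simeq\psi_t(G)_{(0,0)}$.

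\textbf{Step 2: nearby cycles on the polydisc.} Next we identify $\rsect(P_\e;\psi_t(G))$ with $\rsect(P_\e\times\{t\};G)$ for $0<|t|\ll 1$ (depending on $\e$). By definition, $\psi_t(G)=i^{-1}Rj_\ast Rp_\ast p^{-1}j^{-1}G$, where $p$ is the universal cover of the punctured disc. Hence $\rsect(P_\e;\psi_t(G))$ is computed on a small neighbourhood of $\overline{P_\e}\times\{0\}$ intersected with $\{0<|t|<\eta\}$, after pulling back to the universal cover. We would show that the restriction
\[
G|_{P_\e\times D^\ast_\eta}
\]
is locally constant in the $t$-direction, i.e.\ that the family $t\mapsto (P_\e\times\{t\},\partial)$ is topologically trivial over $D^\ast_\eta$. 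Then $Rp_\ast$ over the universal cover gives $\rsect(P_\e\times\{t\};G)$. For this we would use the Thom--Mather isotopy lemma for the stratification $\mathcal{S}$ restricted to $\overline{P_\e}\times D_\eta$, refined by the boundary pieces. Alternatively, microlocally, we apply the non-characteristic deformation lemma \cite[Proposition 2.7.2]{KS90} to the closed family $\overline{P_\e}\times\{t\}$, viewed as a family of sets in $X\times Y\times\CC$ and compared with the stalk limit via \cite[Corollary 5.4.9]{KS90}.

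\textbf{Main obstacle.} The key point is the transversality needed in Step 2: for $0<|t|\ll1$, the strata of $\mathcal{S}$ over $t\neq0$ must meet the boundary pieces $\partial B(0;\e)\times B(0;\lambda\e)\times\{t\}$, $B(0;\e)\times\partial B(0;\lambda\e)\times\{t\}$ and their corner transversally inside the fibre $X\times Y\times\{t\}$. We would obtain this from the transversality at $t=0$ (Step 1) together with the Thom $a_t$-type property: by \cite[Th\'eor\`eme 4.2.1]{BMM94}, as in the proof of Proposition \ref{prop-adapt}, limits as $t\to0$ of the relative conormals $T^\ast_{S\cap\{t\}}$ are contained in $\bigsqcup_{S'\in\mathcal{S}_0}T^\ast_{S'}$. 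Since transversality is an open condition and the boundary conormals avoid this limit set on the compact set $\partial\overline{P_\e}$, transversality persists for $|t|$ small. Combining Steps 1 and 2 gives the claimed isomorphism, with $\e_1,\lambda$ independent of $t$ and the smallness of $|t|$ depending on $\e$.
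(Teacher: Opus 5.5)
Your proposal follows essentially the paper's proof. Both arguments make the three boundary pieces of $B(0;\e)\times B(0;\lambda\e)$ transversal to a stratification of $t^{-1}(0)$ adapted to $\psi_t(G)$, so that Proposition \ref{prop-stalk} applies, and then use the Thom $a_t$ condition from \cite[Th\'eor\`eme 4.2.1]{BMM94} to carry this transversality to the nearby fibres, where the Milnor-tube argument of \cite[Theorem 2.6]{Tak25} finishes. One small correction: the proof of Proposition \ref{prop-adapt} relies on $\msupp=\supp\CCyc$, which needs perversity, so it does not literally extend to a general constructible $G$; the paper avoids this by refining $\mathcal{S}$ by a stratification adapted to $\psi_t(G)$, and the fact you need is true and standard in any case.
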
 
\begin{proof}
The proof is similar to \cite[Theorem 2.6]{Tak25}. 
Let $\mathcal{S}$ be a (complex analytic) Whitney stratification of $X\times Y \times \CC$ 
adapted to $G\in \BDC_c(X\times Y\times \CC)$ such that $X\times Y\times \{0\} \subset X\times Y\times \CC$ 
is a union of some strata in it. 
Then we obtain a Whitney stratification $\mathcal{S}_0 \coloneq \{S\in \mathcal{S} \mid S\subset X\times Y\times \{0\} \}$ 
of $X\times Y\times \{0\} \simeq X\times Y$. 
Let $\mathcal{T}_0$ be a Whitney stratification of $X\times Y$ adapted to 
$\psi_t(G) \in \BDC_c(X\times Y)$ and define a Whitney stratification $\mathcal{T}$ 
of $X\times Y\times \CC$ by $\mathcal{T}\coloneq \mathcal{T}_0 \sqcup \{ X\times Y\times \CC^\ast\}$. 
We take a refinement $\mathcal{W}$ of $\mathcal{S}$ and $\mathcal{T}$ and 
set $\mathcal{W}_0 \coloneq \{ S\in \mathcal{W} \mid S\subset X\times Y\times \{0\} \}$. 
Then by applying our arguments in this section to the Whitney stratification $\mathcal{W}_0$ 
of $X\times Y\times \{0\} \simeq X\times Y$, we see that there exist 
$0<\e_1 \ll 1$ and $\lambda>0$ such that for any $0<\e \leq \e_1$ the three submanifolds 
\begin{equation}
\partial B(0;\e)\times \partial B(0;\lambda \e), \quad 
\partial B(0;\e)\times  B(0;\lambda \e), \quad
B(0;\e)\times \partial B(0;\lambda \e)
\end{equation}
of $X\times Y$ intersect any stratum $S\in \mathcal{W}_0$ in $\mathcal{W}_0$ transversally. 
In particular, for any $0<\e \leq \e_1$ we obtain an isomorphism 
\begin{equation}
\rsect(B(0;\e)\times B(0;\lambda \e); \psi_t(G)) \simto \psi_t(G)_{(0,0)}.
\end{equation}
Note that we have $t^{-1}(0)=X\times Y\times \{0\}$ and 
by \cite[Th\'{e}or\`{e}me 4.2.1]{BMM94} the Whitney stratification $\mathcal{W}$ 
of $X\times Y\times \CC$ satisfies Thom's condition $a_t$ for the holomorphic 
function $t\colon X\times Y\times \CC \longrightarrow \CC$. 
Then with the help of the above transversality we can modify the proof 
of \cite[Theorem 2.6]{Tak25} to obtain the assertion.
\end{proof}
Now let $f\colon X\times Y \longrightarrow \CC$ be a non-constant holomorphic 
function on $X\times Y=\CC^{n+m}$ such that $f(0,0)=0$ and 
$F\in \BDC_c(X\times Y)$ a complex constructible sheaf on $X\times Y$. 
Let $i_f\colon X\times Y \longhookrightarrow X\times Y\times \CC$ ($(x,y)\longmapsto (x,y,f(x,y))$) 
be the graph embedding associated to $f$. 
Then by \cite[Theorem 2.5]{Tak25} there exists an isomorphism 
\begin{equation}
\psi_t({i_f}_\ast(F)) \simeq \iota_\ast \psi_f(F),
\end{equation}
where $\iota \colon f^{-1}(0) \longhookrightarrow X\times Y$ is the inclusion map. 
As a very special case of Proposition \ref{prop-stalk}, 
we thus obtain the following result.
\begin{corollary}
In the situation as above, there exists $0<\e_1 \ll 1$ and $\lambda >0$ such that 
we have an isomorphism 
\begin{equation}
\psi_f(F)_{(0,0)} \simeq \rsect (B(0;\e)\times B(0;\lambda \e) \cap f^{-1}(t); F)
\end{equation}
for any $0<\e \leq \e_1$ and $t\in \CC^\ast$ such that $0<\abs{t} \ll 1$. 
\end{corollary}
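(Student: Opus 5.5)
The plan is to apply Proposition \ref{prop-cont} to the sheaf $G\coloneq {i_f}_\ast F \in \BDC_c(X\times Y\times \CC)$, where $i_f\colon X\times Y\longhookrightarrow X\times Y\times \CC$ is the graph embedding. Pushforward by a closed embedding preserves complex constructibility, so $G$ is admissible there. Proposition \ref{prop-cont} then gives $0<\e_1\ll 1$ and $\lambda>0$ such that
\begin{equation*}
\psi_t({i_f}_\ast F)_{(0,0,0)} \simeq \rsect(B(0;\e)\times B(0;\lambda\e)\times\{t\};{i_f}_\ast F)
\end{equation*}
for all $0<\e\le\e_1$ and $0<\abs{t}\ll 1$. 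Here the stalk on the left is taken at the origin of $X\times Y\simeq X\times Y\times\{0\}$.

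The left-hand side is identified using the isomorphism $\psi_t({i_f}_\ast F)\simeq \iota_\ast\psi_f(F)$ of \cite[Theorem 2.5]{Tak25}. Since $(0,0)\in f^{-1}(0)$, the stalk of $\iota_\ast\psi_f(F)$ there is $\psi_f(F)_{(0,0)}$.

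For the right-hand side, I use that ${i_f}_\ast F$ is supported on the graph $\Gamma_f$. Restricting to the slice $X\times Y\times\{t\}$ and applying base change for the closed embedding, which is proper, gives
\begin{equation*}
({i_f}_\ast F)|_{X\times Y\times\{t\}}\simeq (j_t)_\ast(F|_{f^{-1}(t)}),
\end{equation*}
where $j_t\colon f^{-1}(t)\longhookrightarrow X\times Y$ is the inclusion and we identify $X\times Y\times\{t\}\simeq X\times Y$. Taking sections over the open set $B(0;\e)\times B(0;\lambda\e)$ then yields
\begin{equation*}
\rsect(B(0;\e)\times B(0;\lambda\e)\cap f^{-1}(t);F).
\end{equation*}
Here $F$ means its restriction to $f^{-1}(t)$, as in the statement.

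I expect no real obstacle. The only point needing care is that the constants $\e_1$ and $\lambda$ supplied by Proposition \ref{prop-cont} work uniformly in $t$ for $0<\abs{t}\ll 1$, with the smallness of $\abs{t}$ allowed to depend on $\e$, exactly as in that proposition. Since the proposition already provides this uniformity, the corollary follows by composing the two identifications above.
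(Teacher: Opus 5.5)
Your proposal is correct and is essentially the paper's own argument: apply Proposition \ref{prop-cont} to $G={i_f}_\ast F$. Then identify the left-hand side via $\psi_t({i_f}_\ast F)\simeq\iota_\ast\psi_f(F)$ from \cite[Theorem 2.5]{Tak25}, and the right-hand side by base change along the closed graph embedding. The paper's sentence cites Proposition \ref{prop-stalk}, but Proposition \ref{prop-cont}, which you used, is the result actually needed.
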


\section{Verdier specializations and Milnor fibers}\label{sec-verdier}
In this section, for a complete intersection subvariety 
$Y$ of $X=\CC^n$ (for the definition, 
see e.g. \cite{Ful98} and \cite{Loo84}) 
we study the relation between the Verdier specialization of 
the constant sheaf $\CC_X$ on $X$ along $Y$ 
and the Milnor fibers of $Y\subset X=\CC^n$. 
Recall that when $d\coloneq \mathrm{codim}_X Y\geq 2$ 
we do not always have the notion of Milnor fibers of $Y\subset X =\CC^n$. 
For this reason, we assume here some Thom condition to 
describe the structure of the Verdier specialization. 
Let $Y=\{f_1=\cdots =f_d=0\} \subset X=\CC^n$ ($f_i(x)\in \CC[x_1, \ldots ,x_n]$) 
be a complete intersection subvariety of $X=\CC^n$ such that $0\in Y$ 
and 
\begin{equation}
i_f\colon X\longhookrightarrow X\times \CC_s^d 
\quad (x\longmapsto (x, f_1(x), \ldots , f_d(x)))
\end{equation}
the graph embedding of $f=(f_1, \ldots ,f_d) \colon X \longrightarrow \CC_s^d$. 
Then we obtain a morphism of cones 
\begin{equation}
\mathrm{C} (i_f)\colon \mathrm{C}_Y X \longrightarrow 
\mathrm{C}_{X\times \{0\}}(X\times \CC^d) \simeq X\times \CC^d 
\end{equation}
induced by $i_f\colon (X,Y) \longrightarrow (X\times \CC^d, X\times \{0\})$. 
As $Y=\{f_1=\cdots =f_d=0\} \subset X=\CC^n$ is C.I., 
the image of the injective morphism $\mathrm{C}(i_f)$ is equal to 
\begin{equation}
(Y\times \{0\})\times_{X\times \{0\}} 
\mathrm{C}_{X\times \{0\}}(X\times \CC^d) \simeq Y\times \CC^d. 
\end{equation}
For a complex constructible sheaf $F\in \BDC_c(X)$ on $X=\CC^n$ by 
Verdier \cite{Ver83} we thus obtain an isomorphism 
\begin{equation}
\mathrm{Sp}_{X\times \{0\} \mid X\times \CC^d} ({i_f}_\ast F) 
\simeq \mathrm{C}(i_f)_\ast \mathrm{Sp}_{Y\mid X}(F). 
\end{equation}
Hence, for the study of $\mathrm{Sp}_{Y\mid X}(F)\in \BDC_c(\mathrm{C}_Y X)$ 
it suffices to study $\mathrm{Sp}_{X\times \{0\} \mid 
X\times \CC^d} ({i_f}_\ast F) \in \BDC_c(X\times \CC^d)$. 
For the C.I. subvariety $Y=\{f_1=\cdots =f_d=0\} \subset X=\CC^n$ 
we denote its discriminant set $f(\mathrm{Sing} f)\subset \CC^d$ 
by $\Delta_f$. 
We know that it is contained in a complex hypersurface in $\CC^d$. 
Moreover, we can easily see that the normal cone 
$\mathrm{C}_{\{0\}}(\Delta_f)$ of $\Delta_f$ along $\{0\} \subset \CC^d$ 
is a $\CC^\ast$-conic analytic subset of $\mathrm{C}_{\{0\}}(\CC^d) \simeq \CC^d$ 
such that $\mathrm{C}_{\{0\}}(\Delta_f)\neq \mathrm{C}_{\{0\}}(\CC^d)$. 

\begin{definition}\label{def-thom}
For a point $a\in \CC^d \setminus \mathrm{C}_{\{0\}}(\Delta_f)$ 
we say that $f=(f_1,\ldots ,f_d)\colon X\longrightarrow \CC_s^d$ 
satisfies the Thom condition in the derection $a$ if there exist 
a neighborhood $U\subset \CC^d \setminus \mathrm{C}_{\{0\}}(\Delta_f)$ 
of $a$ in $\CC^d \setminus \mathrm{C}_{\{0\}}(\Delta_f)$ and 
a Whitney stratification $Y=\bigsqcup_{\alpha \in A} Y_{\alpha}$ 
of $Y$ such that for any sequence of points $p_i\in \CC^\ast U 
\subset \CC^d \setminus \mathrm{C}_{\{0\}}(\Delta_f)$ 
($i=1,2,3, \ldots $) in the open cone $\CC^\ast U\subset \CC^d$ and 
points $q_i\in f^{-1}(p_i)$ ($i=1,2,3, \ldots $) satisfying the conditions 
\begin{equation}
\begin{cases}
q_i \longrightarrow ^\exists q \in Y_{\alpha} \subset Y \quad 
(i\longrightarrow +\infty), \\
T_{q_i}(f^{-1}(f(q_i)))=T_{q_i}(f^{-1}(p_i)) \longrightarrow 
^\exists \mathbf{T} (\simeq \CC^{n-d})\subset T_qX \quad 
(i\longrightarrow +\infty )
\end{cases}
\end{equation}
we have $\mathbf{T} \supset T_qY_{\alpha}$.
\end{definition}
By this definition, as in \cite[Theorem 1.1 (1)]{CMSS16} 
we obtain the following result.
\begin{lemma}\label{lem-fib}
Assume that $f=(f_1,\ldots , f_d)\colon X\longrightarrow \CC_s^d$ 
satisfies the Thom condition in a direction $a\in \CC^d \setminus \mathrm{C}_{\{0\}}(\Delta_f)$. 
Then there exists a neighborhood $U\subset \CC^d \setminus \mathrm{C}_{\{0\}}(\Delta_f)$ 
of $a$ and $0<\e_0 \ll 1$ such that for any 
$0<\eta \ll \e \leq \e_0$ the restriction 
\begin{equation}
B(0;\e) \cap f^{-1}(D_{\eta}^\ast \cap \CC^\ast U) \longrightarrow D_{\eta}^\ast \cap \CC^\ast U 
\end{equation}
of $f$ is a locally trivial fibration, where we set 
\begin{equation}
D_{\eta}^\ast \coloneq \{s=(s_1, \ldots , s_d) \in \CC^d \mid 0< 
\sqrt{\abs{s_1}^2+ \cdots +\abs{s_d}^2} <\eta \}.
\end{equation}
\end{lemma}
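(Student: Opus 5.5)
The plan is to prove Lemma \ref{lem-fib} by Thom--Mather's first isotopy lemma applied to a stratified proper submersion, following \cite[Theorem 1.1 (1)]{CMSS16}. Fix $U$ and the Whitney stratification $Y=\bigsqcup_{\alpha\in A}Y_\alpha$ as in Definition \ref{def-thom}. Shrinking $U$ to a relatively compact neighborhood of $a$ with $\overline{U}\subset \CC^d\setminus \mathrm{C}_{\{0\}}(\Delta_f)$, we may assume that $\CC^\ast U$ is an open cone avoiding the normal cone of $\Delta_f$. Then there is $\eta_0>0$ such that $D_{\eta_0}^\ast\cap \CC^\ast U$ does not meet $\Delta_f$. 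Hence on $B(0;\e_0)\cap f^{-1}(D_{\eta_0}^\ast\cap \CC^\ast U)$ the map $f$ is a submersion, and its fibers are smooth of dimension $n-d$.

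\textbf{Step 1 (transversality to small spheres).} We choose $\e_0$ such that for $0<\e\le \e_0$ the sphere $\partial B(0;\e)$ is transversal to every stratum $Y_\alpha$ with $0\in \overline{Y_\alpha}$. This follows from the curve selection lemma, or from the microlocal Bertini--Sard theorem \cite[Proposition 8.3.12]{KS90} applied to $\|x\|^2$ and $\bigsqcup_\alpha T^\ast_{Y_\alpha}X$.

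\textbf{Step 2 (the key step).} We show that for each $\e\le \e_0$ there is $\eta(\e)>0$ such that, for $0<\eta\le\eta(\e)$, every fiber $f^{-1}(p)$ with $p\in D_\eta^\ast\cap \CC^\ast U$ is transversal to $\partial B(0;\e)$. We argue by contradiction. Otherwise there are $p_i\in \CC^\ast U$ with $p_i\to 0$ and points $q_i\in f^{-1}(p_i)\cap \partial B(0;\e)$ at which $T_{q_i}f^{-1}(p_i)\subset T_{q_i}\partial B(0;\e)$. Passing to a subsequence, $q_i\to q\in Y\cap \partial B(0;\e)$ and the tangent spaces converge to some $\mathbf{T}$ in the Grassmannian. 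The limit satisfies $\mathbf{T}\subset T_q\partial B(0;\e)$. By the Thom condition of Definition \ref{def-thom}, $\mathbf{T}\supset T_qY_\alpha$ for the stratum containing $q$. Hence $T_qY_\alpha\subset T_q\partial B(0;\e)$, which contradicts Step 1. To get a uniform statement for all $0<\eta\ll\e\le\e_0$ rather than for each fixed $\e$, we run the same argument on the compact set $\overline{B(0;\e_0)}\setminus B(0;\e')$ for any $\e'>0$. Alternatively, we note that the set of bad pairs $(\e,|p|)$ is subanalytic of dimension $\le1$ and use a curve selection argument as in Lemma \ref{lem-dir}. I expect this uniformity, together with handling the non-compactness of $\CC^\ast U$ near $\partial U$, to be the main obstacle. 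The reduction to the compact closure $\overline{U}$ in the first paragraph is what handles the latter; this requires the Thom condition on a slightly larger neighborhood, which is allowed since Definition \ref{def-thom} only asks for some neighborhood.

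\textbf{Step 3 (conclusion).} On $E\coloneq\overline{B(0;\e)}\cap f^{-1}(D_\eta^\ast\cap\CC^\ast U)$ we have a manifold with boundary $\partial B(0;\e)\cap f^{-1}(\cdots)$. The map $f$ restricted to $E$ is proper over $D_\eta^\ast\cap\CC^\ast U$, since closed balls are compact and $f$ is continuous. Both $f|_E$ and its restriction to the boundary are submersions, by the first paragraph and by Step 2. Ehresmann's fibration theorem for manifolds with boundary, a special case of Thom's first isotopy lemma, then shows that $f|_E$ is a locally trivial fibration. Removing the boundary gives the assertion for the open ball $B(0;\e)$.
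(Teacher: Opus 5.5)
Your proposal is correct and is essentially the argument the paper intends: the paper gives no separate proof but defers to \cite[Theorem 1.1 (1)]{CMSS16}, whose proof is this same combination of small spheres transversal to the strata, the Thom condition transferring that transversality to the nearby smooth fibers $f^{-1}(p)$ with $p\in D_\eta^\ast\cap\CC^\ast U$, and Ehresmann's theorem for the proper submersion on the closed ball. The uniformity in $\e$ that you flag as the main obstacle is not actually required, since $0<\eta\ll\e$ lets $\eta$ depend on $\e$; shrinking $U$ to have compact closure is needed only to obtain $\eta_0$ with no critical values in $D_{\eta_0}^\ast\cap\CC^\ast U$.
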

We call the fiber of the locally trivial fibration in Lemma \ref{lem-fib} 
the Milnor fiber of $f=(f_1,\ldots , f_d)\colon X \longrightarrow \CC_s^d$ or of $Y$ 
at the origin $0\in Y\subset X=\CC^n$ in the direction $a\in \CC^d \setminus \mathrm{C}_{\{0\}}(\Delta_f)$ 
and denote it by $M_{Y,0} \subset X\setminus Y$ for short. 
Similarly, for any point $q\in Y$ we can define the Milnor fiber $M_{Y,q}\subset X\setminus Y$ 
of $f=(f_1,\ldots , f_d)\colon X \longrightarrow \CC_s^d$ or of $Y$ at it (in the direction 
$a\in \CC^d \setminus \mathrm{C}_{\{0\}}(\Delta_f)$). 

The following famous example due to L\^e D. T. is a holomorphic map which does 
not satisfy the Thom condition on the whole $\CC^d \setminus \mathrm{C}_{\{0\}}(\Delta_f)$ 
(see e.g. \cite[page 23]{Sea06}).  
We will show that however it satisfies the Thom condition in a direction. 

\begin{example}
Let us consider the holomorphic map $f\colon \CC_{x,y,z}^3 \longrightarrow \CC_{\alpha, \beta}^2$
defined by $(x,y,z)\longmapsto (x^2-y^2z, y)$ and set $Y\coloneq f^{-1}(0)=
\{ (0,0,z) \mid z\in \CC\}\subset X\coloneq \CC_{x,y,z}^3$. 
Since $Y$ is smooth (but all the points in $Y$ are critical points of $f$), 
it has the trivial Whitney stratification. 
It is easy to see that $\Delta_f=\{(0,0)\}$ and hence $\CC^2 \setminus 
C_{\{0\}}(\Delta_f)=\CC^2\setminus \{(0,0)\}$. 
For the point $a\coloneq (1,0) \in \CC^2 \setminus C_{\{0\}}(\Delta_f)=\CC^2\setminus 
\{(0,0)\}$ and the open subset $U\coloneq \{(p,q) \mid \abs q <\abs p\} \subset 
\CC^2 \setminus C_{\{0\}}(\Delta_f)=\CC^2\setminus \{(0,0)\}$ containing it 
we can show that the condition in Definition \ref{def-thom} is satisfied 
as follows. Suppose that we are given 
two sequences of points $p_i\coloneq(\alpha_i, \beta_i) \in U \ (i=1,2,3,\ldots)$ and 
$q_i\coloneq (x_i, y_i,z_i) \in f^{-1}(p_i) \ (i=1,2,3,\ldots)$ such that 
\begin{equation}
q_i=(x_i,y_i,z_i) \longrightarrow ^\exists(0,0, z_0)\in Y \ (\ \text{i.e.} \ p_i=(\alpha_i, \beta_i) 
\longrightarrow (0,0)) \ \text{as} \ i\longrightarrow +\infty.  
\end{equation}
Then the condition $f(q_i)=(x_i^2-y_i^2z_i,y_i)= p_i= (\alpha_i, \beta_i)$ implies that 
\begin{equation}\label{eq-parabola}
y_i=\beta_i \ \text{and} \  \beta_i^2 z_i= x_i^2-\alpha_i \quad (i=1,2,3,\ldots). 
\end{equation}
Moreover we have 
\begin{equation}
f^{-1}(p_i)= 
\begin{cases}
\{ (x, \beta_i, \frac{x^2-\alpha_i}{\beta_i^2} )  \ | \ x \in \CC \}  \quad ( \beta_i \neq 0), \\
\\
\{ ( \pm \alpha_i^{1/2} , 0, z)  \ | \ z \in \CC \}   \quad ( \beta_i =0).
\end{cases}
\end{equation}
Using the condition $|\beta_i| <|\alpha_i|$, 
we thus can see that the algebraic curve $f^{-1}(p_i) \subset X= \CC_{x,y,z}^3$ 
collapses to the $z$-axis in $X= \CC_{x,y,z}^3$ as $p_i=(\alpha_i,\beta_i)$ approaches 
the origin $0=(0,0) \in \CC^2$ in the open subset 
$U\subset \CC^2 \setminus C_{\{0\}}(\Delta_f)=\CC^2\setminus \{(0,0)\}$. 
Let us explain this phenomenon more precisely by describing their tangent spaces. 
First note that since the convergent sequence $z_i$ is bounded and $ |\beta_i| <|\alpha_i|$ we have 
\begin{equation}
\left|\frac{\beta_i^2 z_i}{\alpha_i}\right| \leq  |\alpha_i| |z_i| 
\longrightarrow 0 \quad (i\longrightarrow +\infty).
\end{equation}
Then together with \eqref{eq-parabola} we obtain 
\begin{equation}\label{eq-conv}
\left|\frac{x_i}{\alpha_i^{1/2}}\right| =\left|  
1+\frac{\beta_i^2z_i}{\alpha_i}\right|^{1/2}\longrightarrow 1 \quad (i\longrightarrow +\infty)
\end{equation}
and hence $x_i \neq 0$ for $i\gg 1$. We thus obtain 
\begin{equation}
\left|\frac{y_i^2}{x_i}\right|=\left|\frac{\beta_i^2}{\alpha_i^{1/2}} \right| \left| 
\frac{\alpha_i^{1/2}}{x_i}\right|\leq \left|\frac{\alpha_i^2}{\alpha_i^{1/2}} 
\right| \left| \frac{\alpha_i^{1/2}}{x_i}\right|\longrightarrow 0 \quad (i\longrightarrow +\infty).
\end{equation}
Now since for $i\gg 1$ 
the tangent space of the curve $f^{-1}(p_i)$ at $q_i$ is equal to 
$\mathrm{Span}_\CC\{ \frac{y_i^2}{2x_i}\partial_x+ \partial_z\}$, 
we have 
\begin{equation}
\lim_{i\to +\infty} T_{q_i}f^{-1}(p_i) 
=\mathrm{Span}_\CC\{ \partial_z\} =T_{(0,0,z_0)}Y.
\end{equation}
This means that the Thom condition holds true in the direction 
$a=(1,0)$. However, we can also show that for any Whitney stratification 
$Y=\bigsqcup_{\alpha \in A}Y_\alpha$ 
of $Y$ the Thom condition does not hold true in the direction $b\coloneq (0,1) 
\in \CC^2 \setminus C_{\{0\}}(\Delta_f)=\CC^2\setminus \{(0,0)\}$. 
In order to see this, first let us take a $1$-dimensional stratum 
$Y_\alpha$ and fix a point $q=(0,0,z_1) \in Y_\alpha$.
For $N\gg 1$ let us define a sequence of points $q_t \in X=\CC_{x,y,z}^3$ by
$q_t\coloneq (t^N, t, z_1) \in X=\CC_{x,y,z}^3 \ (t \in\RR\setminus\{0\})$ and consider 
a sequence of points $p_t\coloneq f(q_t)=(t^{2N}-t^2z_1,t) \in \CC_{p,q}^2 \ (t\in \RR\setminus\{0\})$. 
Then it follows that for any neighborhood $V \subset \CC^2 \setminus C_{\{0\}}(
\Delta_f)=\CC^2\setminus \{(0,0)\}$ of $b$ we have
$p_t \in \CC^\ast V \ (0<\abs t\ll 1)$, $q_t \longrightarrow q \ (t\longrightarrow 0)$ 
and $T_{q_t} f^{-1}(p_t)=\mathrm{Span}_\CC \{\partial_x+ 2t^{N-2}\partial_z\}$.
Therefore, we obtain 
\begin{equation}
\lim_{t\to 0} T_{q_t}f^{-1}(p_t) =\mathrm{Span}_\CC\{\partial_x\} \not
 \supset T_q Y_\alpha =\mathrm{Span}_\CC \{\partial_z\}.
\end{equation}
\end{example}
To construct various examples of $f\colon X\longrightarrow \CC^d $ 
satisfying the Thom condition in a direction $a\in \CC^d \setminus C_{\{0\}}(\Delta_f)$, 
we recall the following basic definition. 
\begin{definition}\label{def-transversality}
For complex submanifolds $S_1, S_2, \ldots , S_d \subset Z$ 
of a complex manifold $Z$, 
we say that they intersect transversally if for any point $q\in S_1\cap \cdots \cap S_d$ 
the $\CC$-linear subspace 
\begin{equation}
(T_{S_1}^\ast Z)_q+ \cdots +(T_{S_d}^\ast Z)_q \quad \subset T_q^\ast Z
\end{equation}
of $T_q^\ast Z$ is the direct sum $(T_{S_1}^\ast Z)_q\oplus \cdots \oplus (T_{S_d}^\ast Z)_q$.
\end{definition}
In the case of Definition \ref{def-transversality}, we write 
$S_1 \pitchfork \cdots \pitchfork S_d$. 
Moreover then we can easily see that $S_1\cap \cdots \cap S_d \subset Z$ is a complex 
submanifold of $Z$ and its codimension $\codim_Z (S_1\cap \cdots \cap S_d)$ 
in $Z$ is equal to $\codim_Z S_1 +\cdots +\codim_Z S_d$. 
\begin{definition}\label{def-str-transversality}
We say that the complex hypersurfaces $\{f_i=0\} \subset X \ (1\leq i\leq d)$ 
of $X=\CC^n$ intersect stratified transversally outside the origin $0\in X=\CC^n$ 
if there exist their Whitney stratifications $\mathcal{S}_i \ (1\leq i\leq d)$ 
such that for any $1\leq i\leq d$ the origin $\{0\}\subset \{f_i=0\}$ 
is a stratum in $\mathcal{S}_i$ and for any $d$-tuple $(S_1,\ldots , S_d) 
\ (S_i \in \mathcal{S}_i, S_i \neq \{0\})$ 
of their strata we have $S_1\pitchfork \cdots \pitchfork S_d$ in $X=\CC^d$.
\end{definition}
Now let us set 
\begin{equation}
\Gamma \coloneq \{s=(s_1,\ldots , s_d)\in \CC^d \mid s_1 \cdots s_d=0\} \quad \subset \CC^d.
\end{equation}
Then we have the following result. 
\begin{proposition}\label{prop-af}
Assume that the complex hypersurfaces $\{f_i=0\}\subset X \ (1\leq i \leq d)$ of $X=\CC^n$ 
intersect stratified transversally outside the origin $0\in X=\CC^n$ 
and let $\mathcal{S}_i \ (1\leq i\leq d)$ be their 
Whitney stratfications for which the conditions in Definition \ref{def-str-transversality} are 
satisfied. Then we have 
\begin{enumerate}
\item [(i)] 
The natural partition 
\begin{equation}
\mathcal{P} =\{S_1\cap \cdots \cap S_d\mid S_i \in \mathcal{S}_i \ (1\leq i\leq d) \} 
\end{equation}
of the C.I. $Y=\{f_1=\cdots =f_d=0\} =\{f_1=0\} \cap \cdots \cap \{f_d=0\}$ is a Whitney 
stratification.
\item [(ii)]
For any $a\in \CC^d \setminus (C_{\{0\}}(\Delta_f) \cup \Gamma)$ the morphism 
$f=(f_1,\ldots ,f_d)\colon X\longrightarrow \CC^d$ satisfies the Thom condition in the 
direction $a$ with respect to the Whitney stratification of $Y$ in (i).
\end{enumerate}
\end{proposition}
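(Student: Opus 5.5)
For (i), the plan is to reduce everything to the transversality hypothesis and the product structure. Near a point $q\in Y\setminus\{0\}$, each $q$ lies in a unique stratum $S_i\in\mathcal{S}_i$ with $S_i\neq\{0\}$. Since $S_1\pitchfork\cdots\pitchfork S_d$, the map $X\to X^d$, $x\mapsto(x,\ldots,x)$, is transversal at $q$ to the product stratification $\mathcal{S}_1\times\cdots\times\mathcal{S}_d$ of $\{f_1=0\}\times\cdots\times\{f_d=0\}$. This product stratification is Whitney, because products of Whitney stratifications are Whitney. Whitney conditions (a) and (b) are preserved under pullback by a map transversal to all strata (the standard fact used e.g. in \cite{GM88}). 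Hence $\mathcal{P}$ restricted to $Y\setminus\{0\}$ is a Whitney stratification, and the pieces $S_1\cap\cdots\cap S_d$ are submanifolds of the expected codimension.

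At the origin, $\{0\}$ is a stratum of $\mathcal{P}$. Whitney (a) for the pair $(S,\{0\})$ is vacuous, since $T_0\{0\}=0$. Whitney (b) over a point stratum is also automatic once the strata are made connected, or after using the classical fact that after shrinking to a small ball every stratification satisfies (b) over an isolated point stratum. If needed, I would first replace $X$ by a small neighborhood of $0$ and invoke the local conic structure to justify this. The frontier condition follows from the frontier conditions for each $\mathcal{S}_i$ together with transversality, which makes closures of intersections equal to intersections of closures away from $0$.

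For (ii), fix $a\notin C_{\{0\}}(\Delta_f)\cup\Gamma$ and take a small conic neighborhood $\CC^\ast U$ of $a$ avoiding $\Gamma$. Then for $p\in\CC^\ast U$ all coordinates $p_j\neq0$, so $f^{-1}(p)=\bigcap_j f_j^{-1}(p_j)$ with each $p_j\neq 0$.

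Now take $q_i\in f^{-1}(p_i)$ with $q_i\to q\in S_1\cap\cdots\cap S_d$, and $T_{q_i}f^{-1}(p_i)\to\mathbf{T}$. Because $p_i\notin\Delta_f$ for large $i$, the fiber is smooth and
\[
T_{q_i}f^{-1}(p_i)=\bigcap_j \ker df_j(q_i).
\]
Pass to a subsequence so that each $T_{q_i}f_j^{-1}(p_{i,j})\to\mathbf{T}_j$.

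If $q\neq 0$, I use that every hypersurface admits a Whitney stratification satisfying Thom's $a_{f_j}$ condition (Hironaka / Lê, cf. \cite{BMM94}). I would refine, or assume chosen, $\mathcal{S}_j$ so that it satisfies $a_{f_j}$. This gives $\mathbf{T}_j\supset T_qS_j$, hence $\mathbf{T}\supset\bigcap_j\mathbf{T}_j\supset\bigcap_jT_qS_j=T_q(S_1\cap\cdots\cap S_d)$. The dimension count matches, since the limit of intersections contains the intersection of limits. The refinement must preserve stratified transversality; I would argue this holds because the refined strata are open subsets of the old ones away from $0$, or else the $a_f$ refinement only subdivides $\{0\}$-adjacent strata.

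If $q=0$ there is nothing to check, since $T_0\{0\}=0$.

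The main obstacle is justifying the $a_{f_j}$ condition for the given $\mathcal{S}_j$, or showing that the refinement keeps transversality. I would first try to deduce it directly. Since $\mathcal{S}_j$ is a Whitney stratification of $\{f_j=0\}$ with $\{0\}$ a stratum, Briançon–Maisonobe–Merle's Théorème 4.2.1 \cite{BMM94}, applied to the graph of $f_j$, yields $a_{f_j}$ for the stratification of $\{f_j=0\}$ after adjoining the complement.
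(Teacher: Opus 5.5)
\textbf{The gap is in (ii).} It is the step $\mathbf{T}\supset\bigcap_j\mathbf{T}_j$. The principle you cite runs the wrong way: a limit of intersections is always \emph{contained in} the intersection of the limits. So in general you only get $\mathbf{T}\subset\bigcap_j\mathbf{T}_j$. Equality holds iff $\dim\bigcap_j\mathbf{T}_j=n-d$, i.e.\ iff the limiting conormal lines $\mathbf{T}_j^{\perp}$ are linearly independent. That is exactly where the stratified transversality hypothesis must enter (ii), and your argument for (ii) never uses it.

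The paper's Example~\ref{ex-coexample} shows the step really fails without it. Take $q=(0,0,0,u,v)$ with $uv\neq 0$ and $q_i=(r_i,0,0,u,v)$. Then $dg_1(q_i)=u\,dx+r_i\,du$ and $dg_2(q_i)=v\,dx+r_i\,dv$, so $\mathbf{T}_1=\mathbf{T}_2=\ker dx\supset T_qS$. This is consistent with $a_{g_1}$ and $a_{g_2}$, since both hypersurfaces are smooth and tangent at $q$. Yet $\mathbf{T}=\mathrm{Span}_{\CC}\{\partial_y,\partial_z,u\partial_u+v\partial_v\}\subsetneq\ker dx$, and it does not contain $T_qS$.

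\textbf{The repair is the paper's argument.} Pass to a subsequence so that $df_j(q_i)/\|df_j(q_i)\|\to\xi_j$. By $a_{f_j}$, each $\xi_j$ lies in $(T^\ast_{S_j}X)_q\setminus\{0\}$. Since $(T^\ast_{S_1}X)_q+\cdots+(T^\ast_{S_d}X)_q$ is direct, $\xi_1,\ldots,\xi_d$ are linearly independent. Hence $\mathrm{Span}\{df_1(q_i),\ldots,df_d(q_i)\}\to\mathrm{Span}\{\xi_1,\ldots,\xi_d\}$, and taking annihilators gives $\mathbf{T}=\bigcap_j\ker\xi_j=\bigcap_j\mathbf{T}_j\supset\bigcap_jT_qS_j\supset T_qS$.

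\textbf{Obtaining $a_{f_j}$.} Commit to your last suggestion and drop the refinement alternative. The statement concerns the given $\mathcal{P}$, and a refinement need not preserve stratified transversality. No refinement is needed: $\widetilde{\mathcal{S}_j}=\mathcal{S}_j\sqcup\{X\setminus\{f_j=0\}\}$ is already a Whitney stratification of $X$, because the open stratum satisfies (a) and (b) trivially. So \cite[Th\'eor\`eme 4.2.1]{BMM94} gives $a_{f_j}$ directly. It applies to your sequence because $p_{ij}\neq0$ puts $q_i$ in the open stratum; this is where $\Gamma$ is used.

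\textbf{Part (i).} This is essentially the paper's argument. The paper cites \cite[Remark 2.2.1 (3)]{BSS09}, and your diagonal pull-back of $\mathcal{S}_1\times\cdots\times\mathcal{S}_d$ proves the same thing. One correction: Whitney (b) over the point stratum $\{0\}$ holds by the curve selection lemma; connectedness of the strata plays no role.
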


\begin{proof}
For $1\leq i\leq d$ we define a Whitney stratification $\widetilde{\mathcal{S}_i}$ 
of $X=\CC^n$ by $\widetilde{\mathcal{S}_i}\coloneq \mathcal{S}_i \sqcup \{X\setminus \{f_i=0\}\}$. 
Then we obtain a partition $\widetilde{\mathcal{P}}$ of $X=\CC^n$ defined by 
\begin{equation}
\widetilde{\mathcal{P}}\coloneq \{S_1\cap \cdots \cap S_d \mid S_i \in 
\widetilde{\mathcal{S}_i} \ (1\leq i \leq d) \}.
\end{equation}
By \cite[Remak 2.2.1 (3)]{BSS09} the partition $\widetilde{\mathcal{P}} \setminus \{\{0\}\}$ 
of $X\setminus \{0\}$ is a Whitney stratification. 
Recall that if $S, S^\prime \subset X$ are $\CC$-analytic their intersection 
$S\cap S^\prime \subset X$ is also $\CC$-analytic. 
This implies that the partition $\widetilde{\mathcal{P}}$ is 
a covering of $X$ by $\CC$-analytic subsets. 
Moreover it is a stratification of $X$. 
Indeed, for any member $S_1\cap \cdots \cap S_d \ (S_i \in \widetilde{\mathcal{S}_i})$ 
of $\widetilde{\mathcal{P}}$ its closure $\overline{S_1\cap \cdots \cap S_d}$ 
is a union of some members in it. 
Since Whitney's conditions (a) and (b) are trivially satisfied along 
the $0$-dimensional stratum $\{0\} \in \widetilde{\mathcal{P}}$, 
the partition $\widetilde{\mathcal{P}}$ is a Whitney stratification of $X$. 
Restricting it to $Y=\{f_1=0\}\cap \cdots \cap \{f_d=0\} \subset X$ we find that also the 
partition $\mathcal{P}$ of $Y$ is a Whitney stratification. 
We thus proved (i). 
Let us prove the remaining assertion
(ii). Let $U\subset \CC^d\setminus (C_{\{0\}}(\Delta_f)\cup \Gamma)$ 
be a neighborhood of $a$ in $\CC^d \setminus \left( C_{\{0\}}(\Delta_f) 
\cup \Gamma\right)$. Suppose that for a sequence of points $p_i=(p_{i1}, \ldots , p_{id})\in 
\CC^\ast U \subset \CC^d \setminus \left( C_{\{0\}}(\Delta_f) \cup 
\Gamma\right) \ (i=1,2,3, \ldots)$ in the open cone 
$\CC^\ast U \subset \CC^d \ (i=1,2,3,\ldots)$ 
and that of points $q_i \in f^{-1}(p_i) \ (i=1,2,3, \ldots)$ 
we have 
\begin{equation}
\begin{cases}
q_i \longrightarrow ^\exists q\in S \subset Y \quad (i\longrightarrow +\infty), \\
T_{q_i}(f^{-1}(p_i)) \longrightarrow ^\exists \mathbf{T} \ 
(\simeq \CC^{n-d}) \subset T_q X \quad (i\longrightarrow +\infty),
\end{cases}
\end{equation}
where $S$ is a stratum of the Whitney stratification of $Y$ in (i). 
Then by the condition $a\not \in C_{\{0\}}(\Delta_f)$ for any $i=1,2,3,\ldots$ 
the complex hypersurfaces $f_j^{-1}(p_{ij})\subset X \ (1\leq j\leq d)$ 
are smooth and intersect transversally. 
In particular, for the complex submanifold $f^{-1}(p_i)=f_1^{-1}(p_{i1})
\cap \cdots \cap f_d^{-1}(p_{id}) \subset X$ 
we have 
\begin{equation}
T_{q_i}(f^{-1}(p_i))=T_{q_i}(f_1^{-1}(p_{i1}))\cap \cdots \cap T_{q_i}(f_d^{-1}(p_{id})).
\end{equation}
As the Thom condition for $0$-dimensional strata in $Y$ is trivial, 
we may assume that $S\neq \{0\}$. 
Then by our construction of the Whitney stratification of $Y$, 
there exist strata $S_i\in \mathcal{S}_i\setminus \{\{0\}\} \ 
(1\leq i\leq d)$ such that $S=S_1\cap \cdots \cap S_d$. 
By \cite[Th\'eor\`eme 4.2.1]{BMM94}, for any $1\leq j\leq d$ the covector 
$df_j(q_i)\in T_{q_i}^\ast (f_j^{-1}(p_{ij}))$ 
tends to the one in $(T_{S_j}^\ast X)_q \subset T_q^\ast X$ as $i\longrightarrow +\infty$. 
Moreover by our assumption, 
the $\CC$-linear subspace 
\begin{equation}
(T_{S_1}^\ast X)_q +\cdots +(T_{S_d}^\ast X)_q \quad \subset T_q^\ast X  
\end{equation}
of $T_q^\ast X$ is the direct sum $(T_{S_1}^\ast X)_q \oplus \cdots 
\oplus (T_{S_d}^\ast X)_q \simeq (T_S^\ast X)_q$. 
This implies that the limit $\mathbf{T} \ (\simeq \CC^{n-d}) \subset T_qX$ of the tangent planes 
$T_{q_i}(f^{-1}(p_{ij})) \subset T_{q_i}X$ as $i\longrightarrow +\infty$ 
satisfies the condition $\mathbf{T}\supset T_qS$. 
This completes the proof. 
\end{proof}
\begin{example}\label{ex-str-trans}
Let $\mathbb{P}^{n-1}$ be the $(n-1)$ dimensional complex projective space and 
$\pi \colon \CC^n \setminus \{0\} \longtwoheadrightarrow \mathbb{P}^{n-1}$ 
the canonical quotient map. 
For a subvariety $Z\subset \mathbb{P}^{n-1}$ we define the cone $\widehat{Z} \subset X=\CC^n$ 
over it by $\widehat{Z}\coloneq \overline{\pi^{-1}(Z)} \subset X=\CC^n$. 
Assume that $f_i\colon X=\CC^n \longrightarrow \CC \ (1\leq i\leq d)$ 
are homogeneous polynomials and let $Z_1, \ldots , Z_d \subset \mathbb{P}^{n-1}$ be the 
projectivizations of $\{f_1=0\},\ldots , \{f_d=0\} \subset X=\CC^n$ respectively. 
For $1\leq i\leq d$ let $\mathcal{S}_i$ be a Whitney stratification of $Z_i\subset \mathbb{P}^{n-1}$ 
and assume that for any $d$-tuple $(S_1,\ldots , S_d) \ (S_i \in \mathcal{S}_i)$ of their strata 
we have the transversality $S_1\pitchfork \cdots \pitchfork S_d$. 
Then for $1\leq i\leq d$ we can naturally endow $\{f_i=0\}=\widehat{Z_i}$ 
with a Whitney stratification $\widehat{\mathcal{S}_i}$ so that the complex hypersurfaces 
$\{f_i=0\}\subset X \ (1\leq i\leq d)$ intersect stratified transversally outside the origin 
$0\in X=\CC^n$. 
It follows from Proposition \ref{prop-af} that for any $a\in \CC^d \setminus (C_{\{0\}}(\Delta_f)\cup \Gamma)$ 
the morphism $f\colon X\longrightarrow \CC^d$ satisfies the Thom condition in the direction $a$.
\end{example}
In the example below, we will present a C.I. subvariety which does not satisfy 
the condition of Definition \ref{def-thom}.  
\begin{example}\label{ex-coexample}
Let $X=\CC^5$ be the $5$-dimensional complex vector space with coordinates $(x,y,z,u,v)$ 
and $g\colon X=\CC^5 \longrightarrow \CC^2$ a holomorphic map defined by 
$(x,y,z,u,v) \longmapsto (xu, xv+yz)$. 
We denote $xu$ and $xv+yz$ by $g_1$ and $g_2$ respectively. 
It is easy to check that the subvariety $Y\coloneq g^{-1}(0)=\{g_1=0\} \cap \{g_2=0\} \subset X= \CC^n$ 
is C.I.
This example does not satisfy the Thom condition for any direction. 
Let us explain this result more precisely. 
First, by computing $2\times 2$ minors of the Jacobian matrix of $g$, 
we have $\Delta_g=\{(0,\beta) \mid \beta \in \CC\}\subset \CC^2$ and $\CC^2 \setminus C_{\{0\}}(\Delta_g) 
=\{(\alpha, \beta) \in \CC^2 \mid \alpha \neq 0\}$. 
Similarly, we have $Y_{\mathrm{sing}}=\{ x=y=z=0\} \cup \{ x=y=u=0\} \cup \{x=y=v=0\} \subset Y \subset X=\CC^5$. 
Note that by Lemma \ref{lem-hironaka}, for any Whitney stratification of $Y$ there exists a $2$-dimensional stratum 
$S\subset \{x=y=z=0\}$ so that $T_q S =\mathrm{Span}_{\CC}\{\partial_u, \partial_v\}$ 
for any $q \in S$ and the union of such strata is open dense in $\{x=y=z=0\} \subset X=\CC^5$. 
Thus for any $a\in \CC^2 \setminus C_{\{0\}}(\Delta_g)$ and any neighborhood 
$U\subset \CC^2 \setminus C_{\{0\}}(\Delta_g)$ of $a$ 
there exist a $2$-dimensional stratum $S\subset \{x=y=z=0\}$, a point $q=(0,0,0,u,v)\in S$ and 
a sequence of points $q_i\coloneq (r_i,0,0,u,v)$ such that $q_i \longrightarrow q$ 
(i.e. $r_i \longrightarrow 0$) as $i\longrightarrow +\infty$ and 
$g(q_i) \in \CC^\ast U \ (i=1,2,3,\ldots)$. 
Then the tangent plane of $q_i$ is computed as $T_{q_i}(g^{-1}(g(q_i)))=\ker dg_{q_i} 
=\mathrm{Span}_{\CC}\{ \partial_y, \partial_z, r_i \partial_x -u\partial_u -v\partial_v\}$.
Since each tangent plane of $q_i$ is dependent on only $r_i$, 
we obtain 
\begin{equation}
\lim_{i\to +\infty} T_{q_i}(g^{-1}(g(q_i))) =\mathrm{Span}_{\CC}\{ \partial_y, 
\partial_z, u\partial_u +v\partial_v \} \not \supset T_q S= \mathrm{Span}_{\CC}\{\partial_u, \partial_v\}.
\end{equation}
This means that the Thom condition does not hold true in any direction. 
\end{example}

\begin{remark} 
This subvariety is a example which does not satisfy the stratified transversality 
in Definition \ref{def-str-transversality}.
In order to see this, let us consider the subset $\{(0,0,0,u,0) \in \CC^5 \mid u\neq 0\} \subset Y$, 
which is a intersection of the regular part 
$\{g_1=0\} \setminus \{(0,y,z,0,v) \in \CC^5\} \subset \{g_1=0\}$ of $\{g_1=0\}$ 
and the singular part $\{(0,0,0,u,0) \in \CC^5 \} \subset \{g_2=0\}$ of $\{g_2=0\}$. 
Remark that by Lemma \ref{lem-hironaka}, for any Whitney stratification of $\{g_1=0\}$ (resp. $\{g_2=0\}$) 
there exists a $4$-dimensional stratum $S^{(1)} \subset \{g_1=0\}$ (resp. 
$1$-dimensional stratum $S^{(2)}\subset \{g_2=0\}$) 
contained in $\{(0,y,z,u,v)\in \CC^5 \mid u\neq 0\}$ (resp. singular part of $\{g_2=0\}$). 
Then for any point $q\in S^{(1)}\cap S^{(2)} \subset \{(0,0,0,u,0) \in \CC^5 \mid u\neq 0\}$ 
we have $T_q S^{(1)}\simeq \mathrm{Span}_{\CC} \{\partial_y, \partial_z, \partial_u, \partial_v\}$ 
and $T_q S^{(2)} \simeq \mathrm{Span}_{\CC} \{\partial_u\}$ 
and thus obtain 
\begin{equation}
T_qS^{(1)} + T_qS^{(2)} \neq T_q X.
\end{equation}
Note that the condition of Definition \ref{def-transversality} is equivalent to the 
usual definition of transversality in the case of $d=2$.
\end{remark}

\begin{proposition}\label{prop-milfib}
Assume that $f=(f_1,\ldots , f_d)\colon X \longrightarrow \CC_s^d$ 
satisfies the Thom condition in a direction $a\in \CC^d \setminus \mathrm{C}_{\{0\}}(\Delta_f)$. 
Then for any point $q\in Y\subset X=\CC^n$ there exist isomorphisms 
\begin{equation}
H^j \mathrm{Sp}_{X\times \{0\} \mid X\times \CC^d} ({i_f}_\ast \CC_X)_{(q,a)} \simeq 
H^j(M_{Y,q}; \CC) \quad (j\in \ZZ).
\end{equation}
\end{proposition}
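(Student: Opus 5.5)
Identify the Verdier specialization with a nearby cycle functor along the deformation to the normal cone, compute its stalk at $(q,a)$ via Proposition \ref{prop-cont}, and use the Thom condition (Lemma \ref{lem-fib}) to recognize the resulting local section space as the cohomology of the Milnor fiber.

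\textbf{Step 1: the specialization as a nearby cycle.} Let $\widetilde{Z}=X\times\CC^d\times\CC_t$ be the deformation of $X\times\CC^d$ to the normal cone of $X\times\{0\}$. Explicitly, $p\colon\widetilde{Z}\to X\times\CC^d$ is $(x,s,t)\mapsto(x,ts)$, and the special fiber $t^{-1}(0)$ is identified with $T_{X\times\{0\}}(X\times\CC^d)\simeq X\times\CC^d$. By the definition in \cite{Ver83}, and up to the standard identification, the specialization is the nearby cycle
\begin{equation}
\mathrm{Sp}_{X\times\{0\}\mid X\times\CC^d}(F)\simeq \psi_t(p^{-1}F)
\end{equation}
(in fact $\psi_t$ here is the restriction of the direct image from $\{t\neq0\}$, but for $\CC^\ast$-conic constructible situations these agree). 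We apply this with $F={i_f}_\ast\CC_X$, working near the point $(q,a,0)\in\widetilde{Z}$.

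\textbf{Step 2: the stalk via Proposition \ref{prop-cont}.} Apply Proposition \ref{prop-cont} with the first factor $\CC^n$ centered at $q$ and the second factor $\CC^d$ centered at $a$, after translating. This gives $\e_1>0$ and $\lambda>0$ such that, for $0<\e\leq\e_1$ and $0<|t|\ll1$,
\begin{equation}
\mathrm{Sp}({i_f}_\ast\CC_X)_{(q,a)}\simeq \rsect\bigl(B(q;\e)\times B(a;\lambda\e);\,p^{-1}{i_f}_\ast\CC_X|_{\{t\}}\bigr).
\end{equation}
Since ${i_f}_\ast\CC_X$ is supported on the graph of $f$, the right-hand side is $\rsect(\{x\in B(q;\e)\mid f(x)\in tB(a;\lambda\e)\};\CC)$.

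\textbf{Step 3: recognizing the Milnor fiber.} Shrink $\lambda$ so that $B(a;\lambda\e)\subset U$, where $U$ is the neighborhood from Lemma \ref{lem-fib}. Then for $|t|$ small compared to $\e$, the set $tB(a;\lambda\e)$ is a small contractible ball contained in $D^\ast_\eta\cap\CC^\ast U$. Lemma \ref{lem-fib}, applied at the point $q$, says that $f$ restricted to $B(q;\e)\cap f^{-1}(D^\ast_\eta\cap\CC^\ast U)$ is a locally trivial fibration over $D^\ast_\eta\cap\CC^\ast U$. Over the contractible set $tB(a;\lambda\e)$ it is therefore trivial, so
\begin{equation}
B(q;\e)\cap f^{-1}(tB(a;\lambda\e))\simeq M_{Y,q}\times(\text{ball}),
\end{equation}
which is homotopy equivalent to $M_{Y,q}$. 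Taking cohomology gives $H^j(M_{Y,q};\CC)$.

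\textbf{Main obstacle.} The delicate point is the order of limits. Proposition \ref{prop-cont} fixes $\e$ and $\lambda$ first and then lets $|t|\to0$. Lemma \ref{lem-fib} needs $\eta\ll\e$. The isomorphism also requires $\lambda\e$ to be small enough that $B(a;\lambda\e)\subset U$. These constraints are compatible: first choose $\e\leq\min(\e_0,\e_1)$, then decrease $\lambda$, which must be justified by Lemma \ref{lem-dir} since the admissible ratios form the complement of a one-dimensional subanalytic cone, so small $\lambda$ is allowed. Finally take $|t|<\eta/(|a|+\lambda\e)$.

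A secondary point is to check that the stratification produced in the proof of Proposition \ref{prop-cont} can be made compatible with the fibration structure coming from the Thom condition, so that the non-characteristic deformation argument of \cite{Tak25} applies. I would handle this by refining by the strata of $Y$ appearing in Definition \ref{def-thom}.
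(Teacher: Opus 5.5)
Your proof is correct. Steps 1 and 2 coincide with the paper's proof. The paper also writes $\mathrm{Sp}_{X\times\{0\}\mid X\times\CC^d}({i_f}_\ast\CC_X)\simeq\psi_t(\CC_{\{ts=f(x)\}})$. It then uses Proposition \ref{prop-cont} to identify the stalk at $(q,a)$ with $\rsect(B(q;\e)\times B(a;\lambda\e);\CC_{\{\tau s=f(x)\}})$.

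The routes differ in Step 3. The paper uses Lemma \ref{lem-fib} only through the definition of $M_{Y,q}$ as a fiber $B(q;\e)\cap f^{-1}(\tau a)$. To reach that single fiber, it first uses the Thom condition to show that $f^{-1}(\tau s)$ is transverse to $\partial B(q;\e)$ for all $s\in\overline{B(a;\lambda\e)}$ and $0<\tau\ll 1$. It then applies the non-characteristic deformation lemma to $\varphi(x,s)=|s-a|$ and $\mathcal{K}=\rsect_{B(q;\e_1)\times\CC^d}(\CC_{\{\tau s=f(x)\}})$. This shrinks the $s$-ball to the point $a$ and yields $\rsect(B(q;\e);\CC_{\{\tau a=f(x)\}})$. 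You instead use local triviality of the Milnor fibration over the contractible ball $\tau B(a;\lambda\e)\subset D^\ast_\eta\cap\CC^\ast U$. That is shorter if Lemma \ref{lem-fib} is taken as a black box. The paper's version stays within microsupport estimates and makes explicit the transversality behind Lemma \ref{lem-fib}, which the paper only cites from \cite{CMSS16}.

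A few side remarks should be corrected, though none affects the argument.

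\begin{itemize}
\item You do not need to shrink $\lambda$ or appeal to Lemma \ref{lem-dir}. Proposition \ref{prop-cont} holds for every $0<\e\leq\e_1$ with $\lambda$ fixed. So just take $\e$ small enough that $\overline{B(a;\lambda\e)}\subset U$ and $\lambda\e<|a|$; the latter keeps $0\notin\tau B(a;\lambda\e)$.
\item Your ``secondary point'' is moot. Proposition \ref{prop-cont} applies to any $\CC$-constructible sheaf, in particular to $\CC_{\{ts=f(x)\}}$, so no compatibility of stratifications needs checking.
\item The parenthetical in Step 1 is wrong. $\psi_t$ is not $i^{-1}Rj_\ast j^{-1}$ for $j\colon\{t\neq 0\}\hookrightarrow\widetilde{Z}$. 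Already for the constant sheaf, the latter acquires an extra $\CC[-1]$ from the punctured disc, and this is not cured by conicity. The real-variable comparison uses the half-line $t\in\RR_{>0}$ instead. The identification $\mathrm{Sp}\simeq\psi_t(p^{-1}\,\cdot\,)$ that you actually use is fine and is the one the paper uses.
\end{itemize}
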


\begin{proof}
We may assume that $q=0\in Y\subset X=\CC^n$. 
Let $\widetilde{(X\times \CC^d)_{X\times \{0\}}} \simeq X\times \CC_s^d \times \CC_t$ 
be the deformation to the normal cone of $X\times \{0\} \subset X\times \CC^d$ 
and consider the morphisms 
\begin{equation}
\begin{cases}
p\colon \widetilde{(X\times \CC^d)_{X\times \{0\}}} 
\longrightarrow X\times \CC^d \quad &((x,s,t) \longmapsto (t,ts)), \\
t\colon \widetilde{(X\times \CC^d)_{X\times \{0\}}} \longrightarrow \CC \quad & ((x,s,t)\longmapsto t) 
\end{cases}
\end{equation}
associated to it such that $t^{-1}(0) \simeq \mathrm{C}_{X\times \{0\}}(X\times \CC^d) \simeq X\times \CC^d$. 
Then there exist isomorphisms 
\begin{align}
\mathrm{Sp}_{X\times \{0\} \mid X\times \CC^d} ({i_f}_\ast F) &\simeq \psi_t(p^{-1}{i_f}_\ast \CC_X) \\
& \simeq \psi_t(\CC_{\{ts=f(x)\}}).
\end{align}
For $\e, \delta>0$ let $B(0;\e)\subset X=\CC^n$ (resp. $B(a;\delta)\subset \CC_s^d$) 
be the open ball centered at $0\in X=\CC^n$ (resp. $a\in \CC^d$) 
with radius $\e>0$ (resp. $\delta>0$). Then by Proposition \ref{prop-cont} 
there exist $0<\e_1 \ll 1$ and $\lambda >0$ such that we have isomorphisms 
\begin{align}
\psi_t(\CC_{\{ts=f(x)\}})_{(0,a)} &\simeq \rsect (B(0;\e)\times B(a;\lambda \e)\times \{\tau\}; \CC_{\{ts=f(x)\}}) \\
&\simeq \rsect(B(0;\e)\times B(a;\lambda \e)); \CC_{\{\tau s=f(x)\}})
\end{align}
for any $0<\e\leq \e_1$ and $\tau \in \CC^\ast$ such that $0<\abs{\tau} \ll 1$.
Let $Y=\bigsqcup_{\alpha \in A}Y_{\alpha}$ be a Whitney stratification of $Y\subset X=\CC^n$ 
(satisfying the condition in Definition \ref{def-thom}). 
Then by replacing $0<\e_1 \ll 1$ if necessary we may assume also that $\partial B(0;\e)\subset X=\CC^n$ 
intersects $Y_{\alpha}$ transversally for any $0<\e \leq \e_1$ and $\alpha \in A$. 
Together with our assumption, we thus can see that $f^{-1}(\tau s) \subset X=\CC^n$ 
intersects $\partial B(0;\e) \subset X=\CC^n$ transversally for any 
$0<\e \leq \e_1$, $s\in \overline{B(a;\lambda \e)}$ and $\tau >0$ such that $0<\tau \ll 1$. 
Now we define a real analytic function $\varphi \colon X\times \CC^d \longrightarrow \RR$ 
on $X\times \CC^d$ by 
\begin{equation}
\varphi(x,s) \coloneq \sqrt{\abs{s_1-a_1}^2+\cdots +\abs{s_d-a_d}^2} \quad ((x,s)\in X\times \CC^d)
\end{equation}
and set 
\begin{equation}
\mathcal{K} \coloneq \rsect_{B(0;\e_1)\times \CC^d}(\CC_{\{\tau s=f(x)\}}) \quad \in \BDC_{\RR-c}(X\times \CC^d)
\end{equation}
for some fixed $\tau>0$ such that $0<\tau \ll 1$. 
Then we see that the subset 
\begin{equation}
\{(x,s; d\varphi(x,s)) \mid (x,s) \in X\times \CC^d, \quad \varphi(x,s)\leq 
\lambda \e_1\} \quad \subset T^\ast(X\times \CC^d)
\end{equation}
of $T^\ast(X\times \CC^d)$ meets $\msupp(\mathcal{K})$ only in the zero-section 
of $T^\ast(X\times \CC^d)$.
By the non-characteristic deformation lemma \cite[Proposition 2.7.2]{KS90} 
we thus obtain isomorphism 
\begin{align}
&\rsect(B(0;\e_1)\times B(a;\lambda \e_1);\CC_{\{\tau s=f(x)\}}) \\
&\simeq \rsect(\varphi^{-1}([0,\lambda \e_1]);\mathcal{K}) \\
&\simto \rsect(\varphi^{-1}(0);\mathcal{K})=\rsect(X\times \{a\};\mathcal{K}) \\
&\simeq \rsect(B(0;\e) ;\CC_{\{\tau a=f(x)\}}).
\end{align}
From this the assertion immediately follows.
\end{proof}

\begin{proposition}\label{prop-versp}
Assume that $f=(f_1,\ldots , f_d)\colon X\longrightarrow \CC^d$ 
satisfies the Thom condition in a direction $a\in \CC^d \setminus \mathrm{C}_{\{0\}}(\Delta_f)$.
Then there exists a neighborhood $U\subset \CC^d \setminus \mathrm{C}_{\{0\}}(\Delta_f)$ 
of $a$ in $\CC^d \setminus \mathrm{C}_{\{0\}}(\Delta_f)$ and a constructible sheaf 
$F\in \BDC_c(X)$ on $X$ such that for the projection $\alpha_U \colon X\times U \longrightarrow X$ 
we have an isomorphism 
\begin{equation}
\mathrm{Sp}_{X\times \{0\} \mid X\times \CC^d}({i_f}_\ast \CC_X)|_{X\times U} \simeq \alpha_U^{-1}F.
\end{equation}
\end{proposition}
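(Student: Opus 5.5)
The plan is to show that $\mathcal{G}_0 \coloneq \mathrm{Sp}_{X\times \{0\} \mid X\times \CC^d}({i_f}_\ast \CC_X)$ is locally constant along the $\CC^d_s$-direction over a small neighborhood $U$ of $a$. Then $F$ is obtained by restricting to a slice $X\times\{a\}$, and $U$ is shrunk to a contractible open set (a ball around $a$ in $\CC^d\setminus C_{\{0\}}(\Delta_f)$).

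Concretely, put $F\coloneq \mathcal{G}_0|_{X\times\{a\}}$. Let $j_a\colon X\longhookrightarrow X\times U$ be $x\mapsto (x,a)$, and $\alpha_U$ the projection as in the statement. By \cite[Proposition 2.7.8]{KS90} (or the characterization of sheaves locally constant along the fibers of a projection with contractible fibers), it suffices to prove the micro-support estimate
\begin{equation}
\msupp(\mathcal{G}_0|_{X\times U}) \subset T^\ast X \times T_U^\ast U,
\end{equation}
i.e.\ that covectors in $\msupp$ have zero $ds$-component. This estimate gives $\mathcal{G}_0|_{X\times U}\simeq \alpha_U^{-1}R\alpha_{U\ast}\mathcal{G}_0|_{X\times U}$, and the latter is $\alpha_U^{-1}j_a^{-1}\mathcal{G}_0$ since $U$ is contractible.

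To get the estimate I will use the description $\mathcal{G}_0\simeq\psi_t(\CC_{\{ts=f(x)\}})$ from the proof of Proposition \ref{prop-milfib}. Together with the equality $\CCyc(\psi_t)=\lim_{\tau\to 0}\CCyc(\,\cdot\,|_{t=\tau})$ of \cite[Theorem 4.7]{FKT26}, as used in Proposition \ref{prop-adapt}, this gives
\begin{equation}
\msupp(\mathcal{G}_0)\subset \lim_{\tau\to 0}\msupp(\CC_{\{\tau s=f(x)\}}),
\end{equation}
the limit taken in the relative cotangent bundle. Over $X\times\CC^\ast U$ away from $\Delta_f$, the set $Z_\tau=\{\tau s=f(x)\}$ is a smooth graph-like manifold. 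Its conormal at $(x,s)$ is spanned by the covectors $\sum_k c_k(df_k(x)-\tau\, ds_k)$. The $ds$-components carry a factor $\tau$, while the $dx$-components are the $df_k$. If the $df_k(x)$ stay bounded away from degenerating after normalization, the $ds$-parts vanish in the limit.

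The main obstacle is controlling points where $\sum c_k df_k(x)\to 0$ while $\tau c\not\to 0$, i.e.\ near $Y$, where the $df_k$ may become dependent. Here I plan to use the Thom condition in the direction $a$. For $s\in U$ and $\tau$ small, $\tau s\in \CC^\ast U$, and the Thom condition (as in Lemma \ref{lem-fib}, after a Milnor-type rescaling) guarantees that the fibres $f^{-1}(\tau s)$ are smooth complete intersections with limiting conormals inside $\bigsqcup_\alpha T^\ast_{Y_\alpha}X$, controlled uniformly. I will normalize covectors by $\|\sum c_k df_k\|$ plus $\|\tau c\|$ and argue by a curve selection lemma along analytic arcs $(x(\tau),s(\tau),c(\tau))$. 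Comparing orders in $\tau$ should show that the ratio $\|\tau c\|/\|\sum c_kdf_k(x)\|$ tends to $0$, because $f(x(\tau))=\tau s(\tau)$ with $s(\tau)\to s_0\in U$ forces $|f|\sim|\tau|$, and a Łojasiewicz-type inequality $\|\sum c_k df_k\|\gtrsim |f|\cdot\|c\|$ holds on the region $f\in\CC^\ast U$. That inequality is in essence the non-degeneracy encoded by $a\notin C_{\{0\}}(\Delta_f)$ plus Thom's condition. If it proves delicate, a fallback is to use the stalk computation of Proposition \ref{prop-milfib} directly: the cohomology sheaves of $\mathcal{G}_0$ restricted to $\{q\}\times U$ are locally constant because the Milnor fibration of Lemma \ref{lem-fib} is locally trivial over $D_\eta^\ast\cap\CC^\ast U$. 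This approach would then be upgraded from pointwise to sheaf level via the same non-characteristic deformation argument with $\varphi(x,s)=|s-s_0|$ used in that proof, which yields $\rsect(B(0;\e)\times B(s_0;\delta);\mathcal{G}_0)\simeq \mathcal{G}_0|_{B(0;\e)\times\{s_0\}}$ and hence the required isomorphism with $\alpha_U^{-1}F$.
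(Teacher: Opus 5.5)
Your reduction to the estimate $\msupp(\mathcal{G}_0|_{X\times U})\subset T^\ast X\times T^\ast_UU$ is fine, and so is describing $\msupp(\mathcal{G}_0)$ as the limit of the conormals $T^\ast_{Z_\tau}(X\times\CC^d)$ spanned by $\sum_k c_k(df_k(x)-\tau\,ds_k)$. The gap is in the step that is supposed to kill the $ds$-components.

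An inequality $\|\sum_k c_k\,df_k(x)\|\gtrsim|f(x)|\,\|c\|$ only bounds the $ds$-part of a limit covector by its $dx$-part. What you need is the strict version $|f(x)|\,\|c\|=o(\|\sum_k c_k\,df_k(x)\|)$ as $x\to Y$ with $f(x)\in\CC^\ast U$, which is a strong Lojasiewicz-type inequality for the map $f$. Nothing you invoke gives it:
\begin{enumerate}
\item[(i)] The condition $a\notin C_{\{0\}}(\Delta_f)$ only says that the $df_k(x)$ are independent at each such $x$.
\item[(ii)] The Thom condition only controls the limits of the $d$-planes $\mathrm{span}\{df_k(x)\}$. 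These limits do not see the rate at which the smallest singular value of the Jacobian degenerates compared with $|f|$.
\item[(iii)] Your curve-selection argument does not produce it either. Along an arc with $\|c(t)\|=1$ and $f(x(t))=g_mt^m+\cdots$, differentiating gives only $\sum_k c_k(0)g_{m,k}=0$. This says limit covectors kill the Euler field $\sum_k s_k\partial_{s_k}$, which is the automatic conicity of $\mathrm{Sp}$. That settles $d=1$, but says nothing about the other $d-1$ directions, and those are exactly where the Thom condition has to be used.
\end{enumerate}
In fact, since $\mathcal{G}_0[n]$ is perverse, $\msupp(\mathcal{G}_0)$ is exactly this limit set. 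So the strict inequality is equivalent to the proposition (on a slightly smaller $U$), and postulating it restates the claim rather than reducing it.

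Your fallback points toward the paper's proof but leaves out what makes it work.
\begin{enumerate}
\item[(i)] Local triviality of the Milnor fibration only gives isomorphic stalks $\mathcal{G}_{0,(q,s)}$ for $s\in U$. That does not imply local constancy along $\{q\}\times U$. What is needed, via \cite[Proposition 5.4.5 (ii)]{KS90}, is that the restriction maps $\rsect(\{q\}\times B(p;\delta_0);\mathcal{G}_0)\to\mathcal{G}_{0,(q,p')}$ are isomorphisms.
\item[(ii)] The deformation with $\varphi=|s-s_0|$ in Proposition \ref{prop-milfib} is run on $\CC_{\{\tau s=f(x)\}}$ for a fixed $\tau\neq 0$. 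There it only needs the absence of critical points and transversality of the fibres to $\partial B(0;\e)$. Running it directly on $\mathcal{G}_0$ would need micro-support information about $\mathcal{G}_0$ that you have not established.
\end{enumerate}
The paper bridges these points with the radius-selection results of Section \ref{sec-pre}, in three steps:
\begin{enumerate}
\item[(a)] It finds $\delta_0$ and a subanalytic $\phi$ such that the bad set $E$ misses $\{0<\e<\phi(\delta)\}$. This gives $\rsect(B(q;\e)\times B(p;\delta_0);\mathcal{G}_0)\simto\rsect(\{q\}\times B(p;\delta_0);\mathcal{G}_0)$ for $0<\e<\phi(\delta_0)$.
\item[(b)] It uses the Thom condition to get $\rsect(B(q;\e)\times B(p;\delta_0);\mathcal{G}_0)\simto\rsect(B(q;\e)\times\{p'\};\mathcal{G}_0)$ for all $p'\in B(p;\delta_0)$ and $\e\leq\e_1$.
\item[(c)] It concludes with $\rsect(B(q;\e)\times\{p'\};\mathcal{G}_0)\simto\mathcal{G}_{0,(q,p')}$ for small $\e$.
\end{enumerate}
Your sketch contains only the middle step, and only as an assertion.
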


\begin{proof}
Let $U\subset \CC^d \setminus \mathrm{C}_{\{0\}}(\Delta_f)$ be a neighborhood 
of $a$ satisfying the condition in Definition \ref{def-thom} and 
set $\mathcal{F} \coloneq \mathrm{Sp}_{X\times \{0\} \mid X\times \CC^d}
({i_f}_\ast \CC_X) \in \BDC_c(X\times \CC^d)$. 
For $\delta>0$ and a point $p\in \CC^d$ let $B(p;\delta)\subset \CC^d$ 
be the open ball centered at $p$ with radius $\delta>0$. 
Then by \cite[Proposition 5.4.5 (ii)]{KS90} it suffices to show that for any 
$q\in Y\subset X=\CC^n$ and $p\in U$ there exists $0<\delta_0 \ll 1$ such that 
$\overline{B(p;\delta_0)} \subset U$ and the canonical morphisms 
\begin{equation}
\rsect(\{q\}\times B(p;\delta_0);\mathcal{F})\longrightarrow 
\mathcal{F}_{(q,p^\prime)} \quad (p^\prime \in B(p;\delta_0))
\end{equation}
are isomorphisms. 
For this purpose, we shall use the results in Section \ref{sec-pre}.
Let $\mathcal{S}$ be a complex analytic stratification of $X\times \CC^d$ 
adapted to $\mathcal{F} \in \BDC_c(X\times \CC^d)$ and for $\e>0$ let $B(q;\e) \subset X=\CC^n$ 
be the open ball centered at $q\in Y\subset X=\CC^n$ in $X=\CC^n$ with radius $\e>0$. 
Then by the arguments in Section \ref{sec-pre} 
there exist $0<\e_0 \ll 1$ and a 1-dimensional subanalytic subset $E\subset \RR^2$ 
contained in $\{(\e,\delta)\in\RR^2 \mid 0<\e \leq \e_0, \delta>0 \} \subset\RR^2$ 
such that for any $0<\e \leq \e_0$ and $\delta>0$ satisufying the condition 
$(\e, \delta)\not \in E$ the three submanifolds 
\begin{equation}
\partial B(q;\e) \times \partial B(p;\delta), \quad 
\partial B(q;\e) \times B(p;\delta), \quad 
B(q;\e) \times \partial B(p;\delta)
\end{equation}
of $X\times \CC^d$ intersect any stratum $S\in \mathcal{S}$ in $\mathcal{S}$ transversally.
Note that the projection $\RR^2 \longrightarrow \RR$ ($(\e, \delta) \longmapsto \delta$) 
is proper on that closure $\overline{E}$ of $E$ in $\RR^2$. 
We thus can apply the result in Goresky-MacPherson \cite[page 43]{GM88} to show 
that there exist $0<\delta_0 \ll 1$ and a subanalytic funcntion $\phi 
\colon (0,\delta_0) \longrightarrow \RR_{>0}$ 
such that $\overline{B(p;\delta_0)} \subset U$ and 
\begin{equation}
\{ (\e,\delta)\in \RR^2 \mid 0<\delta \leq \delta_0, 0<\e <\phi(\delta) \} \cap E =\varnothing.
\end{equation}
Then by the non-characteristic deformation lemma \cite[Proposition 2.7.2]{KS90} 
we obtain an isomorphism 
\begin{equation}
\rsect(B(q;\e)\times B(p;\delta_0);\mathcal{F}) \simto 
\rsect(\{q\}\times B(p;\delta_0);\mathcal{F})
\end{equation}
for any $0<\e <\phi(\delta_0)$. 
Let $Y=\bigsqcup_{\alpha \in A}Y_{\alpha}$ be a complex analytic Whitney stratification 
of $Y\subset X=\CC^n$ and take $0<\e_1 \leq \min{\{\e_0, \phi(\delta_0)\}}$ 
such that for any $0<\e \leq \e_1$ and $\alpha \in A$ the submanifold 
$\partial B(q;\e) \subset X=\CC^n$ intersects $Y_{\alpha} \subset X=\CC^n$ transversally. 
Then as in the proof of Proposition \ref{prop-milfib}, by using our assumption and the 
non-characteristic deformation lemma \cite[Proposition 2.7.2]{KS90} 
we can show that for any $p^\prime \in B(p;\delta_0)$ there exist isomorphisms 
\begin{equation}
\rsect(B(q;\e)\times B(p;\delta_0) ;\mathcal{F})\simto 
\rsect(B(q;\e)\times \{p^\prime\};\mathcal{F}) \quad (0<\e\leq \e_1).
\end{equation}
Moreover, for $0<\e \ll \e_1$ we have an isomorphism 
\begin{equation}
\rsect(B(q;\e)\times \{p^\prime\};\mathcal{F}) \simto \mathcal{F}_{(q,p^\prime)}
\end{equation}
and hence obtain the assertion. This completes the proof.
\end{proof}
Now recall that the Verdier specialization functor 
\begin{equation}
\mathrm{Sp}_{X\times \{0\} \mid X\times \CC^d}(\cdot)\colon 
\BDC_c(X\times \CC^d) \longrightarrow \BDC_c(X\times \CC^d)
\end{equation}
preserves the perversity and consider the perverse sheaf 
\begin{equation}
\mathcal{G} \coloneq \mathrm{Sp}_{X\times \{0\} \mid X\times 
\CC^d}({i_f}_\ast \CC_X[n]) \quad \in \Perv(\CC_{X\times \CC^d})
\end{equation}
on $\mathrm{C}_{X\times \{0\}}(X\times \CC^d) \simeq X\times \CC^d$. 
Then, in the situation of Proposition \ref{prop-versp}, there 
exists a perverse sheaf $G\in \Perv(\CC_X)$ 
on $X$ such that for any point $p\in U \subset \CC^d \setminus 
\mathrm{C}_{\{0\}}(\Delta_f)$ 
we have an isomorphism 
\begin{equation}
\mathcal{G}|_{X\times \{p\}}\simeq G[d].
\end{equation}
In the situation of Proposition \ref{prop-milfib}, let $i_a\colon 
X\times \{a\} \longhookrightarrow X\times \CC^d$ 
and $\widetilde{i_a} \colon X\times \{a\} \times \CC \longhookrightarrow 
X\times \CC^d \times \CC =
\widetilde{(X\times \CC^d)_{X\times \{0\}}}$ 
be the inclusion maps and for the projection 
$t\colon X\times \CC^d \times \CC \longrightarrow \CC$ identify 
$t^{-1}(0)=X\times \CC^d \times \{0\}$ 
with $X\times \CC^d$.
Then by the proof of Proposition \ref{prop-milfib} and 
\cite[Theorem 2.5]{Tak25} the natural morphism 
\begin{align}
G&\simeq \mathcal{G}|_{X\times \{a\}}[-d] \simeq i_a^{-1} 
\psi_t(p^{-1}{i_f}_\ast \CC_X[n-d]) \\
& \longrightarrow i_a^{-1}\psi_t(\widetilde{i_a}_\ast 
\widetilde{i_a}^{-1} p^{-1}{i_f}_\ast \CC_X[n-d]) \\
& \simeq i_a^{-1}{i_a}_\ast \psi_t(\CC_{\{ta=f(x)\}}[n-d]) 
\simeq \psi_t(\CC_{\{ta=f(x)\}}[n-d])
\end{align}
is an isomorphism. 

\section{Euler obstructions of complex hypersurfaces}\label{sec-hypersurface}
In this section, based on the results in \cite{FKT26}, 
we prove a formula for the Euler obstructions of complex hypersurfaces 
(with possibly non-isolated singular points) 
in $X=\CC^n$. 
Let $f\colon X \longrightarrow \CC$ be a holomorphic function on $X=\CC^n$ 
such that $f(0)=0$ and set $Y\coloneq \{f=0\} \subset X=\CC^n$. 
Assume that $f$ is reduced i.e. it generates the defining ideal 
$\mathcal{I}_Y \subset \mathcal{O}_X$ of $Y$. 
Let $Y=\bigsqcup_{\alpha \in A}Y_{\alpha}$ be a Whitney stratification of $Y=\{f=0\} \subset X$ 
consisting of connected strata $Y_\alpha \ (\alpha \in A)$. 
Then it is well-known that the Euler obstruction 
$\mathrm{Eu}_Y\colon Y\longrightarrow \ZZ$ of $Y$ is constant on each stratum $Y_\alpha$ 
(see e.g. \cite[Theorem 8.1.1 (4)]{BSS09}). 
Our objective here is to describe its value $\mathrm{Eu}_Y(0) \in \ZZ$ 
at the origin $0\in Y \subset X=\CC^n$. 
We denote $\alpha \in A$ such that $0\in Y_\alpha$ by $\alpha_0$. 
Then we may assume that $\dim Y_{\alpha_0}=0$ i.e. $Y_{\alpha_0}=\{0\}$. 
Indeed, suppose that to the contrary we have $\dim Y_{\alpha_0} >0$. 
In such a case, we can take a linear subspace $W \ (\simeq \CC^{n-\dim Y_{\alpha_0}}) \subset X=\CC^n$ 
of dimension $n-\dim Y_{\alpha_0}$ which intersects $Y_{\alpha_0}$ 
transversally at the origin $0\in Y_{\alpha_0}$. 
Namely $W$ is a normal slice of $Y_{\alpha_0}$ at the origin. 
Then by the Whitney conditions of the stratification $Y=\bigsqcup_{\alpha \in A} Y_\alpha$, 
we have $\mathrm{Eu}_Y(0)=\mathrm{Eu}_{Y\cap W}(0)$ and hence can 
replace $Y\subset X=\CC^n$ by $Y\cap W \subset W\simeq \CC^{n-\dim Y_{\alpha_0}}$. 
Let $i_f \colon X \longhookrightarrow X\times \CC_t \ (x\longmapsto (x,f(x)))$ be the graph 
embedding of $f\colon X\longrightarrow \CC$ and $t\colon X\times \CC \longrightarrow \CC$ 
the projection.
Then we define a perverse sheaf $G$ on $X=\CC^n$ by 
\begin{equation}
G\coloneq \psi_t({i_f}_\ast (\CC_X[n]))[-1] \quad \in \Perv(\CC_X).
\end{equation}
Note that by \cite[Theorem 2.5]{Tak25} for the inclusion map $\iota \colon Y \longhookrightarrow X$ 
there exists an isomorphism $G\simeq \iota_\ast \psi_f(\CC_X[n-1])$. 
By the Whitney stratification $Y=\bigsqcup_{\alpha \in A}Y_\alpha$ of $Y$ 
we naturally endow $X=\CC^n$ with the one 
\begin{equation}
X=\left( \bigsqcup_{\alpha \in A}Y_\alpha\right) \sqcup (X\setminus Y).
\end{equation}
Then by applying Proposition \ref{prop-adapt} to the perverse sheaf 
${i_f}_\ast(\CC_X [n]) \in \Perv(\CC_{X\times \CC})$ on $X\times \CC$ 
we see that $G\in \Perv(\CC_X)$ is adapted to this Whitney stratification of $X=\CC^n$. 
By Lemma \ref{lem-perversity} this implies that we have 
\begin{equation}
\msupp (G) \subset \bigsqcup_{\alpha \in A} T_{Y_\alpha}^\ast X.
\end{equation}
To begin with, let us give a formula for the multiplicity $m_0\geq 0$ of 
$[T_{Y_{\alpha_0}}^\ast X]=[T_{\{0\}}^\ast X]$ in the characteristic cycle $\CCyc(G)$ 
of the perverse sheaf $G$. 
First, note that for a generic linear form $\ell \colon X=\CC^n \longrightarrow \CC$ 
on $X=\CC^n$ we have the condition 
\begin{equation}
d\ell(0) =(0;dx_1) \in \Omega \setminus \bigcup_{\alpha: \alpha 
\neq \alpha_0}\overline{T_{Y_\alpha}^\ast X},
\end{equation}
where $\Omega \subset T_{\{0\}}^\ast X$ is an open dense subset 
satisfying some conditions (see Appendix \ref{ap-A} for the 
details). We fix such $\ell$ once and for all in this section and define a hyperplane $H\subset X$ 
of $X=\CC^n$ by $H\coloneq \ell^{-1}(0)$. Let $M_{Y,0} \subset X\setminus Y$ 
(resp. $M_{Y\cap H, 0}\subset H\setminus (Y\cap H)$) be the Milnor fiber of the 
complex hypersurface $Y\subset X=\CC^n$ (resp. $Y\cap H \subset H \simeq \CC^{n-1}$) 
at the origin $0\in Y$ (resp. $0\in Y\cap H$). 
Then by the proof of \cite[Theorem 5.5]{FKT26} we obtain 
\begin{equation}\label{eq-mult}
m_0=(-1)^{n-1} \left\{\chi(M_{Y,0}) -\chi(M_{Y\cap H, 0})\right\}.
\end{equation}
Also for $\alpha \in A$ such that $\alpha \neq \alpha_0$, $0\in \overline{Y_\alpha}$ 
and $\dim Y_\alpha < \dim Y$ we can describe the multiplicity of $[T_{Y_\alpha}^\ast X]$ 
in $\CCyc(G)$ as follows. 
Note that for such $\alpha \in A$ we have $\dim Y_\alpha \geq 1$. 
Let $W_\alpha \ (\simeq \CC^{n-\dim Y_\alpha}) \subset X=\CC^n$ be an affine subspace 
of dimension $n-\dim Y_\alpha$ which intersects $Y_\alpha$ transversally at a point $p_\alpha \in Y_\alpha$. 
We call it a normal slice of $Y_\alpha$ at $p_\alpha \in Y_\alpha$. 
Then there exists a neighborhood $U_\alpha \subset W_\alpha$ of $p_\alpha \in W_\alpha \cap Y_\alpha$ 
in $W_\alpha \simeq \CC^{n-\dim Y_\alpha}$ such that 
$\{Y_{\alpha^\prime} \cap U_\alpha \mid \alpha^\prime \in A\}$ is a Whitney stratification 
of $Y\cap U_\alpha \subset W_\alpha$ and $G_\alpha \coloneq G|_{U_\alpha}[-\dim Y_\alpha] \in \BDC_c(U_\alpha)$ 
is a perverse sheaf on $U_\alpha \subset W_\alpha \simeq \CC^{n-\dim Y_\alpha}$. 
Moreover, as in the proof of Proposition \ref{prop-milfib} we can easily show that 
there exists an isomorphism 
\begin{equation}
G_\alpha \simeq \psi_t({i_f}_\ast (\CC_X)|_{U_\alpha \times \CC}[n-1-\dim Y_\alpha]). 
\end{equation}
Let $m_\alpha \geq 0$ be the multiplicity of $[T_{\{p_\alpha\}}^\ast U_\alpha]$ 
in the characteristic cycle $\CCyc({G_\alpha})$ of the perverse sheaf $G_\alpha \in \Perv(\CC_{U_\alpha})$ 
on $U_\alpha$. 
Then the multiplicity of $[T_{Y_\alpha}^\ast X]$ in $\CCyc(G)$ 
is equal to $(-1)^{\dim {Y_\alpha}} m_\alpha \in \ZZ$. 
Indeed, for a point 
\begin{equation}
q_{\alpha} \in T_{\{p_{\alpha} \}}^\ast U_{\alpha} \setminus 
\bigcup_{\alpha^{\prime} \not= \alpha, \ Y_{\alpha} \subset 
\overline{Y_{\alpha^{\prime}}}} 
\overline{T_{Y_{\alpha^{\prime}} \cap U_{\alpha} }^\ast U_{\alpha}}
\end{equation}
we take its sufficiently small neighborhood $\Omega_{q_{\alpha}}$ in $T_{\{p_{\alpha}
\}}^\ast U_{\alpha}$. Then by the perversity of $G_{\alpha}$ there exists an isomorphism 
$G_{\alpha} \simeq \CC_{\{p_{\alpha} \}}^{\oplus m_{\alpha}}$ 
in the localized category $\BDC(U_{\alpha};\Omega_{q_{\alpha}})$ (for the 
definition, see \cite[Definition 6.1.1]{KS90}). Let 
\begin{equation}
T^\ast U_{\alpha} \underset{\rho}{\longtwoheadleftarrow} 
 U_{\alpha} \times_X T^\ast X \underset{\varpi}{\longhookrightarrow} T^\ast X 
\end{equation}
be the natural morphisms associated to the inclucion map $U_{\alpha} \longhookrightarrow X$. 
We take a suffciently small neighborhood $\widetilde{\Omega_{q_{\alpha}}}$ 
of $\varpi \rho^{-1} \Omega_{q_{\alpha}} \subset T^\ast X$ 
in $T^\ast X$. 
Then by the perversity of $G$ there exists a non-negative integer 
$m\geq 0$ such that we have 
an isomorphism $G\simeq \CC_{Y_{\alpha}}^{\oplus m} [\dim Y_{\alpha}]$ 
in the localized category 
$\BDC(X;\widetilde{\Omega_{q_{\alpha}}})$. 
This implies that the multiplicity of $[T_{Y_{\alpha}}^\ast X]$ in $\CCyc(G)$ 
is equal to $(-1)^{\dim Y_{\alpha}}m$. 
Moreover, by \cite[Proposition 5.4.13 (i)]{KS90} there exists an isomorphism 
\begin{equation}
G_{\alpha} =G|_{U_{\alpha}}[-\dim Y_{\alpha}] \simeq \CC_{\{p_{\alpha} \}}^{\oplus m}
\end{equation}
in $\BDC(U_{\alpha}; \Omega_{q_{\alpha}})$. We thus obtain 
the desired equality $m=m_{\alpha}$. 
Let $H_\alpha \subset W_\alpha$ be a generic hyperplane in $W_\alpha \simeq \CC^{n-\dim Y_\alpha}$ 
passing through the point $p_\alpha \in W_\alpha \cap Y_\alpha$. 
Let $M_{Y\cap U_\alpha, p_\alpha} \subset U_\alpha \setminus (Y\cap U_\alpha)$ 
(resp. $M_{Y\cap H_\alpha \cap U_\alpha, p_\alpha} \subset (H_\alpha 
\cap U_\alpha) \setminus (Y\cap H_\alpha \cap U_\alpha)$) 
be the Milnor fiber of the complex hypersurface $Y\cap U_\alpha \subset U_\alpha$ 
(resp. $Y\cap H_\alpha \cap U_\alpha \subset H_\alpha \cap U_\alpha$) at the point 
$p_\alpha \in Y\cap U_\alpha$ (resp. $p_\alpha \in Y\cap H_\alpha \cap U_\alpha$). 
Then by the proof of \cite[Theorem 5.5]{FKT26} we obtain the following result. 
\begin{theorem}\label{thm-mult-hyp}
For any $\alpha \in A$ such that $\alpha \neq \alpha_0, 0\in \overline{Y_\alpha}$ and 
$\dim Y_\alpha <\dim Y$ we have 
\begin{equation}
m_\alpha =(-1)^{\dim Y -\dim Y_\alpha} \left\{\chi(M_{Y\cap U_\alpha, p_\alpha})-
\chi(M_{Y\cap H_\alpha \cap U_\alpha, p_\alpha})\right\}.
\end{equation}
\end{theorem}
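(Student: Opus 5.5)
The plan is to reduce the statement to the already-established formula \eqref{eq-mult} for the point stratum, applied on the normal slice. By construction, $G_\alpha = G|_{U_\alpha}[-\dim Y_\alpha]$ is a perverse sheaf on $U_\alpha \subset W_\alpha \simeq \CC^{n'}$, where $n' \coloneq n-\dim Y_\alpha$. We have the isomorphism
\begin{equation*}
G_\alpha \simeq \psi_t\bigl({i_f}_\ast(\CC_X)|_{U_\alpha\times\CC}[n'-1]\bigr),
\end{equation*}
so $G_\alpha$ is exactly the (shifted) nearby cycle perverse sheaf of the hypersurface $Y\cap U_\alpha=\{f|_{U_\alpha}=0\}$ in $U_\alpha$. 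The Whitney stratification $\{Y_{\alpha'}\cap U_\alpha\}$ has the point $\{p_\alpha\}=Y_\alpha\cap U_\alpha$ as a stratum. Thus we are literally in the situation treated before \eqref{eq-mult}, with $(X,Y,0,n)$ replaced by $(U_\alpha, Y\cap U_\alpha, p_\alpha, n')$.

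The first step is to check the hypotheses needed for \eqref{eq-mult}. The function $f|_{U_\alpha}$ must be reduced. This holds because $W_\alpha$ is transverse to $Y_\alpha$ and the stratification is Whitney: $Y\cap U_\alpha$ is locally a product factor of $Y$ near $p_\alpha$, so $f|_{U_\alpha}$ generates the defining ideal after shrinking $U_\alpha$. Next, by Proposition \ref{prop-adapt}, $G_\alpha$ is adapted to the induced stratification. Finally, the generic hyperplane $H_\alpha$ is chosen so that $d\ell_\alpha(p_\alpha)$ lies in the open dense set $\Omega$ of Appendix \ref{ap-A}, away from $\overline{T^\ast_{Y_{\alpha'}\cap U_\alpha}U_\alpha}$ for $\alpha'\neq\alpha$.

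With these in place, I would rerun the argument of \cite[Theorem 5.5]{FKT26} on $U_\alpha$. Its ingredients are as follows. The value $\chi(G_\alpha)(p_\alpha)$ equals $(-1)^{n'-1}\chi(M_{Y\cap U_\alpha,p_\alpha})$ by the stalk description of nearby cycles. The restriction to $H_\alpha$ is non-characteristic off the zero-covector at $p_\alpha$, so $\chi(G_\alpha|_{H_\alpha})(p_\alpha)$ is computed by $M_{Y\cap H_\alpha\cap U_\alpha,p_\alpha}$. The multiplicity of $[T^\ast_{\{p_\alpha\}}U_\alpha]$ is then the vanishing cycle of $\ell_\alpha$ at $p_\alpha$, namely $\chi\phi_{\ell_\alpha}(G_\alpha)_{p_\alpha}$. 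This is given by the difference of the two Euler characteristics, up to the sign $(-1)^{n'-1}$, which is exactly \eqref{eq-mult} in dimension $n'$.

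The remaining point is the sign. Since $Y$ is a hypersurface, $\dim Y=n-1$, so
\begin{equation*}
n'-1 = n-1-\dim Y_\alpha = \dim Y-\dim Y_\alpha,
\end{equation*}
which produces the stated sign. The main obstacle is the transfer of \eqref{eq-mult}, specifically verifying that the genericity of $H_\alpha$ and the Milnor-fiber identifications survive restriction to the slice. I would handle this via \cite[Proposition 5.4.13]{KS90}, as in the multiplicity comparison $m=m_\alpha$ done just before the statement.
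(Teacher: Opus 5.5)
Your overall reduction is the paper's: it obtains Theorem~\ref{thm-mult-hyp} by rerunning the proof of \cite[Theorem 5.5]{FKT26}, i.e.\ \eqref{eq-mult}, on the slice $U_\alpha$, using $G_\alpha\simeq\psi_t({i_f}_\ast(\CC_X)|_{U_\alpha\times\CC}[n-1-\dim Y_\alpha])$. Your sign bookkeeping $n'-1=\dim Y-\dim Y_\alpha$ is correct.

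However, one step in your account of what that argument does is false. You claim that $\chi(G_\alpha|_{H_\alpha})(p_\alpha)$ is computed by $M_{Y\cap H_\alpha\cap U_\alpha,p_\alpha}$. The stalk of a restriction at $p_\alpha$ is just the stalk of $G_\alpha$, so in fact $\chi(G_\alpha|_{H_\alpha})(p_\alpha)=(-1)^{n'-1}\chi(M_{Y\cap U_\alpha,p_\alpha})$.

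What you are tacitly using is the base change $\psi_f(\CC_{U_\alpha})|_{H_\alpha}\simeq\psi_{f|_{H_\alpha}}(\CC_{H_\alpha})$. This holds away from $p_\alpha$, by transversality to the strata other than $\{p_\alpha\}$ plus Thom's $a_f$ condition. It fails exactly at $p_\alpha$, and the defect there is $\pm m_\alpha$. If your identification were right, the two Euler characteristics in the statement would coincide and you would conclude $m_\alpha=0$. Non-characteristicity of $H_\alpha$ off $p_\alpha$ says nothing about $p_\alpha$ itself. At $p_\alpha$ it genuinely fails when $m_\alpha>0$, since $0\neq d\ell_\alpha(p_\alpha)\in T^\ast_{\{p_\alpha\}}U_\alpha\subset\msupp(G_\alpha)$.

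The term that actually enters is the complex link: $m_\alpha=\chi(G_\alpha)_{p_\alpha}-\chi(\psi_{\ell_\alpha}G_\alpha)_{p_\alpha}$, where $\psi_{\ell_\alpha}(G_\alpha)_{p_\alpha}\simeq\rsect(B(p_\alpha;\e)\cap\ell_\alpha^{-1}(\eta);G_\alpha)$ with $0<|\eta|\ll\e$. The real content of \eqref{eq-mult} is showing that this Euler characteristic equals $(-1)^{n'-1}\chi(M_{Y\cap H_\alpha\cap U_\alpha,p_\alpha})$. Equivalently, $\chi(M_{Y\cap U_\alpha,p_\alpha})-\chi(M_{Y\cap H_\alpha\cap U_\alpha,p_\alpha})=(-1)^{n'-1}m_\alpha$.

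This is a polar (L\^e-type) statement, not a non-characteristic restriction. Choose $d\ell_\alpha(p_\alpha)\in\Omega$. Then for $0<|\tau|\ll1$, $\ell_\alpha|_{f^{-1}(\tau)\cap U_\alpha}$ has exactly $m_\alpha$ non-degenerate critical points near $p_\alpha$; this count is the input from \cite{FKT26}. Each gives a critical point of $|\ell_\alpha|^2$ of Morse index $n'-1$. Hence the non-characteristic deformation from $B(0;\e)\times B(p_\alpha;\delta)$ to $\{0\}\times B(p_\alpha;\delta)$ in $U_\alpha\simeq L\times H_\alpha$ changes $\chi$ by $(-1)^{n'-1}m_\alpha$, exactly as in the proof of Theorem~\ref{thm-mult}(ii).

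With that replacement, or simply by quoting \eqref{eq-mult} on the slice without re-deriving it, your proof coincides with the paper's.

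A minor point: reducedness of $f|_{U_\alpha}$ is not needed for this formula. Also, Whitney conditions give only topological, not analytic, local triviality. The correct reason reducedness holds is that $W_\alpha$ is transverse to the top stratum, where $df\neq0$.
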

From this, we obtain the following main theorem in this section. 
\begin{theorem}\label{thm-main}
We have an equality 
\begin{align}
\mathrm{Eu}_Y(0)=&\chi(M_{Y\cap H, 0}) \\
&-\sum_{\substack{\alpha \neq \alpha_0, 0\in \overline{Y_\alpha} \\ \dim 
Y_\alpha <\dim Y}}\left\{\chi(M_{Y\cap U_\alpha, p_\alpha})-\chi(M_{Y\cap 
H_\alpha \cap U_\alpha, p_\alpha})\right\} \cdot \mathrm{Eu}_{\overline{Y_\alpha}}(0).
\end{align}
\end{theorem}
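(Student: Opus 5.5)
We will prove Theorem \ref{thm-main} by applying Kashiwara's index theorem to the perverse sheaf $G=\psi_t({i_f}_\ast(\CC_X[n]))[-1]\simeq \iota_\ast\psi_f(\CC_X[n-1])$ and evaluating both sides at the origin $0\in Y$. Since $\msupp(G)\subset\bigsqcup_{\alpha}T^\ast_{Y_\alpha}X$, the characteristic cycle has the form $\CCyc(G)=\sum_\alpha(-1)^{\dim Y_\alpha}\widetilde m_\alpha[T^\ast_{Y_\alpha}X]$. By Proposition \ref{prop-mult} and the index theorem this gives
\begin{equation*}
\chi(G)(0)=\sum_{\alpha\in A,\ 0\in\overline{Y_\alpha}}(-1)^{\dim Y_\alpha}\widetilde m_\alpha\cdot\mathrm{Eu}_{\overline{Y_\alpha}}(0).
\end{equation*}
The left-hand side is $\chi(G)(0)=(-1)^{n-1}\chi(M_{Y,0})$ from the stalk description of nearby cycles.

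The contributions are then sorted into three groups. The first group is the top-dimensional strata $Y_\alpha$ with $\dim Y_\alpha=n-1$. These lie in $Y_{\mathrm{reg}}$ by Lemma \ref{lem-hironaka}, where $G\simeq\CC_{Y_\alpha}[n-1]$ because the Milnor fiber at a smooth point of a reduced hypersurface is a point. Hence $\widetilde m_\alpha=1$. The union of their closures is $Y$, and by additivity of Euler obstructions over irreducible components, $\sum_{\dim Y_\alpha=n-1}\mathrm{Eu}_{\overline{Y_\alpha}}=\mathrm{Eu}_Y$. This group therefore contributes $(-1)^{n-1}\mathrm{Eu}_Y(0)$. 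Some care is needed here if a connected top stratum is not irreducible-closure-like, but after grouping by irreducible components the sum is still $\mathrm{Eu}_Y$.

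The second group is the point stratum $\alpha_0$. It contributes $m_0$, which equals $(-1)^{n-1}\{\chi(M_{Y,0})-\chi(M_{Y\cap H,0})\}$ by \eqref{eq-mult}.

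The third group is the intermediate strata with $\alpha\neq\alpha_0$, $0\in\overline{Y_\alpha}$ and $\dim Y_\alpha<\dim Y$. By the discussion preceding Theorem \ref{thm-mult-hyp}, the coefficient is $(-1)^{\dim Y_\alpha}\widetilde m_\alpha=(-1)^{\dim Y_\alpha}m_\alpha$ (so $\widetilde m_\alpha=m_\alpha$). By Theorem \ref{thm-mult-hyp}, this equals
\begin{equation*}
(-1)^{\dim Y}\{\chi(M_{Y\cap U_\alpha,p_\alpha})-\chi(M_{Y\cap H_\alpha\cap U_\alpha,p_\alpha})\}.
\end{equation*}
Here $\dim Y=n-1$.

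Substituting everything into the index formula gives
\begin{align*}
(-1)^{n-1}\chi(M_{Y,0})=&\,(-1)^{n-1}\mathrm{Eu}_Y(0)+(-1)^{n-1}\{\chi(M_{Y,0})-\chi(M_{Y\cap H,0})\}\\
&+(-1)^{n-1}\sum_\alpha\{\chi(M_{Y\cap U_\alpha,p_\alpha})-\chi(M_{Y\cap H_\alpha\cap U_\alpha,p_\alpha})\}\,\mathrm{Eu}_{\overline{Y_\alpha}}(0).
\end{align*}
Cancelling $\chi(M_{Y,0})$ and dividing by $(-1)^{n-1}$ yields exactly the claimed formula.

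The main obstacle is keeping the signs consistent between three conventions: the shift in $G$, the sign $(-1)^{\dim X_\alpha}$ in the perverse form of the index theorem, and the sign in $\CCyc(\CC_{Y_\alpha}[k])$. The step that most needs checking is the first group, namely that $G|_{Y_{\mathrm{reg}}}\simeq\CC_{Y_{\mathrm{reg}}}[n-1]$ (with no extra shift). I would verify it directly from $\psi_f(\CC_X)_x\simeq\CC$ at smooth points of the reduced hypersurface.
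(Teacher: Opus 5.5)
Your proposal is correct and is essentially the paper's proof. Both apply Kashiwara's index theorem to the perverse sheaf $G$, with the point-stratum multiplicity taken from \eqref{eq-mult} and the intermediate multiplicities from Theorem \ref{thm-mult-hyp}. The paper phrases it through $\chi(\Sol_X(\SM))=(-1)^n\chi(G)$, which only changes an overall sign. Your explicit check that the top-dimensional strata have multiplicity one and together contribute $\mathrm{Eu}_Y(0)$ fills in a step the paper leaves implicit.
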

\begin{proof}
By the Riemann-Hilbert correspondence, there exists a regular 
holonomic $\SD_X$-module $\SM$ on $X=\CC^n$ such that 
\begin{equation}
\Sol_X(\SM)[n] \simeq G=\psi_t({i_f}_\ast (\CC_X[n-1])).
\end{equation}
Then by $n-1=\dim Y$ and \cite[Theorem 2.6]{Tak25} we obtain an equality 
\begin{equation}\label{eq-chisol}
\chi(\Sol_X(\SM))(0) =(-1)^{\dim X -\dim Y} \chi (M_{Y,0}).
\end{equation}
For $\alpha \in A$ let  $\mult_{T_{Y_\alpha}^\ast X}(\SM)\geq 0$ be the multiplicity 
of $\SM$ along the Lagrangian submanifold $T_{Y_\alpha}^\ast X \subset T^\ast X$. 
Then by Proposition \ref{prop-mult} 
for any $\alpha \in A$ such that $\alpha \neq \alpha_0, 0\in \overline{Y_\alpha}$ 
and $\dim Y_\alpha < \dim Y=n-1$ we have $m_\alpha=\mult_{T_{Y_\alpha}^\ast X}(\SM)$. 
Moreover we have $m_0=\mult_{T_{\{0\}}^\ast X}(\SM)$. 
In this situation, Kashiwara's index theorem in \cite[Chapter 6]{Kas83a} asserts that 
\begin{equation}\label{eq-chi-hyp}
\begin{aligned}
&\chi(\Sol_X(\SM))(0)=(-1)^{\dim X -\dim Y} \mathrm{Eu}_Y(0) \\
&+ \sum_{\alpha \neq \alpha_0, 0\in \overline{Y_\alpha}, \dim Y_\alpha <\dim Y} 
(-1)^{\dim X - \dim Y_\alpha} m_\alpha \cdot \mathrm{Eu}_{\overline{Y_\alpha}}(0) 
+ (-1)^n m_0 \cdot \mathbf{1}_{\{0\}}.
\end{aligned}
\end{equation}
Then by \eqref{eq-mult}, Theorem \ref{thm-mult-hyp} and \eqref{eq-chi-hyp} we obtain the assertion. 
\end{proof}
In the example below, we will show that 
our main result Theorem \ref{thm-main} allows us to 
obtain a well-known value of the local Euler obstruction 
at the origin. 
\begin{example}
Let $X=\CC^3$ be the 3-dimensional complex vector space 
with coordinates $(x,y,z)$. 
For a holomorphic function $f\colon X=\CC^3 \longrightarrow \CC$ 
set $Y\coloneq f^{-1}(0)\subset X$. 
In what follows, we use the same notations as in Theorem \ref{thm-main}.  
\begin{enumerate}
\item [(i)]
Let us consider the case of $f(x,y,z)=y^2-x^2z$. 
In this situation, we can take a stratification $Y=\bigsqcup_{i=0}^2 Y_i$ as follows: 
\begin{equation}
\begin{cases}
Y_0\coloneq \{0\} \\
Y_1\coloneq \{(x,y,z)\in \CC^3 \mid x=y=0\}\setminus Y_0 \\
Y_2\coloneq Y\setminus (Y_0 \sqcup Y_1).
\end{cases}
\end{equation}
It is easy to check that this stratification satisfies the Whitney condition. 
Note that since $p_\alpha \in Y\cap H_\alpha \cap U_\alpha \subset Y\cap U_\alpha$ 
is an isolated singular point, by the classical result in \cite{Kas83a}, 
we have $\chi(M_{Y\cap H_\alpha \cap U_\alpha,p_\alpha})=\mathrm{Eu}_{Y\cap U_\alpha}(p_\alpha)$.
Thus by Theorem \ref{thm-main}, 
it suffices to calculate the Euler characteristic of 
a generic hyperplane section of $Y$ at the origin $0\in X$,  
the Milnor number of the normal slice of $Y_1$ and its Euler obstruction of a 
reference point. 
First, let us define a complex hyperplane $H$ by 
\begin{equation}
H\coloneq \{ (x,y,z)\in \CC^3 \mid x+z=0\},
\end{equation}
which passes through the origin $0\in X=\CC^3$ and 
satisfies the condition 
\begin{equation}
T_H^\ast X \cap (\overline{T_{Y_1}^\ast X} \cup \overline{T_{Y_2}^\ast X}) \subset T_X^\ast X.
\end{equation}
For such $H\subset X=\CC^3$ the complex hypersurface $Y\cap H \subset H$ 
in it has an isolated singular point at the origin and 
is defined by the polynomial $y^2+x^3$. 
Since this polynomial is of Brieskorn-Pham type, 
its Milnor number is computed as $(2-1)(3-1)=2$. 
Thus we have $\chi(M_{Y\cap H,0})=1-2=-1$. 
It follows that for $a \in \CC^\ast$ we take a normal slice 
$S\coloneq \{ (x,y,z)\in \CC^3 \mid z=a \neq 0\}$ of $Y_1\subset X=\CC^3$. 
Then $Y\cap S \subset S$ is defined by the polynomial 
$y^2-ax^2$ and its Milnor number at  
$(0,0,a)\in Y \cap S \subset S$ in $S$ is 1. 
Moreover, by the result of \cite{Kas73}, for 
$1$-dimensional varieties the value of Euler obstructions is equal to the multiplicity of it. Thus in this case the value of the Euler obstruction 
of a reference point is $2$.
Therefore, by Theorem \ref{thm-main} we obtain 
\begin{equation}
\mathrm{Eu}_Y(0) =-1-\{(1-1)-2
\}\cdot 1=1.
\end{equation}
\item[(ii)]
Let us consider the case of $f(x,y,z)=x^n+y^pz^q$, where $n,p,q\geq 2$ and 
the greatest common divisor of $n,p \ \text{and} \ q$ is $1$. Then we can take 
a Whitney stratification $Y=\bigsqcup_{i=0}^3Y_i$ as follows: 
\begin{equation}
\begin{cases}
Y_0\coloneq \{0\} \\ 
Y_1\coloneq \{(x,y,z)\in \CC^3 \mid x=y=0\}\setminus Y_0 \\
Y_2\coloneq \{(x,y,z)\in \CC^3 \mid x=z=0\}\setminus Y_0 \\
Y_3\coloneq Y\setminus (Y_0 \sqcup Y_1 \sqcup Y_2).
\end{cases}
\end{equation}
The Milnor numbers and multiplicities of the respective normal slices can be computed in the same way as in the case (i). 
For example, take a complex hyperplane $H$ by
\begin{equation}
H\coloneq \{ (x,y,z)\in \CC^3 \mid y+z=0\},
\end{equation}
which passes through the origin $0\in X=\CC^3$ and 
satisfies the condition 
\begin{equation}
T_H^\ast X \cap (\overline{T_{Y_1}^\ast X} \cup \overline{T_{Y_2}^\ast X} \cup \overline{T_{Y_3}^\ast X}) \subset T_X^\ast X.
\end{equation}
Then as in the case (i) we have $\chi(M_{Y\cap H, 0})=1-(n-1)(p+q-1)$. 
Moreover, for $a\in \CC^\ast$ we can take a normal slice 
$S\coloneq \{(x,y,z)\in \CC^3 \mid z=a\neq 0 \}\subset X=\CC^3$ of 
$Y_1\subset X=\CC^3$ at $(0,0,a) \in Y_1\cap S\subset S$. 
Then it follows that the Milnor number of $Y\cap S$ at $(0,0,a) \in Y\cap S$ 
is $(n-1)(p-1)$ and its multiplicity at $(0,0,a)\in Y\cap S$ is $\min\{n,p\}$.
The other term can be computed similarly. 
Therefore, by Theorem \ref{thm-main} we obtain 
\begin{equation}
\begin{aligned}
\mathrm{Eu}_Y(0)=&1-(n-1)(p+q-1)\\
&-\{1-(n-1)(p-1)-\min\{n,p\}\}\cdot 1 \\
&-\{1-(n-1)(q-1)-\min\{n,q\}\}\cdot 1 \\
&=\min\{n,p\} +\min\{n,q\}-n.
\end{aligned}
\end{equation}
\end{enumerate}
\end{example}

\section{Euler obstructions of complete intersection varieties}\label{Eu-CI}
In this section,  
we prove a formula for the Euler obstructions of complete intersection subvarieties
(with possibly non-isolated singular points) in $X=\CC^n$. 
We inherit the situations and the notations in Section \ref{sec-verdier}. 
Throughtout this section, we always assume that 
$f=(f_1,\ldots ,f_d)\colon X\longrightarrow \CC_s^d$ satisfies 
the Thom condition in a direction $a\in \CC^d \setminus C_{\{0\}}(\Delta_f)$. 
First, we shall study the characteristic cycle $\CCyc(G)$ of the perverse sheaf 
$G\simeq \psi_t(\CC_{\{ta=f(x)\}}[n-d])$ on $X\simeq X\times \{a\}$. 
For this purpose, let $\mathcal{S}$ be a 
Whitney stratification of the analytic subset 
\begin{equation}
\{ (x,t)\in X\times \CC \mid ta =f(x)\}=\supp(\CC_{\{ta=f(x)\}})
\end{equation}
of $X\times \CC$ such that $\supp(\CC_{\{ta=f(x)\}}) \cap (X\times \{0\}) =Y\times \{0\}$ 
(resp. $\supp(\CC_{\{ta=f(x)\}}) \setminus (X\times \{0\})$) is a union of some strata 
(resp. a stratum) in it and set $\mathcal{S}_0 \coloneq \{S\in \mathcal{S} \mid S\subset Y\times \{0\}\}$. 
Then by Lemma \ref{lem-perversity} and Proposition \ref{prop-adapt} we have 
\begin{equation}
\msupp(G) \subset \bigsqcup_{S\in \mathcal{S}_0}T_S^\ast X.
\end{equation}
Subdividing $\mathcal{S}_0$ if necessary, we may assume that the Whitney stratification 
$\mathcal{S}_0$ of $Y\times \{0\}\simeq Y$ satisfies the condition in Definition \ref{def-thom}.
First, for a zero dimensional stratum $S\in \mathcal{S}_0$ in $\mathcal{S}_0$ let us 
describe the multiplicity of $[T_S^\ast X]$ in the characteristic cycle 
$\CCyc(G)$ of $G$. 
Replacing the coordinate $x=(x_1, \ldots , x_n)$ of $X=\CC^n$ we may assume that 
$S\in \mathcal{S}_0$ is the origin $\{0\} \subset X=\CC^n$ and for the linear function 
$\ell(x)\coloneq x_1$ we have the condition 
\begin{equation}
d\ell(0)=(0; dx_1) \in \Omega \setminus \bigcup_{S\in \mathcal{S}_0,S\neq \{0\}} \overline{T_S^\ast X}.
\end{equation}
Then for $b\in \CC$ such that $0<\abs{b} \ll 1$ we obtain the transversality 
\begin{equation}
\ell^{-1}(b) \pitchfork S \quad (S\in \mathcal{S}_0,S\neq \{0\})
\end{equation}
on a neighborhood of the origin $0\in X=\CC^n$. 
Using the above coordinate $x=(x_1, \ldots , x_n)=(x_1, x^\prime)$ of $X=\CC^n$ 
we set $L\coloneq \{x^\prime =0\}$ ($=\CC_{x_1}$), 
$H\coloneq \{x_1=0\}$ ($=\CC_{x^\prime}^{n-1}$) $\subset X=\CC^n$ 
so that we have $X=L\times H$. 
For $\e, \delta >0$ let $B(0;\e) \subset L=\CC_{x_1}$ (resp. $B(0;\delta) \subset H=\CC_{x^\prime}^{n-1}$) 
be the open ball centered at $0\in L=\CC_{x_1}$ (resp. $0\in H=\CC_{x^\prime}^{n-1}$) with 
radius $\e>0$ (resp. $\delta >0$). 
Note that by our choice of the coordinate $x=(x_1, x^\prime)$ of $X=\CC^n$ 
for any $S \in \mathcal{S}_0$ such that $S \not= \{ 0 \}$ 
we have $S  \pitchfork H$. We thus obtain a 
Whitney stratification 
\begin{equation}
\mathcal{S}_0^H \coloneq \{S\cap H \mid S\in \mathcal{S}_0\}
\end{equation}
of $Y\cap H \subset H=\CC_{x^\prime}^{n-1}$ induced by $\mathcal{S}_0$. 
Moreover by the microlocal Bertini-Sard theorem \cite[Proposition 8.3.12]{KS90} 
there exists $\delta_0 >0$ such that the submanifold $\partial B(0;\delta) \subset H=\CC_{x^\prime}^{n-1}$ 
intersects strata $S\in \mathcal{S}_0^H$ in $\mathcal{S}_0^H$ transversally 
for any $0<\delta \leq \delta_0$. 
Let $m_0\geq 0$ be the multiplicity of $[T_{\{0\}}^\ast X]$ in the characteristic cycle 
$\CCyc(G)$ of the perverse sheaf $G\simeq \psi_t(\CC_{\{ta=f(x)\}}[n-d])$ 
on $X\simeq X\times \{a\}$. 
Then we obtain the following result. 

\begin{theorem}\label{thm-mult}
In the situation as above, for the holomorphic functions 
$g_i\coloneq f_i|_H \colon H\longrightarrow \CC \quad (1\leq i\leq d)$ 
on $H=\CC_{x^\prime}^{n-1}$ and the morphism $g\coloneq (g_1, 
\ldots , g_d) \colon H \longrightarrow \CC^d$ 
associated to them assume that $Y\cap H =\{g_1=\cdots =g_d=0 \} \subset H$ is a C.I. 
and $a\in \CC^d \setminus C_{\{0\}}(\Delta_g)$. Then we have 
\begin{enumerate}
\item[\rm{(i)}] 
The morphism $g \colon H \longrightarrow \CC^d$ satisfies the Thom condition in 
the direction $a \in \CC^d \setminus C_{\{0\}}(\Delta_g)$
for the Whitney stratification $\mathcal{S}_0^H$ of $Y\cap H =\{g_1=\cdots =g_d =0 \} 
\subset H = \CC_{x^\prime}^{n-1}$. 
\item[\rm{(ii)}] 
For the Milnor fiber $M_{Y\cap H, 0} \subset H\setminus (Y\cap H)$ 
of $g \colon H\longrightarrow \CC^d$ 
at the origin $0\in Y\cap H \subset H$ (in the direction $a$) 
defined by \rm{(i)} we have 
\begin{equation}
m_0=(-1)^{n-d}\left\{ \chi(M_{Y,0})-\chi(M_{Y\cap H, 0})\right\}.
\end{equation}
\end{enumerate}
\end{theorem}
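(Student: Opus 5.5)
We would prove (i) by transporting the Thom condition for $f$ to its restriction $g=f|_H$. For (ii) we would express $m_0$ as the Euler characteristic of a vanishing cycle of $G$ along $\ell$, following the proof of \cite[Theorem 5.5]{FKT26}, and then identify the two resulting terms with $\chi(M_{Y,0})$ and $\chi(M_{Y\cap H,0})$.

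\emph{Step (i).} Take the neighborhood $U$ of $a$ given by Definition \ref{def-thom} for $f$, shrunk so that also $\CC^\ast U\cap \Delta_g=\varnothing$ near $0$; this is possible since $a\notin C_{\{0\}}(\Delta_g)$. Let $p_i\in \CC^\ast U$ and $q_i\in g^{-1}(p_i)=f^{-1}(p_i)\cap H$ with $q_i\to q\in S\cap H$, where $S\in\mathcal{S}_0$, and assume $T_{q_i}g^{-1}(p_i)\to \mathbf{T}'$. If $S=\{0\}$ there is nothing to prove, so assume $S\neq\{0\}$. Since $dg(q_i)$ has full rank $d$, the fiber $f^{-1}(p_i)$ is smooth at $q_i$ and meets $H$ transversally there, so
\[
T_{q_i}g^{-1}(p_i)=T_{q_i}f^{-1}(p_i)\cap H .
\]
Passing to a subsequence, $T_{q_i}f^{-1}(p_i)\to\mathbf{T}$, and the Thom condition for $f$ gives $\mathbf{T}\supset T_qS$. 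Because $S\pitchfork H$, we get $\mathbf{T}+H\supset T_qS+H=T_qX$. Hence $\dim(\mathbf{T}\cap H)=n-d-1=\dim \mathbf{T}'$. On the other hand $\mathbf{T}'\subset \mathbf{T}\cap H$, so $\mathbf{T}'=\mathbf{T}\cap H\supset T_qS\cap H=T_q(S\cap H)$. This proves (i). In particular Lemma \ref{lem-fib} applies to $g$, so $M_{Y\cap H,0}$ exists, and Proposition \ref{prop-milfib} applies to both $f$ and $g$.

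\emph{Step (ii).} Our choice $d\ell(0)\in\Omega\setminus\bigcup_{S\neq\{0\}}\overline{T^\ast_SX}$ means that near $d\ell(0)$ the microsupport of $G$ is only $T^\ast_{\{0\}}X$. Hence, exactly as in \cite[Theorem 5.5]{FKT26},
\[
m_0=\pm\chi\bigl(\phi_{\ell}(G)_0\bigr),\qquad \chi\bigl(\phi_{\ell}(G)_0\bigr)=\chi(G_0)-\chi\bigl(\psi_\ell(G)_0\bigr),
\]
where the sign is fixed by our convention $\CCyc(\CC_{\{0\}})=[T^\ast_{\{0\}}X]$. We would then compute the two terms separately.

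\emph{First term.} By $G\simeq \mathcal{G}|_{X\times\{a\}}[-d]$ and Proposition \ref{prop-milfib} we get $\chi(G_0)=(-1)^{n-d}\chi(M_{Y,0})$.

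\emph{Second term.} Write $X=L\times H$. By the Corollary of Section \ref{sec-pre}, applied to the function $\ell=x_1$ on $L\times H$, we get
\[
\psi_\ell(G)_0\simeq \rsect\bigl((\{b\}\times B(0;\delta))\cap X;\,G\bigr)
\]
for $0<|b|\ll\delta\le\delta_0$. Next we insert $G\simeq\psi_t(\CC_{\{ta=f(x)\}}[n-d])$ and use Proposition \ref{prop-cont} to realize this as the cohomology of
\[
\{x_1=b,\ \|x'\|<\delta,\ f(x)=\tau a\}
\]
with $0<|\tau|\ll|b|$. Finally, we let $b\to 0$. The transversality $\ell^{-1}(b)\pitchfork S$ for $S\neq\{0\}$, the transversality of $\partial B(0;\delta)$ to $\mathcal{S}_0^H$, and the Thom condition of (i) together show that this family is non-characteristic. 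The non-characteristic deformation lemma \cite[Proposition 2.7.2]{KS90} then identifies it with $\{x_1=0,\ \|x'\|<\delta,\ g=\tau a\}=M_{Y\cap H,0}$. Hence $\chi(\psi_\ell(G)_0)=(-1)^{n-d}\chi(M_{Y\cap H,0})$, with the same normalization of shifts as for the first term. Combining the two terms, with the sign convention checked on the case where $Y$ is smooth (there $m_0=0$), gives the formula in (ii).

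\emph{Main obstacle.} The hard part is the deformation argument for the second term. It needs the constants $\e,\delta,\tau,b$ to be chosen uniformly, so that the boundary pieces $\partial B(0;\delta)\times\{\ell=b\}$ stay transversal to the fibers $f^{-1}(\tau a)$ as $\tau\to0$, and this must be simultaneous with the limit $b\to0$. We would handle it with the subanalytic curve-selection argument of Lemma \ref{lem-dir}, using the Thom condition from (i) to control the limits of the tangent spaces of $g^{-1}(\tau a)$ near $\partial B(0;\delta)$.
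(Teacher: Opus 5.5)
Your part (i) is the paper's argument: pass to a subsequence so that $T_{q_i}f^{-1}(p_i)\to\mathbf{T}$, use the Thom condition for $f$, $f^{-1}(p_i)\pitchfork H$ and $S\pitchfork H$, and conclude $\mathbf{T}_0=\mathbf{T}\cap T_qH\supset T_q(S\cap H)$.

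For (ii) you take a genuinely different route. The paper works on the Milnor fibre $Z_\tau=f^{-1}(\tau a)$ throughout.
\begin{itemize}
\item It uses the Thom condition to show that $\partial B(0;\e)\times\partial B(0;\delta_2)$ and $B(0;\e)\times\partial B(0;\delta_2)$ are transversal to $Z_\tau$, uniformly in $\tau$.
\item It takes from the proof of \cite[Theorem 5.5]{FKT26} that $\ell|_{Z_\tau}$ has exactly $m_0$ nondegenerate critical points near $0$. This is where $d\ell(0)\in\Omega$ is used.
\item It then deforms from $\{x_1=0\}\cap Z_\tau\cong M_{Y\cap H,0}$ to $\{\abs{x_1}<\e\}\cap Z_\tau\cong M_{Y,0}$, crossing these points, each of real Morse index $n-d$.
\end{itemize}
You instead read off $m_0$ from the microlocal normal form $G\simeq\CC_{\{0\}}^{m_0}$ at $d\ell(0)$, via $\phi_\ell$. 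This needs only $d\ell(0)\notin\bigcup_{S\neq\{0\}}\overline{T^*_SX}$ and no critical-point count. The price is that you must identify $\psi_\ell(G)_0$ with the cohomology of $M_{Y\cap H,0}$, which the paper gets for free. The paper's route also gives the stronger statement that $M_{Y,0}$ is $M_{Y\cap H,0}$ with $m_0$ cells of dimension $n-d$ attached.

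\textbf{The sign.} Two steps need repair, and the first is the sign, which is never actually pinned down. With the triangle $G|_{\ell^{-1}(0)}\to\psi_\ell(G)\to\phi_\ell(G)\xrightarrow{+1}$ one has $\phi_\ell(\CC_{\{0\}})\simeq\CC_{\{0\}}[1]$. Hence
$\chi(\phi_\ell(G)_0)=-m_0=\chi(\psi_\ell(G)_0)-\chi(G_0)$,
which is the negative of your displayed identity. Your check on smooth $Y$ cannot detect this, since there both sides of (ii) vanish. A non-vacuous test is $f=x_1^2-x_2^2$, $\ell=x_1$, $a=1$, where $m_0=2$, $\chi(M_{Y,0})=0$, $\chi(M_{Y\cap H,0})=2$ and $n-d=1$.

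\textbf{The step ``let $b\to0$''.} This is the more serious problem: it is not a non-characteristic deformation for the reasons you give. For fixed $\tau$ with $\abs{\tau}\ll\abs{b}$, the $m_0$ critical points of $\ell|_{Z_\tau}$ have critical values inside $\{\abs{x_1}<\abs{b}\}$. These are precisely the points the paper counts, and the segment from $b$ to $0$ can pass through one of their values. In the example above with $\tau,b>0$, the slice over the critical value $\sqrt{\tau}\in(0,b)$ is one point instead of two.

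The facts you cite — $\ell^{-1}(b)\pitchfork S$, transversality of $\partial B(0;\delta)$ to $\mathcal{S}_0^H$, and the Thom condition — only control the boundary. What you need in addition is the following.
\begin{itemize}
\item The critical points of $\ell|_{Z_\tau}$ in the tube are finitely many and tend to $0$ as $\tau\to0$. This holds because limits of $T^*_{Z_\tau}X$ lie in $\bigcup_S T^*_SX$ and $d\ell(0)\notin\overline{T^*_SX}$ for $S\neq\{0\}$.
\item $0$ is a regular value of $\ell|_{Z_\tau}$, because $g^{-1}(\tau a)$ is smooth, i.e.\ $\tau a\notin\Delta_g$. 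This is exactly where the hypothesis $a\notin C_{\{0\}}(\Delta_g)$ enters (ii).
\end{itemize}
You then move $b$ to $0$ inside the connected complement of the finitely many critical values.

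Finally, Proposition \ref{prop-cont} only computes stalks. To realize $\rsect(\{b\}\times B(0;\delta);G)$ as the cohomology of $\{x_1=b,\ \|x'\|<\delta\}\cap Z_\tau$, you need uniform transversality of $\{b\}\times\partial B(0;\delta)$ to $Z_\tau$. This is supplied by the same Thom-condition contradiction argument the paper uses for $B(0;\e)\times\partial B(0;\delta_2)$.
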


\begin{proof}
Let $U\subset \CC^d$ be a sufficiently small neighborhood 
of the point $a\in \CC^d \setminus 
( \mathrm{C}_{\{0\}}(\Delta_f) \cup \mathrm{C}_{\{0\}}(\Delta_g))$ such that 
the open cone $\CC^\ast U\subset \CC^d$ satisfies the conditions 
for $f \colon X \longrightarrow \CC^d$ in Definition \ref{def-thom}. 
Then by the condition $a \notin \mathrm{C}_{\{0\}}(\Delta_f) 
\cup \mathrm{C}_{\{0\}}(\Delta_g)$ there exists 
$0 < \eta \ll 1$ such that for any point $p \in D^*_{\eta} \cap \CC^\ast U$ 
its fiber $f^{-1}(p) \subset X$ (resp. $g^{-1}(p) \subset H$) 
by $f \colon X \longrightarrow \CC^d$ (resp. $g \colon H \longrightarrow \CC^d$) 
is smooth. Suppose there exist 
a sequence of points $p_i\in D^*_{\eta} \cap \CC^\ast U$ 
($i=1,2,3, \ldots $) and that of 
points $q_i\in g^{-1}(p_i)$ ($i=1,2,3, \ldots $) satisfying the conditions 
\begin{equation}
\begin{cases}
q_i \longrightarrow ^\exists q \in S \cap H \subset Y \cap H \quad 
(i\longrightarrow +\infty), \\
T_{q_i}(g^{-1}(p_i)) \longrightarrow 
^\exists \mathbf{T}_0 \ (\simeq \CC^{n-1-d})\subset T_qH \quad 
(i\longrightarrow +\infty )
\end{cases}
\end{equation}
for some $S \in \mathcal{S}_0$ such that $S \not= \{ 0 \}$. 
By taking their subsequences, we may assume that 
\begin{equation}
T_{q_i}(f^{-1}(p_i)) \longrightarrow 
^\exists \mathbf{T} (\simeq \CC^{n-d})\subset T_qX \quad 
(i\longrightarrow +\infty ). 
\end{equation}
Then by the Thom condition of $f$ in the direction $a$ 
we obtain $\mathbf{T} \supset T_qS$. Note that by the 
smoothness of $g^{-1}(p_i) \subset H$ the complex 
submanifold $f^{-1}(p_i) \subset X$ intersects 
the hyperplane $H \subset X$ transversally. Recall also 
that for the stratum $S \in \mathcal{S}_0$ such that $S \not= \{ 0 \}$ 
we have the transversality $S \pitchfork H$. 
This implies that $\mathbf{T} + T_qH=T_qX$. Then we can 
easily see that $\mathbf{T}_0 = \mathbf{T} \cap T_qH$ and hence 
\begin{equation}
\mathbf{T}_0 \supset T_qS \cap T_qH=T_q(S \cap H). 
\end{equation}
We thus obtain the assertion (i). Let us prove (ii). 
We apply the arguments in Section \ref{sec-pre} to the 
Whitney stratification $\mathcal{S}_0$ of 
$Y\subset X=\CC^n =L\times H$. 
Then there exist $0<\e_0 \ll 1$ and a 1-dimensional subanalytic subset $E\subset \RR^2$ 
contained in 
$\{ (\e, \delta)\in \RR^2 \mid 0<\e \leq \e_0, \quad 0<\delta \leq \delta_0 \} \subset \RR^2$ 
such that for any $0<\e \leq \e_0$ and $0<\delta \leq \delta_0$ satisfying the condition 
$(\e, \delta)\not \in E$ the three submanifolds 
\begin{equation}
\partial B(0;\e)\times \partial B(0;\delta), \quad 
\partial B(0;\e)\times B(0;\delta), \quad 
B(0;\e)\times \partial B(0;\delta)
\end{equation} 
of $X=\CC^n=L\times H$ intersect any stratum $S\in \mathcal{S}_0$ in $\mathcal{S}_0$ transversally. 
As in the proof of Proposition \ref{prop-versp}, we can show that there exist 
$0<\delta_1 \leq \delta_0$ and a subanalytic function $\phi 
\colon (0,\delta_1) \longrightarrow \RR_{>0}$ 
such that 
\begin{equation}
\{ (\e, \delta) \in \RR^2 \mid 0<\delta \leq \delta_1, \ 0<\e< \phi(\delta) \} \cap E =\varnothing.
\end{equation}
Now we apply the proof of \cite[Theorem 2.6]{Tak25} to the nearby cycle preverse sheaf 
$G=\psi_t(\CC_{\{ta=f(x)\}}[n-d])$. 
Then with the help of \cite[Th\'eor\`eme 4.2.1]{BMM94} and the above transversality, 
we can show that for any $(\e, \delta)\in \RR^2$ such that $0<\delta \leq \delta_1, \quad 
0<\e <\phi(\delta)$ there exist isomorphisms 
\begin{equation}
\rsect(B(0;\e)\times B(0;\delta); \CC_{\{ \tau a=f(x)\}}[n-d]) 
\simto G_0 \quad (0<\abs{\tau} \ll 1).
\end{equation}
Recall also that by Proposition \ref{prop-milfib} we have isomorphisms 
\begin{equation}
H^jG_0 \simeq H^{j+n-d}(M_{Y,0}; \CC) \quad (j\in \ZZ).
\end{equation}
Moreover for $0<\delta \ll \delta_1$ there exist natural isomorphisms 
\begin{align}
&H^j (\{0\} \times B(0;\delta); \CC_{\{ \tau a=f(x)\}}[n-d]) \\
&\simeq H^j (B(0;\delta); \CC_{\{ \tau a=g(x)\}}[n-d]) \\
&\simeq H^{j+n-d}(M_{Y\cap H, 0}; \CC) \quad (0<\abs{\tau} \ll 1, j\in \ZZ).
\end{align}
Fix such $0<\delta \ll \delta_1$ and denote it by $\delta_2$. 
Then by the Thom condition of $f$ in the direction $a$, 
there exists $0<\e_1<\phi(\delta_2)$ such that for any $0<\e \leq \e_1$ the two submanifolds 
\begin{equation}
\partial B(0;\e)\times \partial B(0;\delta_2), \quad B(0;\e)\times \partial B(0;\delta_2)
\end{equation}
of $X=\CC^n$ intersect the complex submanifolds 
\begin{equation}
Z_\tau \coloneq \{ x\in X \mid \tau a =f(x) \}\subset X=\CC^n \quad (0< \abs\tau \ll 1)
\end{equation}
transversally. 
Indeed, otherwise, there exists a sequence of points $\tau_i \in \CC^\ast$ 
($i=1,2,3, \ldots $) such that $\tau_i \longrightarrow 0$ 
($i\longrightarrow +\infty$) and that of points 
\begin{equation}
q_i \in Z_{\tau_i} \cap ( B(0;\e_1) \setminus \{ 0 \} )
\times \partial B(0;\delta_2) \quad 
(i=1,2,3, \ldots )
\end{equation}
such that for any $i=1,2,3, \ldots $ and 
the positive real number $0< r_i < \e_1$ 
defined by the condition $q_i \in \partial B(0; r_i)
\times \partial B(0;\delta_2)$ the complex submanifold 
$Z_{\tau_i a}=f^{-1}( \tau_i a) \subset X$ does not 
intersect $\partial B(0; r_i)
\times \partial B(0;\delta_2) \subset X$ transversally at $q_i$. 
By taking their subsequences, we may assume that 
\begin{equation}
\begin{cases}
q_i \longrightarrow ^\exists q \in S \subset Y \quad 
(i\longrightarrow +\infty), \\
T_{q_i}(f^{-1}( \tau_i a)) =T_{q_i}Z_{\tau_i a} 
\longrightarrow 
^\exists \mathbf{T} (\simeq \CC^{n-d})\subset T_qX \quad 
(i\longrightarrow +\infty )
\end{cases}
\end{equation}
for some $S \in \mathcal{S}_0$ such that $S \not= \{ 0 \}$. 
Then by the Thom condition of $f$ in the direction $a$ 
we obtain $\mathbf{T} \supset T_qS$. In the case 
where $q \in  ( \overline{B(0;\e_1)} \setminus \{ 0 \} )
\times \partial B(0;\delta_2)$ we define a 
positive real number $0< r \leq \e_1$ 
by the condition $q \in \partial B(0; r)
\times \partial B(0;\delta_2)$. Then the inclusion 
$\mathbf{T} \supset T_qS$ implies that the stratum 
$S \in \mathcal{S}_0$ 
does not intersect 
$\partial B(0; r)
\times \partial B(0;\delta_2) \subset X$ transversally at $q$. 
This is a contradiction. Also in the case where 
$q \in  \{ 0 \} \times \partial B(0;\delta_2)$, 
by using the transversality $S  \pitchfork 
(\{ 0 \} \times \partial B(0;\delta_2))$ for 
the stratum $S \in \mathcal{S}_0$ such that $S \not= \{ 0 \}$ 
we obtain a contradiction. 
On the other hand, the condition $d\ell(0) =(0;dx_1) \in \Omega \subset T_{\{0\}}^\ast X$ 
implies that for $0< \abs\tau \ll 1$ the restriction 
$\ell|_{Z_\tau} : Z_\tau \longrightarrow \CC$ of the linear function 
$\ell : X=\CC^n \longrightarrow \CC$ to $Z_\tau \subset X=\CC^n$ 
has only non-degenerate (complex Morse) critical points on a neighborhood of the origin 
$0\in X=\CC^n$ and the number of them is equal 
to $m_0\geq 0$ (see the proof of \cite[Theorem 5.5]{FKT26}). 
Note that at each of such critical points the restriction of the real analytic 
function $\abs{\ell}^2 \colon X\longrightarrow \RR$ to $Z_\tau \subset X=\CC^n$ 
has a non-degenerate (real Morse) critical point of Morse index $n-d =\dim Z_\tau$. 
Then for $0< \abs\tau \ll 1$ by applying the non-characteristic deformation lemma 
\cite[Proposition 2.7.2]{KS90} to $\rsect_{L\times B(0;\delta_2)} (\CC_{Z_\tau}) \in \BDC_{\RR-c}(X)$ 
we immediately obtain the assertion (ii). 
This completes the proof.
\end{proof}
Let us consider the remaining strata $S\in \mathcal{S}_0$ such that $\dim S\geq 1$. 
For such a stratum $S\in \mathcal{S}_0$ we take an affine subspace 
$W_S (\simeq \CC^{n-\dim S})$ of $X=\CC^n$ of dimension $n-\dim S$ which intersects 
$S$ at a point $p_S \in S$ transversally. 
We call it a normal slice of $S\in \mathcal{S}_0$ at $p_S \in S$. 
Then there exists a neighborhood $U_S \subset W_S$ of $p_S \in W_S\cap S$ in 
$W_S \simeq \CC^{n-\dim S}$ such that $\{ S^\prime \cap U_S \mid S^\prime \in \mathcal{S}_0\}$ 
is a Whitney stratification of $Y\cap U_S \subset W_S$ and 
$G_S \coloneq G|_{U_S}[-\dim S] \in \BDC_c(U_S)$ is a perverse sheaf 
on $U_S \subset W_S \simeq \CC^{n-\dim S}$. 
Moreover, as in the proof of Proposition \ref{prop-milfib} we can easily show that 
there exists an isomorphism 
\begin{equation}
G_S \simeq \psi_t(\CC_{\{ ta=f(x)\}}|_{U_S\times \CC}[n-d-\dim S]).
\end{equation} 
Let $m_S\geq 0$ be the multiplicity of $[T_{\{p_S\}}^\ast U_S]$ 
in the characteristic cycle $\CCyc(G_S)$ of the perverse sheaf $G_S$ on $U_S$. 
Then as in Section \ref{sec-hypersurface} we can show that 
the multiplicity of $[T_S^\ast X]$ in the 
characteristic cycle $\CCyc(G)$ of the original perverse sheaf $G$ on $X$ 
is equal to $(-1)^{\dim S}m_S \in \ZZ$. 
Moreover, as in Theorem \ref{thm-mult} by taking a generic hyperplane $H_S \subset W_S$ 
in $W_S \simeq \CC^{n-\dim S}$ passing through the point $p_S \in W_S \cap S$ 
and assuming some conditions we can show that 
\begin{equation}
m_S =(-1)^{n-d-\dim S}\left\{ \chi(M_{Y\cap W_S, p_S})-\chi(M_{Y\cap H_S, p_S})\right\}.
\end{equation}
We leave the precise formulation to the readers (see Theorem \ref{thm-coef} below). 
From now, we consider the case where $0\in Y$ and the origin $\{0\}$ of $X=\CC^n$ 
is a stratum of $\mathcal{S}_0$. 
Then there exists a sufficiently small neighborhood $V$ of the origin $0$ in $X=\CC^n$ 
such that $S\cap V$ is connected and $\{0\} \subset \overline{S}$ for any $S\in \mathcal{S}_0$ 
such that $S\cap V\neq \varnothing$. 
We thus obtain a Whitney stratification $Y\cap V=\bigsqcup_{\alpha \in A}Y_{\alpha}$ 
of $Y\cap V$. 
We denote $\alpha \in A$ such that $Y_{\alpha}=\{0\}$ by $\alpha_0$. 
For $\alpha \in A$ such that $\alpha \neq \alpha_0$ and $\dim Y_{\alpha} <\dim(Y\cap V)$ 
let $W_{\alpha} (\simeq \CC^{n-\dim Y_{\alpha}}) \subset X=\CC^n$ be a normal 
slice of $Y_{\alpha}$ at a point $p_{\alpha} \in Y_{\alpha}$ and $U_{\alpha} \subset W_{\alpha}$ 
a sufficiently small neighborhood of $p_\alpha \in W_{\alpha} \cap Y_{\alpha}$ in $W_{\alpha}$. 
Let $m_{\alpha}\geq 0$ be the multiplicity of $[T_{\{p_{\alpha}\}}^\ast U_{\alpha}]$ 
in the characteristic cycle of the perverse sheaf 
$G_{\alpha}\coloneq G|_{U_{\alpha}}[-\dim Y_{\alpha}] 
\in \Perv(\CC_{U_{\alpha}})$ on $U_{\alpha}$ and take a 
generic hyperplane $H_{\alpha} \subset W_{\alpha}$ 
in $W_{\alpha} \simeq \CC^{n-\dim Y_{\alpha}}$ passing through 
the point $p_{\alpha} \in W_{\alpha} \cap Y_{\alpha}$. 
Then we obtain the following result. 

\begin{theorem}\label{thm-coef}
In the situation as above, for the holomorphic functions 
$g_{\alpha, i}\coloneq f_i|_{H_\alpha \cap U_\alpha} : H_\alpha \cap 
U_\alpha \longrightarrow \CC \quad (1\leq i\leq d)$ 
on $H_\alpha \cap U_\alpha \subset W_\alpha$ and the morphism 
$g_\alpha \coloneq (g_{\alpha, 1}, \ldots , g_{\alpha, d}) \colon 
H_\alpha \cap U_\alpha \longrightarrow \CC^d$ 
associated to them, assume that $Y\cap H_\alpha \cap U_\alpha 
=\{g_{\alpha,1}=\cdots =g_{\alpha,d}=0\} \subset H_\alpha \cap U_\alpha$ 
is a C.I. and 
$a\in \CC^d \setminus C_{\{0\}}(\Delta_{g_\alpha})$. Then we have 
\begin{enumerate}
\item[\rm{(i)}] 
The morphism $g_\alpha$ 
satisfies the Thom condition in the 
direction $a\in \CC^d \setminus C_{\{0\}}(\Delta_{g_\alpha})$ 
with respect to the natural Whitney 
stratification of $Y\cap H_\alpha \cap U_\alpha$ induced by $\mathcal{S}_0$. 
\item[\rm{(ii)}] 
For the Milnor fiber $M_{Y\cap H_\alpha \cap U_\alpha, p_\alpha} \subset 
(H_\alpha \cap U_\alpha ) \setminus (Y\cap H_\alpha \cap U_\alpha)$ 
of $g_{\alpha} \colon H_\alpha \cap U_\alpha \longrightarrow \CC^d$ 
at the point $p_{\alpha} \in Y\cap  H_\alpha \cap U_\alpha \subset 
H_\alpha \cap U_\alpha$ (in the direction $a$) defined by \rm{(i)} we have 
\begin{equation}
m_\alpha =(-1)^{\dim Y-\dim Y_\alpha}\left\{ \chi(M_{Y\cap 
U_\alpha, p_\alpha})-\chi(M_{Y\cap H_\alpha \cap U_\alpha, p_\alpha})\right\}.
\end{equation}
\end{enumerate}
\end{theorem}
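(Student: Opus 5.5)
The plan is to reduce Theorem \ref{thm-coef} to Theorem \ref{thm-mult}, applied to the restricted map $f_\alpha \coloneq f|_{U_\alpha}\colon U_\alpha \longrightarrow \CC^d$ on the normal slice $W_\alpha \simeq \CC^{n-\dim Y_\alpha}$ with the point $p_\alpha$ in place of the origin. Three things must be checked before invoking it. First, $Y\cap U_\alpha=f_\alpha^{-1}(0)$ is a C.I. of codimension $d$ in $U_\alpha$; this holds because $W_\alpha$ is transversal to $Y_\alpha$ and we can shrink $U_\alpha$. Second, $f_\alpha$ satisfies the Thom condition in the direction $a$ for the induced stratification $\{S^\prime\cap U_\alpha\}$, with $\{p_\alpha\}$ as a zero dimensional stratum. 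Third, $G_\alpha$ is exactly the nearby cycle perverse sheaf attached to $f_\alpha$. The third point is the isomorphism $G_S\simeq \psi_t(\CC_{\{ta=f(x)\}}|_{U_S\times\CC}[n-d-\dim S])$ already noted above. With these in place, $m_\alpha$ is the multiplicity of $[T^\ast_{\{p_\alpha\}}U_\alpha]$ in the characteristic cycle of the perverse sheaf of $f_\alpha$. Theorem \ref{thm-mult}, with $n$ replaced by $n-\dim Y_\alpha$ and $H$ by $H_\alpha$, then gives both (i) and
\[
m_\alpha=(-1)^{n-\dim Y_\alpha-d}\left\{\chi(M_{Y\cap U_\alpha,p_\alpha})-\chi(M_{Y\cap H_\alpha\cap U_\alpha,p_\alpha})\right\}.
\]
Since $\dim Y=n-d$, this is the claimed sign. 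For (i), I would simply repeat the argument of Theorem \ref{thm-mult} (i), using the genericity of $H_\alpha$ to get $S^\prime\pitchfork H_\alpha$ for the strata $S^\prime\neq\{p_\alpha\}$ of $Y\cap U_\alpha$.

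The main step is the Thom condition for $f_\alpha$. Take $p_i\in \CC^\ast U$ and $q_i\in f_\alpha^{-1}(p_i)\subset U_\alpha$ converging to $q\in S^\prime\cap U_\alpha$, with $T_{q_i}(f_\alpha^{-1}(p_i))\to \mathbf{T}_0\subset T_qW_\alpha$. Pass to a subsequence so that $T_{q_i}f^{-1}(p_i)\to\mathbf{T}$. The Thom condition for $f$ gives $\mathbf{T}\supset T_qS^\prime$. By Whitney (a), $T_qS^\prime$ contains the limits of tangent spaces of $Y_\alpha$ near $p_\alpha$, so $T_qS^\prime+T_qW_\alpha=T_qX$ after shrinking $U_\alpha$. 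Hence $\mathbf{T}+T_qW_\alpha=T_qX$, and so $\mathbf{T}_0=\mathbf{T}\cap T_qW_\alpha\supset T_q(S^\prime\cap W_\alpha)$. This is the same linear algebra as in the proof of Theorem \ref{thm-mult} (i), with $H$ replaced by $W_\alpha$.

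One subtlety is that $a$ must avoid $C_{\{0\}}(\Delta_{f_\alpha})$, since smoothness of the fibres $f_\alpha^{-1}(p)$ near $p_\alpha$ is needed. I would obtain it as follows. The fibres $f^{-1}(p)$, $p\in\CC^\ast U$ small, are smooth, and the Thom condition above makes them transversal to $W_\alpha$ near $p_\alpha$. Hence $f_\alpha^{-1}(p)=f^{-1}(p)\cap U_\alpha$ is smooth, i.e.\ $\CC^\ast U\cap D_\eta^\ast$ misses $\Delta_{f_\alpha}$. I expect this transversality (uniform in $p$ as $p\to0$), together with the identification of $G_\alpha$ with the slice nearby cycles, to be the main obstacle. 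The first thing I would try is a contradiction argument with sequences, exactly as in the transversality argument inside the proof of Theorem \ref{thm-mult} (ii). After that, the non-characteristic deformation and Morse-counting part of Theorem \ref{thm-mult} (ii) transfers verbatim to $U_\alpha$ and $H_\alpha\cap U_\alpha$.
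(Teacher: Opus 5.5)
Your proposal is correct and follows the paper's route: the paper's proof also just notes that $Y\cap U_\alpha$ is a C.I., that $f_\alpha=f|_{U_\alpha}$ inherits the Thom condition in the direction $a$ for the stratification induced by $\mathcal{S}_0$, and then reruns the proof of Theorem \ref{thm-mult} on the slice. Your additions fill in details the paper leaves implicit: the transversality $S'\pitchfork W_\alpha$ near $p_\alpha$, which gives $\mathbf{T}_0=\mathbf{T}\cap T_qW_\alpha$; the check that $a\notin C_{\{0\}}(\Delta_{f_\alpha})$; and the sign identity $(-1)^{n-\dim Y_\alpha-d}=(-1)^{\dim Y-\dim Y_\alpha}$. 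One wording fix: Whitney (a) should say that limits of $T_{q}S'$ as $q\to p_\alpha$ contain $T_{p_\alpha}Y_\alpha$, not the other way round.
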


\begin{proof}
By our assumptions, $Y\cap U_\alpha \subset U_\alpha$ is a C.I. and 
the restriction $f_\alpha \coloneq f|_{U_\alpha} \colon U_\alpha 
\longrightarrow \CC^d$ of 
$f\colon X\longrightarrow \CC^d$ satisfies the Thom condition in 
the direction $a\in \CC^d \setminus C_{\{0\}}(\Delta_f)$ 
with respect to the natural Whitney stratification of $Y\cap 
U_\alpha$ induced by $\mathcal{S}_0$. 
Then we obtain the assertions as in the proof of Theorem \ref{thm-mult}. 
\end{proof}

\begin{theorem}
In the situation as above, assume that $a\in \CC^d \setminus C_{\{0\}}(\Delta_g)$ 
and for any $\alpha \in A$ such that $\alpha \neq \alpha_0$ 
and $\dim Y_\alpha < \dim Y$ the condition in Theorem \ref{thm-coef} is satisfied. 
Then we have an equality 
\begin{align}
\mathrm{Eu}_Y(0)=&\chi(M_{Y\cap H, 0}) \\
&-\sum_{\substack{\alpha \neq \alpha_0, 0\in \overline{Y_\alpha} \\ \dim Y_\alpha <\dim Y}}
\left\{\chi(M_{Y\cap U_\alpha, p_\alpha})-\chi(M_{Y\cap H_\alpha \cap U_\alpha, p_\alpha})
\right\} \cdot \mathrm{Eu}_{\overline{Y_\alpha}}(0).
\end{align}
\end{theorem}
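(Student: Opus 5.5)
The plan is to copy the proof of Theorem \ref{thm-main} in Section \ref{sec-hypersurface}, replacing the nearby cycle sheaf of a single function by the perverse sheaf $G\simeq \psi_t(\CC_{\{ta=f(x)\}}[n-d])$ on $X\simeq X\times\{a\}$ obtained from the Verdier specialization $\mathcal{G}$.

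\textbf{Step 1: local Euler characteristic of $G$.} By the Riemann--Hilbert correspondence, take a regular holonomic $\SD_X$-module $\SM$ with $\Sol_X(\SM)[n]\simeq G[d]$. With this normalization the shift is $[n-d]=[\dim Y]$, exactly as in Section \ref{sec-hypersurface}. Proposition \ref{prop-milfib} gives $H^jG_0\simeq H^{j+n-d}(M_{Y,0};\CC)$. Hence
$$\chi(\Sol_X(\SM))(0)=(-1)^{\dim X-\dim Y}\chi(M_{Y,0}).$$

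\textbf{Step 2: the characteristic cycle.} By Proposition \ref{prop-adapt} and Lemma \ref{lem-perversity}, $\msupp(G)\subset\bigsqcup_{S\in\mathcal{S}_0}T^\ast_SX$. On the open stratum $Y_{\mathrm{reg}}$ (using Lemma \ref{lem-hironaka}), $f$ is a submersion near $Y$ and $G\simeq\CC_Y[\dim Y]$ there. So the coefficient of $[T^\ast_{Y_{\mathrm{reg}}}X]$ is $(-1)^{\dim Y}$, i.e.\ multiplicity one; for irreducible components not passing through $0$ nothing contributes. By Proposition \ref{prop-mult}, the multiplicities of $\SM$ along $T^\ast_{\{0\}}X$ and along $T^\ast_{Y_\alpha}X$ are $m_0$ and $m_\alpha$, as identified above via normal slices. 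Kashiwara's index theorem then gives
$$\chi(\Sol_X(\SM))(0)=(-1)^{n-\dim Y}\mathrm{Eu}_Y(0)+\sum_{\alpha}(-1)^{n-\dim Y_\alpha}m_\alpha\,\mathrm{Eu}_{\overline{Y_\alpha}}(0)+(-1)^n m_0.$$
Here the sum runs over $\alpha\neq\alpha_0$ with $0\in\overline{Y_\alpha}$ and $\dim Y_\alpha<\dim Y$, and we use $\mathrm{Eu}_Y=\sum\mathrm{Eu}_{Y_i}$ over irreducible components.

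\textbf{Step 3: substitution.} Insert $m_0=(-1)^{n-d}\{\chi(M_{Y,0})-\chi(M_{Y\cap H,0})\}$ from Theorem \ref{thm-mult}(ii), which applies because $a\notin C_{\{0\}}(\Delta_g)$, and the formula for $m_\alpha$ from Theorem \ref{thm-coef}(ii). Since $\dim Y=n-d$, the term $(-1)^n m_0$ equals $(-1)^{d}\{\chi(M_{Y,0})-\chi(M_{Y\cap H,0})\}$. Each $\alpha$-term becomes $(-1)^{d}\{\chi(M_{Y\cap U_\alpha,p_\alpha})-\chi(M_{Y\cap H_\alpha\cap U_\alpha,p_\alpha})\}\mathrm{Eu}_{\overline{Y_\alpha}}(0)$, and the first term is $(-1)^d\mathrm{Eu}_Y(0)$. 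The $\chi(M_{Y,0})$ terms cancel against Step 1. Multiplying through by $(-1)^d$ yields the stated formula.

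\textbf{Main obstacle.} I expect the delicate points to be the sign and shift bookkeeping, and justifying that the coefficient of $T^\ast_{Y_\alpha}X$ in $\CCyc(G)$ equals $(-1)^{\dim Y_\alpha}m_\alpha$. The latter requires the localized-category argument of Section \ref{sec-hypersurface}, transplanted to $G_\alpha\simeq\psi_t(\CC_{\{ta=f\}}|_{U_\alpha\times\CC})$. I would check the signs on the isolated-singularity corollary, where the sum is empty and the formula must reduce to $\mathrm{Eu}_Y(0)=\chi(M_{Y\cap H,0})$.
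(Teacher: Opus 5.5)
Your proposal is correct and follows the paper's proof essentially verbatim. You use Riemann--Hilbert plus Proposition \ref{prop-milfib} to get $\chi(\Sol_X(\SM))(0)=(-1)^{\dim X-\dim Y}\chi(M_{Y,0})$, then Kashiwara's index theorem with the multiplicities $m_0$ and $m_\alpha$, then substitute Theorems \ref{thm-mult} and \ref{thm-coef}; your sign bookkeeping checks out, and you justify the multiplicity one along $Y_{\mathrm{reg}}$, which the paper leaves implicit.

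The only slip is the normalization in Step 1. It should read $\Sol_X(\SM)[n]\simeq G$, not $G[d]$: the latter is not perverse, so no regular holonomic module realizes it, and your Euler characteristic computation already uses $\Sol_X(\SM)[n]\simeq G$.
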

\begin{proof}
By the Riemann-Hilbert correspondence there exists a regular holonomic 
$\SD_X$-module $\SM$ on $X=\CC^n$ such that 
\begin{equation}
\Sol_X(\SM)[n]\simeq G =\psi_t(\CC_{\{ta=f(x)\}}[n-d]).
\end{equation}
Then by $n-d=\dim Y$ and Proposition \ref{prop-milfib} we obtain an equality 
\begin{equation}\label{eq-chisol}
\chi(\Sol_X(\SM))(0)=(-1)^{\dim X-\dim Y}\chi(M_{Y,0}).
\end{equation}
For $\alpha \in A$ let $\mathrm{mult}_{T_{Y_\alpha}^\ast X}(\SM) \geq 0$ be the multiplicity 
of $\SM$ along the Lagrangian submanifold $T_{Y_\alpha}^\ast X \subset T^\ast X$. 
Note that for any $\alpha \in A$ such that $\alpha \neq \alpha_0$ and 
$\dim Y_\alpha <\dim Y$ we have $m_\alpha =\mathrm{mult}_{T_{Y_\alpha}^\ast X}(\SM)$. 
Moreover we have $m_0 =\mathrm{mult}_{T_{\{0\}}^\ast X}(\SM)$. 
In this situation, Kashiwara's index theorem in \cite[Chapter 6]{Kas83a} implies that 
\begin{align}
&\chi(\Sol_X(\SM))(0) = (-1)^{\dim X -\dim Y} \mathrm{Eu}_Y(0) \\
&+\sum_{\alpha \neq \alpha_0, \dim Y_\alpha < \dim Y}(-1)^{\dim X 
-\dim Y_\alpha}m_\alpha \cdot \mathrm{Eu}_{\overline{Y_\alpha}}(0) 
+(-1)^n m_0\cdot \mathbf{1}_{\{0\}}.
\end{align}
Then by Theorems \ref{thm-mult} and \ref{thm-coef} and \eqref{eq-chisol} 
the assertion immediately follows.
\end{proof}

\appendix 
\section{A construction of open dense subsets 
$\Omega\subset T_{\{0\}}^\ast X$}\label{ap-A}
In this appendix, we explain how to construct an open dense subset 
$\Omega\subset T_{\{0\}}^\ast X$ which can be used to prove the 
results in Section \ref{sec-hypersurface}. 
To obtain such a subanalytic one $\Omega$, it suffices to recall the proof of 
\cite[Theorem 4.7]{FKT26}. 
Indeed, by \cite[Lemma 5.3]{FKT26} the complex nearby cycle sheaf 
$G=\psi_t({i_f}_\ast (\CC_X[n]))[-1] \in \Perv(\CC_X)$ on $X$ can be obtained by 
the real nearby cycle functor in \cite{FKT26} as follows. 
Let $\RR\coloneq \{\tau \in \CC \mid \mathrm{Im} \tau =0\} \subset \CC$ be the 
real part of $\CC$ and $u\coloneq  \mathrm{Re} t|_{X\times \RR} \colon X\times \RR 
\longrightarrow \RR$ the restriction of the projection 
$ \mathrm{Re} t\colon X\times \RR \longrightarrow \RR$ to 
$X\times \RR \subset X\times \CC$. 
Then by \cite[Lemma 5.3]{FKT26} there exists an isomorphism 
\begin{equation}
G\simeq \psi_u^\RR \left( {i_f}_\ast (\CC_X[n])|_{X\times \RR}\right) [-1]
\end{equation}
and we can apply the arguments in the proof of \cite[Theorem 4.7]{FKT26} to describe 
the characteristic cycle $\CCyc(G)$ of $G$. 
Note that the conormal bundle $T_{\{0\}}^\ast X$ is a union of some strata 
$\Theta_\alpha$ in the proof of \cite[Theorem 4.7]{FKT26}. 
Then it suffices to define $\Omega \subset T_{\{0\}}^\ast X$ to be 
the union of such strata $\Theta_\alpha \subset T_{\{0\}}^\ast X$ satisfying the condition 
$\dim_\RR \Theta_\alpha =\dim_\RR T_{\{0\}}^\ast X$. 
We can also construct a $\CC$-analytic open dense subset $\Omega 
\subset T_{\{0\}}^\ast X$ which can be used prove the results in Section \ref{sec-hypersurface} 
in the following way. 
We denote the (complex analytic) Whitney stratification 
\begin{equation}
i_f(X)=\left( \bigsqcup_{\alpha \in A}i_f(Y_\alpha)\right) \sqcup i_f(X\setminus Y)
\end{equation}
of $\supp ({i_f}_\ast (\CC_X[n]))=i_f(X) \subset X\times \CC$ by $\mathcal{S}$ and 
set $\mathcal{S}_0\coloneq \{S\in \mathcal{S} \mid S\subset X\times \{0\}\} 
=\{Y_\alpha \times \{0\} \}_{\alpha \in A}$. 
Let $D_\e \coloneq \{\tau \in \CC \mid \abs \tau <\e \} \subset \CC \ (0<\e \ll 1)$ 
be a sufficiently small disc in $\CC$ centered at the origin $0\in \CC$ and 
$T^\ast (X\times D_\e / D_\e) \simeq (T^\ast X) \times D_\e$ the relative cotangent 
bundle associated to the projection $X\times D_\e \longrightarrow D_\e$. 
For $a \in D_\e$ we set $X_a \coloneq t^{-1}(a)= 
X\times \{a\} \subset X\times \CC$. 
Then, shrinking $X$ if necessary, we may assume that for any $a\in D_\e^\ast \coloneq 
D_\e \setminus \{0\}$ the smooth complex hypersurface $X_a \subset X\times \CC$ 
intersects any stratum $S\in \mathcal{S} \setminus \mathcal{S}_0$ 
in $\mathcal{S} \setminus \mathcal{S}_0$ transversally. 
In fact, $i_f(X\setminus Y) \in \mathcal{S}\setminus \mathcal{S}_0$ is 
the unique stratum in $\mathcal{S}\setminus \mathcal{S}_0$. 
We define an $\RR$-constructible sheaf $K\in \BDC_{\RR-c}(X\times (-\e,\e))$ 
on $X\times (-\e,\e) =(X\times \RR) \cap (X\times D_\e)$ by 
\begin{equation}
K\coloneq \rsect_{X\times (0,\e)}\left( 
{i_f}_\ast (\CC_X[n])|_{X\times (-\e,\e)} \right)[-1] 
\end{equation}
so that we have $K|_{X_0}\simeq G$. 
Then the micro-support $\msupp(K)$ of $K$ is contained in the set 
\begin{equation}
\bigsqcup_{S \in \mathcal{S}\setminus \mathcal{S}_0} 
T^*_{S \cap (X\times (-\e,\e))}(X\times (-\e,\e) ) =
T^*_{i_f(X\setminus Y) \cap (X\times (-\e,\e))}(X\times (-\e,\e) ) 
\quad \subset T^*(X\times (-\e,\e)). 
\end{equation}
over the open subset $X\times (0,\e)
\subset X\times (-\e,\e)$. 
Indeed, there exists an isomorphism 
\begin{equation}
K|_{X\times (0,\e)}\simeq {i_f}_\ast (\CC_X[n])|_{X\times (0,\e)}[-1]
\end{equation}
and the unique stratum $i_f(X\setminus Y) \in \mathcal{S}\setminus \mathcal{S}_0$ 
in $\mathcal{S}\setminus \mathcal{S}_0$ intersects the real hypersuface 
$X\times (-\e,\e) \subset X\times \CC$ transversally. 
For $a\in D_\e$ we set 
\begin{equation}
\Xi_a \coloneq 
\begin{cases}
\bigsqcup_{S\in \mathcal{S}}T_{S\cap X_a} ^\ast (X_a) \quad (a\neq 0), \\
\\
\bigsqcup_{S\in \mathcal{S}_0}T_S^\ast (X_0) \quad (a=0).
\end{cases}
\end{equation}
We define a subset $\Xi \subset T^\ast (X\times D_\e /D_\e)$ by 
$\Xi \coloneq \bigsqcup_{a\in D_\e} \Xi_a \times \{a\}$, 
where we used the natural identifications 
$T^\ast X_a \simeq T^\ast X \ (a\in D_\e)$. 
Then as in the proof of \cite[Theorem 4.7]{FKT26} we see that $\Xi$ is a closed 
subset of $T^\ast (X\times D_\e/D_\e)$. 
Moreover, as in \cite[Appendix B]{FKT26} we can show that 
$\Xi$ is an analytic subset of $T^\ast (X\times D_\e/D_\e)$. 
Indeed, it suffices to show that it is (complex) constructible. 
Then replacing the sphere bundles in \cite[Appendix B]{FKT26} by 
complex projective bundles, we can apply Lemma \ref{lem-const} to prove 
the (complex) constructibility of $\Xi \subset T^\ast (X\times D_\e/D_\e)$. 
Let $\mathcal{Z} \subset 
T^\ast (X\times D_\e/D_\e)$ be the zero section of $T^\ast (X\times D_\e/D_\e)$ 
and set $\Xi^* \coloneq \Xi \setminus 
\mathcal{Z}$. Let $\mathcal{L}$ be a (complex analytic) Whitney stratification of 
the $\CC$-analytic subset $\Xi^* \subset T^\ast (X\times D_\e/D_\e)$ such 
that for any $S\in \mathcal{S}_0$ the conormal bundle $T_S^\ast X_0 
\setminus T^*_{X_0}X_0 \subset \Xi^*$ with its zero section removed 
is a union of some strata in it. Since $\Xi^*$ is $\CC^*$-conic, 
we may assume that any stratum in $\mathcal{L}$ is $\CC^*$-conic. 
Let $T^\ast(X\times (-\e,\e)/(-\e,\e))$ be the relative cotangent bundle 
associated to the 
projection $X\times (-\e,\e) \longrightarrow (-\e,\e)$. 
Then there exists an isomorphism 
\begin{equation}
(X\times (-\e,\e)) \times_{(X\times D_\e)} T^\ast (X\times D_\e /D_\e) \simto 
T^\ast (X\times (-\e,\e)/(-\e,\e))
\end{equation}
induced by the inclusion map 
$X\times (-\e,\e) =(X\times \RR) \cap (X\times D_\e) \longhookrightarrow X\times D_\e$. 
We thus can consider $T^\ast (X\times (-\e,\e)/(-\e,\e))$ as a submanifold of 
$T^\ast (X\times D_\e /D_\e)$. 
Shrinking $D_\e\subset \CC$ if necessary, we may assume that the real hypersurface 
$T^\ast (X\times (-\e,\e)/(-\e,\e)) \subset T^\ast (X\times D_\e/D_\e)$ 
intersects any stratum $S\in \mathcal{L}$ in $\mathcal{L}$ such that 
$S\subset T^\ast (X\times D_\e^\ast/ D_\e^\ast) \simeq (T^\ast X) \times D_\e^\ast$ 
transversally. 
Then we can easily show that 
\begin{equation}
\mathcal{L}^\prime \coloneq \{ S\cap T^\ast (X\times (-\e,\e)/(-\e,\e)) \ \mid \ S\in \mathcal{L} \}
\end{equation}
is a (subanalytic) stratification of $\Xi^* \cap T^\ast(X\times (-\e,\e)/(-\e,\e))$. 
Note that for any stratum $S\in \mathcal{L}$ such that $S\subset 
(T^\ast X) \times \{0\}$ we have $S\cap T^\ast(X\times (-\e,\e)/(-\e, \e)) =S$ and hence 
$S$ is also a (complex analytic) stratum in $\mathcal{L}^\prime$. 
Now we define $\Omega \subset T_{\{0\}}^\ast X=T_{\{0\}}X_0$ to be the (unique) 
such (complex analytic) stratum $S\in \mathcal{L}^\prime$ in $\mathcal{L}^\prime$ 
satisfying the conditions $S \subset T_{\{0\}}^\ast X \setminus T^*_XX$ 
and $\dim_\CC S=\dim_\CC T_{\{0\}}^\ast X$. 
Then, instead of the (subanalytic) Whitney stratification $\mathcal{L}$ in the 
proof of \cite[Theorem 4.7.]{FKT26}, 
we can use the restriction of the above one $\mathcal{L}^\prime$ to the closed subset 
$\Xi^* \cap ((T^\ast X)\times [0,\e))\subset \Xi^* \cap T^\ast (X\times 
(-\e,\e)/(-\e, \e))$ to prove \cite[Theorem 4.7]{FKT26} for the characteristic cycle 
$\CCyc(G)$ of $G\simeq 
\psi_u^\RR \left({i_f}_\ast (\CC_X[n])|_{X\times \RR} \right)[-1] \in \Perv(\CC_X)$. 
The crucial point is that by Proposition \ref{prop-adapt} the perverse 
sheaf $G=\psi_t\left({i_f}_\ast (\CC_X)[n]\right)[-1] \simeq K|_{X_0}$ is adapted to $\mathcal{S}_0$. 
Using also the above property of $K$, we can perform the arguments in the proof of 
\cite[Theorem 4.7]{FKT26} at any point of the $\CC$-analytic open dense subset 
$\Omega \subset T_{\{0\}}^\ast X$. It is clear that this construction of 
$\Omega$ is applicable also to any perverse sheaf on $X \times \CC$. 
By such $\Omega$ we thus can clarify the proof of \cite[Thoerem 5.5]{FKT26}.


\begin{bibdiv}
\begin{biblist}





\bib{BGB22}{article}{
   author = {Brasselet, J.-P.},
   author={Grulha, Jr., N. G.},
   author={B{\'i}ch, T. N. T.},
   title = {Local Euler obstruction, old and new, III},
   journal = {J. Singul.},
   volume = {25},
   year = {2022},
   pages = {90--122},
}

\bib{BLS00}{article}{
   author = {Brasselet, J.-P.},
   author={L\^e{}, D\~ung Tr\'ang},
   author={Seade, J.},
   title = {Euler obstruction and indices of vector fields},
   journal = {Topology},
   volume = {39},
   year = {2000},
   number = {6},
   pages = {1193--1208},
}

\bib{BSS09}{book}{
   author = {Brasselet, Jean-Paul},
   author={Seade, Jos\'e},
   author={Suwa, Tatsuo},
   title= {Vector fields on singular varieties},
   series = {Lecture Notes in Mathematics},
   volume = {1987},
   publisher = {Springer-Verlag, Berlin},
   year = {2009},
   pages = {xx+225},
}


\bib{BMM94}{article}{
   author={Brian\c con, Jo\"el},
   author={Maisonobe, Philippe},
   author={Merle, Michel},
   title={Localisation de syst\`emes diff\'erentiels, stratifications de
   Whitney et condition de Thom},
   journal={Invent. Math.},
   volume={117},
   date={1994},
   number={3},
   pages={531--550},
}






\bib{Bud13}{misc}{
   author={Budur, Nero},
   title={Bernstein--Sato polynomials and generalizations},
   date={2013},
   note={Lecture notes from the Summer School ``Algebra, Algorithms,
   and Algebraic Analysis,'' Rolduc Abbey, Netherlands},
   url={https://drive.google.com/file/d/1xlSTFfw6Tftp2yYMYYxl6lfcmZpDCqx2/view},
}





\bib{CMSS16}{article}{
   author = {Callejas-Bedregal, R.},
   author={Morgado, M. F. Z},
   author={Saia, M.},
   author={Seade, J.},
   label={CMSS16},
   title = {The {L}\^e-{G}reuel formula for functions on analytic spaces},
   journal = {Tohoku Math. J. (2)},
   volume= {68},
   year = {2016},
   number = {3},
   pages= {439--456},
}








\bib{Del73}{collection}{
   author={Deligne, Pierre},
   title={Le formalisme des cycles \'evanescents, in SGA7 XIII and XIV},
   series={Lecture Notes in Mathematics},
   volume={340},
   publisher={Springer-Verlag, Berlin-New York},
   date={1973},
   pages={82--115, 116--164},
}


\bib{Dim04}{book}{
   author={Dimca, Alexandru},
   title={Sheaves in topology},
   series={Universitext},
   publisher={Springer-Verlag, Berlin},
   date={2004},
   pages={xvi+236},
}

\bib{Dub78}{article}{
   author = {Dubson, A. S.},
   title = {Classes caract\'eristiques des vari\'et\'es singuli\`eres},
   journal = {C. R. Acad. Sci. Paris S\'er. A-B},
   volume = {287},
   year = {1978},
   number = {4},
   pages = {A237--A240},
}



\bib{Ern94}{article}{
   author = {Ernstr\"om, Lars},
   title = {Topological {R}adon transforms and the local {E}uler
              obstruction},
   journal = {Duke Math. J.},
   volume = {76},
   year = {1994},
   number = {1},
   pages = {1--21},
}


\bib{FKT26}{arXiv}{
    author={Fernandes, Ren},
    author={Kudomi, Kazuki},
    author={Takeuchi, Kiyoshi},
    title={Characteristic cycles of real and complex constructible sheaves, revisited},
    year={2026},
    eprint={2603.14821.}
}



\bib{Ful98}{book}{
    author = {Fulton, William},
    title = {Intersection theory},
    edition = {Second},
    publisher = {Springer-Verlag, Berlin},
    year = {1998},
    pages = {xiv+470},
}



\bib{Gaf08}{article}{
      author = {Gaffney, Terence},
      title = {Non-isolated complete intersection singularities and the
              {$A_f$} condition},
      booktitle = {Singularities {I}},
      series = {Contemp. Math.},
      volume = {474},
      pages = {85--93},
      publisher = {Amer. Math. Soc., Providence, RI},
      year = {2008},

}


\bib{GGR19}{article}{
    author={Gaffney, Terence},
    author={Grulha, Nivaldo G., Jr.},
    author={Ruas, Maria A. S.},
    title={The local Euler obstruction and topology of the stabilization of
    associated determinantal varieties},
    journal={Math. Z.},
    volume={291},
    date={2019},
    number={3-4},
    pages={905--930},
 }


\bib{Gin86}{article}{
   author={Ginsburg, V.},
   title={Characteristic varieties and vanishing cycles},
   journal={Invent. Math.},
   volume={84},
   date={1986},
   number={2},
   pages={327--402},
}


\bib{GS81}{article}{
    author={Gonz\'alez-Sprinberg, Gerardo},
    title={L'obstruction locale d'Euler et le th\'eor\`eme de MacPherson},
    book={
       series={Ast\'erisque},
       volume={82-83},
       publisher={Soc. Math. France, Paris},
    },
    date={1981},
    pages={7--32},
}

\bib{GM88}{book}{
   author={Goresky, Mark},
   author={MacPherson, Robert},
   title={Stratified Morse theory},
   series={Ergebnisse der Mathematik und ihrer Grenzgebiete (3)},
   volume={14},
   publisher={Springer-Verlag, Berlin},
   date={1988},
   pages={xiv+272},
}

\bib{Ham71}{article}{
    author = {Hamm, Helmut},
    title = {Lokale topologische {E}igenschaften komplexer {R}\"aume},
    journal = {Math. Ann.},
    volume = {191},
    year = {1971},
    pages = {235--252},
}

\bib{HL73}{article}{
   author = {Hamm, Helmut A.},
   author={L\^e{}, D\~ung Tr\'ang},
   title = {Un th\'eor\`eme de {Z}ariski du type de {L}efschetz},
   journal = {Ann. Sci. \'Ecole Norm. Sup. (4)},
   volume= {6},
   year = {1973},
   pages = {317--355},
}

\bib{HIO88}{book}{
    author = {Herrmann, M.},
    author={Ikeda, S},
    author={Orbanz, U.},
    title = {Equimultiplicity and blowing up},
    note = {An algebraic study,
              With an appendix by B.\ Moonen},
    publisher = {Springer-Verlag, Berlin},
    year = {1988},
    pages = {xviii+629},
}

\bib{Hir76}{article}{
    author = {Hironaka, Heisuke},
    title = {Stratification and flatness},
    booktitle = {Real and complex singularities ({P}roc. {N}inth {N}ordic
              {S}ummer {S}chool/{NAVF} {S}ympos. {M}ath., {O}slo, 1976)},
    pages = {199--265},
    publisher = {Sijthoff \& Noordhoff, Alphen aan den Rijn},
    year = {1977},
}




\bib{HTT08}{book}{
   author={Hotta, Ryoshi},
   author={Takeuchi, Kiyoshi},
   author={Tanisaki, Toshiyuki},
   title={$D$-modules, perverse sheaves, and representation theory},
   series={Progress in Mathematics},
   volume={236},
   publisher={Birkh\"auser Boston, Inc., Boston, MA},
   date={2008},
   pages={xii+407},
}








 \bib{Kas73}{article}{
    author={Kashiwara, Masaki},
    title={Index theorem for a maximally overdetermined system of linear
    differential equations},
    journal={Proc. Japan Acad.},
    volume={49},
    date={1973},
    pages={803--804},
 }


\bib{Kas83a}{book}{
   author={Kashiwara, Masaki},
   title={Systems of microdifferential equations},
   series={Progress in Mathematics},
   volume={34},
   publisher={Birkh\"auser Boston, Inc., Boston, MA},
   date={1983},
   pages={xv+159},
}


\bib{Kas83b}{article}{
   author={Kashiwara, Masaki},
   title={Vanishing cycle sheaves and holonomic systems of differential
   equations},
   conference={
      title={Algebraic geometry},
      address={Tokyo/Kyoto},
      date={1982},
   },
   book={
      series={Lecture Notes in Math.},
      volume={1016},
      publisher={Springer, Berlin},
   },
   date={1983},
   pages={134--142},
}




\bib{Kas85}{article}{
   author={Kashiwara, Masaki},
   title={Index theorem for constructible sheaves},
   journal={Ast\'erisque},
   number={130},
   date={1985},
   pages={193--209},
}


\bib{Kas03}{book}{
   author={Kashiwara, Masaki},
   title={$D$-modules and microlocal calculus},
   series={Translations of Mathematical Monographs},
   volume={217},
   publisher={American Mathematical Society, Providence, RI},
   date={2003},
   pages={xvi+254},
}






\bib{KS85}{article}{
   author={Kashiwara, Masaki},
   author={Schapira, Pierre},
   title={Microlocal study of sheaves},
   journal={Ast\'erisque},
   number={128},
   date={1985},
   pages={235},
}


\bib{KS90}{book}{
   author={Kashiwara, Masaki},
   author={Schapira, Pierre},
   title={Sheaves on manifolds},
   series={Grundlehren der mathematischen Wissenschaften},
   volume={292},
   publisher={Springer-Verlag, Berlin},
   date={1990},
   pages={x+512},
}















 


  

\bib{Kat78}{article}{
    author={Kato, M.},
    title={Singularities and some global topological properties},
    journal={Proceedings of R.I.M.S. Singularities Symposium},
    year={1978},
}


\bib{Le73}{article}{
   author={L\^e{}, D\~ung Tr\'ang},
   title={Calcul du nombre de cycles \'evanouissants d'une hypersurface
   complexe},
   journal={Ann. Inst. Fourier (Grenoble)},
   volume={23},
   date={1973},
   number={4},
   pages={261--270},
}

\bib{Le75}{article}{
   author = {L\^e{}, D\~ung Tr\'ang},
   title = {La monodromie n'a pas de points fixes},
   journal = {J. Fac. Sci. Univ. Tokyo Sect. IA Math.},
   volume = {22},
   year= {1975},
   number = {3},
   pages = {409--427},
}

\bib{LT81}{article}{
    author = {L\^e{} D. T.},
    author={Teissier, B.},
    label={LT81},
    title = {Vari\'et\'es polaires locales et classes de {C}hern des
              vari\'et\'es singuli\`eres},
    journal = {Ann. of Math. (2)},
    volume = {114},
    year = {1981},
    number = {3},
    pages = {457--491},
}

\bib{Loo84}{book}{
   author = {Looijenga, E. J. N.},
   title= {Isolated singular points on complete intersections},
   series = {London Mathematical Society Lecture Note Series},
   volume= {77},
   publisher = {Cambridge University Press, Cambridge},
   year= {1984},
   pages = {xi+200},
}

\bib{LR22}{article}{
   author = {L\H orincz, A. C.},
   author={Raicu, C.},
   title = {Local {E}uler obstructions for determinantal varieties},
   journal = {Topology Appl.},
   volume = {313},
   year = {2022},
   pages = {Paper No. 107984, 21},
}


\bib{Mac74}{article}{
    author={MacPherson, R. D.},
    title={Chern classes for singular algebraic varieties},
    journal={Ann. of Math. (2)},
    volume={100},
    date={1974},
    pages={423--432},
}







\bib{MT08}{article}{
   author = {Matsui, Yutaka},
   author={Takeuchi, Kiyoshi},
   title = {Topological Radon transforms and degree formulas for dual
              varieties},
   journal = {Proc. Amer. Math. Soc.},
   volume = {136},
   year = {2008},
   number = {7},
   pages = {2365--2373},
}



\bib{MT11}{article}{
    author={Matsui, Yutaka},
    author={Takeuchi, Kiyoshi},
    title={A geometric degree formula for $A$-discriminants and Euler
    obstructions of toric varieties},
    journal={Adv. Math.},
    volume={226},
    date={2011},
    number={2},
    pages={2040--2064},
}


 \bib{Mil68}{book}{
    author={Milnor, John},
    title={Singular points of complex hypersurfaces},
    series={Annals of Mathematics Studies},
    volume={No. 61},
    publisher={Princeton University Press},
    date={1968},
    pages={iii+122},
 }





\bib{Pie88}{book}{
   author = {Piene, R.},
   title = {Cycles polaires et classes de {C}hern pour les vari\'et\'es
              projectives singuli\`eres},
   BOOKTITLE = {Introduction \`a{} la th\'eorie des singularit\'es, {II}},
   series = {Travaux en Cours},
   volume = {37},
   pages = {7--34},
   publisher = {Hermann, Paris},
   year = {1988},
}

\bib{SKK}{article}{
    author = {Sato, Mikio},
    author={Kawai, Takahiro},
    author={Kashiwara, Masaki},
    label={SKK},
    title = {The theory of pseudodifferential equations in the theory of
              hyperfunctions},
    journal = {S\=ugaku},
    volume = {25},
    year = {1973},
    pages = {213--238},
}















\bib{Sch03}{book}{
   author={Sch\"urmann, J\"org},
   title={Topology of singular spaces and constructible sheaves},
   series={Mathematical Monographs (New Series)},
   volume={63},
   publisher={Birkh\"auser Verlag, Basel},
   date={2003},
   pages={x+452},
}



\bib{Sea06}{book}{
   author = {Seade, Jos\'e},
   title = {On the topology of isolated singularities in analytic spaces},
   series = {Progress in Mathematics},
   volume = {241},
   publisher = {Birkh\"auser Verlag, Basel},
   year = {2006},
   pages = {xiv+238},
}

\bib{Sea19}{article}{
   author = {Seade, Jos\'e},
   title = {On Milnor's fibration theorem and its offspring after 50 years},
   journal = {Bull. Amer. Math. Soc. (N.S.)},
   volume = {56},
   year = {2019},
   number = {2},
   pages = {281--348},
}




\bib{Tak25}{article}{
   author={Takeuchi, Kiyoshi},
   title={Geometric monodromies, mixed Hodge numbers of motivic Milnor
   fibers and Newton polyhedra},
   conference={
      title={Handbook of geometry and topology of singularities VII},
   },
   book={
      publisher={Springer},
   },
   date={2025},
   pages={643--720},
}

\bib{Tro20}{article}{
    author = {Trotman, David},
    title = {Stratification theory},
    booktitle = {Handbook of geometry and topology of singularities. {I}},
    pages = {243--273},
    publisher = {Springer, Cham},
    year = {2020},
}

\bib{VPV10}{article}{
   author = {Van Proeyen, Lise},
   author={Veys, Willem},
   title = {The monodromy conjecture for zeta functions associated to
              ideals in dimension two},
   journal = {Ann. Inst. Fourier (Grenoble)},
   volume = {60},
   year = {2010},
   number = {4},
   pages = {1347--1362},
}

\bib{Ver83}{article}{
   author={Verdier, J.-L.},
   title={Sp\'ecialisation de faisceaux et monodromie mod\'er\'ee},
   book={
      series={Ast\'erisque},
      volume={101-102},
      publisher={Soc. Math. France, Paris},
   },
   date={1983},
   pages={332--364},
}
\end{biblist}
\end{bibdiv}
\end{document}